\newcommand{\C}{\mathbb{C}}
\newcommand{\A}{\mathbb{A}}
\newcommand{\Q}{\mathbb{Q}}
\newcommand{\Z}{\mathbb{Z}}
\newcommand{\R}{\mathbb{R}}
\newcommand{\K}{\mathbf{K}}
\newcommand{\bad}{S}
\newcommand{\dist}{d}
\newcommand{\vol}{\operatorname{vol}}
\newcommand{\sph}{\mathcal{S}_\infty}
\newcommand{\Paley}{\mathcal{P}}
\newcommand{\bdry}{\partial}
\newcommand{\Pspace}{\Paley(\aaa_{0,\C}^*)^W}
\newcommand{\PspaceR}[1]{\Paley_{#1}(\aaa_{0,\C}^*)^W}
\newcommand{\Test}{C_c^\infty(G_\infty//\K_\infty)}
\newcommand{\TestR}{C_R^\infty(G_\infty//\K_\infty)}
\newcommand{\ccg}{C_c^\infty(G(\A)^1)}
\newcommand{\modulus}{\pmb{\delta}}
\newcommand{\prms}{\mathbf{P}}
\newcommand{\maxsupp}{\mathfrak{ms}}
\renewcommand{\Re}{\operatorname{Re}}
\renewcommand{\Im}{\operatorname{Im}}
\newcommand{\noncusp}{\operatorname{ncusp}}
\newcommand{\prm}{\pmb{\lambda}}
\newcommand{\regd}{d}
\newcommand{\cfunc}{\mathbf{c}}
\newcommand{\rhosymb}{\pmb{\rho}}
\newcommand{\mult}{m}
\newcommand{\pls}{++}
\newcommand{\supp}{\operatorname{supp}}
\newcommand{\reg}{\mathfrak{d}}
\newcommand{\sa}{\R}
\newcommand{\pd}{\ge0}
\newcommand{\unrd}[1]{\widehat{#1}^{\operatorname{unr}}}
\newcommand{\disc}{\operatorname{disc}}
\newcommand{\der}{\operatorname{der}}
\newcommand{\fin}{\operatorname{fin}}
\newcommand{\Ind}{\operatorname{Ind}}
\newcommand{\Levis}{\mathcal{L}}
\newcommand{\pars}{\mathcal{P}}
\newcommand{\aaa}{\mathfrak{a}}
\newcommand{\ball}{\mathcal{B}}
\newcommand{\iii}{\mathrm{i}}
\newcommand{\one}{\mathbf{e}}
\newcommand{\nt}{\operatorname{nt}}
\newcommand{\tr}{\operatorname{tr}}
\newcommand{\mst}{T_0}
\newcommand{\AF}{\mathcal{A}}
\newcommand{\dmst}{\widehat{\mst}}
\newcommand{\dmstM}{\widehat{\mst^M}}
\newcommand{\temp}{\operatorname{temp}}
\newcommand{\tempdual}{\dmst(\C)^1}
\newcommand{\herm}{\operatorname{hm}}
\newcommand{\unt}{\operatorname{unt}}
\newcommand{\zzz}{\mathfrak{z}}
\newcommand{\ntdual}{\dmst(\C)^{\nt}}
\newcommand{\hermdual}{\dmst(\C)^{\herm}}
\newcommand{\unitdual}{\dmst(\C)^{\unt}}
\newcommand{\Hecke}{\mathcal{H}}
\newcommand{\Sat}{\mathcal{S}}
\newcommand{\invreg}{\C[\dmst]^W}
\newcommand{\bs}{\backslash}
\newcommand{\cl}{\operatorname{cl}}
\newcommand{\rest}{|}
\newcommand{\resid}{\operatorname{res}}
\newcommand{\rk}{\mathbf{r}}
\newcommand{\dm}{\mathbf{d}}
\newcommand{\cusp}{\operatorname{cusp}}
\newcommand{\GL}{\operatorname{GL}}
\newcommand{\SL}{\operatorname{SL}}
\newcommand{\pl}{\operatorname{pl}}
\newcommand{\abs}[1]{\left|{#1}\right|}
\newcommand{\norm}[1]{\lVert#1\rVert}
\newcommand{\mxml}{\mathcal{M}}
\newcommand{\ortho}{\mathcal{OB}}
\newcommand{\sprod}[2]{\left\langle{#1},{#2}\right\rangle}
\newtheorem{theorem}{Theorem}[section]
\newtheorem{lemma}[theorem]{Lemma}
\newtheorem{proposition}[theorem]{Proposition}
\newtheorem{corollary}[theorem]{Corollary}
\newtheorem{definition}[theorem]{Definition}
\theoremstyle{remark}
\newtheorem{remark}[theorem]{Remark}
\begin{document}

\title[Weyl law for congruence subgroups]{On the remainder term of the {W}eyl law for congruence subgroups of Chevalley groups}
\author{Tobias Finis}
\address{Universit\"at Leipzig, Mathematisches Institut, PF 10 09 20, D-04009 Leipzig, Germany}
\email{finis@math.uni-leipzig.de}
\author{Erez Lapid}
\address{Department of Mathematics, Weizmann Institute of Science, Rehovot 7610001, Israel}
\email{erez.m.lapid@gmail.com}


\begin{abstract}
Let $X$ be a locally symmetric space defined by a simple Chevalley group $G$ and a congruence subgroup of $G(\Q)$.
In this generality, the Weyl law for $X$ was proved by Lindenstrauss--Venkatesh.
In the case where $G$ is simply connected, we sharpen their result by giving a power saving estimate for the remainder term.
\end{abstract}

\maketitle

\setcounter{tocdepth}{1}
\tableofcontents

\section{Introduction}

\subsection{}
The Weyl law, in its basic form, states that the number $N_X(T)$ of eigenfunctions of the Laplacian
on a compact $d$-dimensional Riemannian manifold $X$ with eigenvalues $\le T^2$ satisfies the asymptotic
\[
\lim_{T \to \infty} \frac{N_X (T)}{T^d} = \frac{\vol (X)}{(4\pi)^{d/2} \Gamma (\frac{d}{2} + 1)}.
\]
The definitive result for compact Riemannian manifolds is due to
H\"ormander \cite{MR0609014}. His work implies in particular that
\[
N_X (T) = \frac{\vol (X)}{(4\pi)^{d/2} \Gamma (\frac{d}{2} + 1)} T^d + O(T^{d-1})
\]
in the compact case. The order of the remainder term is optimal without further assumptions on $X$.

The problem becomes more difficult when $X$ is not compact (but still of finite volume).
An interesting class to consider is the locally symmetric spaces of non-compact type, namely $X=\Gamma\bs G/K$,
where $\Gamma$ is a lattice in a semisimple Lie group $G$ with a maximal compact subgroup $K$.
One of Selberg's main motivations for developing the trace formula was to obtain
information on the eigenvalues of the Laplacian.
Already the existence of non-zero eigenvalues is non-evident.
For $G = \SL (2, \R)$, the case of hyperbolic surfaces, Selberg \cite{MR1159119} (see also
\cite{MR1117906}*{\S39}) showed that
\[
N_{\Gamma} (T) + M_{\Gamma} (T) = \frac{\vol (X)}{4\pi} T^2 -
\frac{\kappa}{\pi} T \log\frac{2T}{e} + O (T / \log T),
\]
where $N_{\Gamma} (T) := N_{\Gamma\bs G/K} (T)$, $\kappa$ is the number of cusps of $\Gamma\bs G/K$, and
\[
M_{\Gamma} (T) = - \frac{1}{2\pi} \int_0^T \frac{\phi'}{\phi} (\frac12 + it) dt
\]
is the winding number of the determinant
$\phi (s)$ of the scattering matrix, a meromorphic function of a complex variable $s$ that is holomorphic and has absolute value $1$ on the line $\Re s = \frac12$.
The summand $M_{\Gamma} (T)$ can be interpreted as the
contribution of the part of the continuous spectrum with Laplace eigenvalue $\le\frac14+T^2$.
It is not difficult to see that the difference between $M_{\Gamma} (T)$ and the number of poles of $\phi$ with imaginary part between $0$ and $T$, counted with their multiplicities, is $O(T)$ (and Selberg refined this to $c T + O (T / \log T)$ for a constant $c \ge 0$). Selberg also showed that
\[
\liminf_{T \to \infty} \frac{M_{\Gamma} (T)}{T \log T}
\ge \frac{\kappa}{\pi}.
\]
If $\Gamma$ is a congruence subgroup of the modular group $\SL(2,\Z)$,
then the scattering determinant $\phi(s)$ is given in terms of Dirichlet $L$-functions,
and the classical results of Riemann and von Mangoldt on the number of zeros of such an $L$-function in a strip yield the asymptotic
\[
M_{\Gamma} (T) = \frac{\kappa}{\pi} T \log T + O (T).
\]
Therefore, in this case we have
\[
N_{\Gamma} (T) = \frac{\vol (X)}{4\pi} T^2 -
\frac{2\kappa}{\pi} T \log T + O (T),
\]
and it is possible to refine the remainder term $O(T)$ here to
$c' T + O (T / \log T)$ for a suitable constant $c'$ depending on $\Gamma$.

This result led Selberg to speculate whether in general $M_{\Gamma} (T) = o(T^2)$, which would imply
the Weyl law for the discrete spectrum for an arbitrary lattice
$\Gamma$ in $\SL(2,\R)$.
However, the subsequent work of Phillips and Sarnak on the dissolution of cusp forms under deformation
of lattices led Sarnak to conjecture
that the opposite extreme holds, namely that except for
the Teichm\"uller space of the once punctured torus, the discrete spectrum of
a generic non-uniform lattice in $\SL(2,\R)$ is finite
(see \cite{MR1997348} and the references therein).

It is well known that irreducible lattices in all other semisimple Lie groups do not form continuous families.
It was conjectured by Sarnak \cite{MR1159118} that the Weyl law holds in complete generality in the case of congruence subgroups.
The appropriate form of the trace formula for congruence subgroups in arbitrary rank was developed by Arthur in the adelic language.
It is technically much more complicated than in the rank one case.

\subsection{}
An important part of the discrete spectrum is the cuspidal spectrum defined by the vanishing of all constant terms with respect to proper parabolic subgroups.
In rank one, all but a finite part of the discrete spectrum is cuspidal, but this is not true in general.
The general upper bound
\[
\limsup_{T \to \infty} \frac{N_{X,\cusp} (T)}{T^d}  \le \frac{\vol (X)}{(4\pi)^{d/2} \Gamma (\frac{d}{2} + 1)}
\]
on the cuspidal spectrum was proven by Donnelly \cite{MR664496} (it can also be obtained from a basic analysis of Arthur's trace formula).
In \cite{MR2306657}, Lindenstrauss and Venkatesh made a breakthrough by showing that the cuspidal spectrum
for congruence subgroups obeys the Weyl law.\footnote{They state their
result for quotients $\Gamma \bs G (\R) / K$, where $G$ is a split adjoint semisimple group over $\Q$ and $\Gamma$ a congruence subgroup of
$G (\Q)$, but their method is completely general.}
(Some special cases had been proved earlier -- see \cites{MR1204788, MR1823867, MR2276771}.)
Their argument uses arithmeticity in an essential way. It is based on the crucial fact that the
spectral parameters of Eisenstein series at different places are not independent of one another, but
satisfy certain simple relations.
Taking this into account, Lindenstrauss and Venkatesh constructed from any single non-trivial Hecke operator a family of adelic test functions
that act trivially on all Eisenstein series, and hence effectively only see the cuspidal spectrum.
In contrast to previous instances of the simple trace formula these test functions are not factorizable,
and as a family, do not entail a loss of a positive proportion of the cuspidal spectrum.

The result of \cite{MR2306657} does not provide a bound on the remainder term in the Weyl law for the cuspidal spectrum.
In fact, it seems that as it stands, the method is short of giving a good error term, since it uses a single Hecke operator.
The purpose of the current paper is to push the basic idea of Lindenstrauss--Venkatesh further and to bound
the error term in the cuspidal Weyl law by $O(T^{d-\delta})$ for some $\delta>0$.
For simplicity, we work in the setting of simple Chevalley groups $G$ defined over $\Q$, mainly because the necessary estimates
for the geometric side of the trace formula have been worked out only in this case.
Then, we have the following (see Theorem \ref{TheoremWeyl} and Corollary \ref{cor: stdweylaw}).\footnote{In the
non simply connected case, our result is for manifolds which may be non-connected.}

\begin{theorem} \label{thm: basicweylaw}
Let $G$ be a simply connected, simple Chevalley group. Then there exists $\delta>0$ such that for any congruence subgroup
$\Gamma$ of $G(\Z)$ we have
\[
N_{X,\cusp}(T) = \frac{\vol (X)}{(4\pi)^{d/2} \Gamma (\frac{d}{2} + 1)} T^d + O_{\Gamma}(T^{d-\delta}),\ \ T\ge1
\]
where $X=\Gamma\bs G(\R)/K$.
\end{theorem}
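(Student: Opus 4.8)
The plan is to work adelically via Arthur's trace formula, following the Lindenstrauss--Venkatesh strategy but upgrading their single Hecke operator to a family parametrized by a real parameter $R$ that is coupled to the archimedean parameter $T$. Concretely, for a suitable test function $h_T \in \Test$ whose spherical transform concentrates on the ball of radius $T$ (built from a fixed bump function rescaled, so that $\widehat{h_T}$ approximates the characteristic function of $\{\nu : \norm{\nu} \le T\}$ up to an acceptable smoothing of width $T^{1-\epsilon}$), one takes an adelic test function of the form $f = h_T \otimes \tau_R$, where $\tau_R \in \ccg$ is a linear combination of Hecke operators supported on group elements of norm $\le R$, chosen — exactly as in \cite{MR2306657} — so that its action on every Eisenstein series induced from a proper parabolic vanishes identically, while its action on the trivial representation (and on the cuspidal spectrum) is controlled from below. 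The key arithmetic input, again from \cite{MR2306657}, is that the Satake parameters of a residual or continuous automorphic representation at the finite places are constrained by the archimedean parameter, so that a polynomial (in the Satake variables) vanishing on the relevant locus kills the entire non-cuspidal contribution to the spectral side.

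The main steps, in order, are as follows. First, I would record the spectral expansion of $\tr(f \mid L^2_{\cusp})$ and show that, with the above choice of $\tau_R$, the full spectral side of Arthur's trace formula reduces — up to an error absorbed into the geometric estimates — to the cuspidal term plus the identity contribution, the latter producing the main term $\vol(X)(4\pi)^{-d/2}\Gamma(\tfrac d2+1)^{-1}T^d$ after unfolding the Plancherel density at the archimedean place. Second, on the geometric side, I would invoke the now-available uniform bounds for the geometric terms of the trace formula for Chevalley groups (the unipotent and more general weighted orbital integrals), estimating the contribution of each conjugacy class of $G(\Q)$ by the volume of its intersection with the support of $f$; here the support in the archimedean variable has radius $O(\log T)$ (since $\widehat{h_T}$ Paley--Wiener of exponential type $\asymp T$ forces $\supp h_T$ of radius $\asymp$ a fixed constant, but after the rescaling the relevant count is polynomial in $T$) and the finite support has radius $R$, so the off-identity geometric contribution is bounded by $R^{A} T^{d-1}(\log T)^{B}$ for explicit $A, B$. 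Third, I would bound the smoothing error incurred by replacing the sharp counting function $N_{X,\cusp}(T)$ by the smoothed quantity $\tr(h_T \otimes \tau_R \mid L^2_{\cusp})$: this costs $O(T^{d-1+\epsilon})$ from the width of the mollifier, multiplied by the operator norm of $\tau_R$ on the cuspidal spectrum, which by the choice of $\tau_R$ and positivity can be taken to be $O(R^{C})$; crucially, $\tau_R$ does not annihilate a positive proportion of cusp forms, so dividing through by its trace on the identity (of size $\asymp R^{\dim}$, say) is harmless. Finally, optimizing: one balances the geometric error $R^{A}T^{d-1+\epsilon}$ against the ``loss'' incurred because $\tau_R$ only captures cusp forms with bounded Hecke eigenvalue ratios, which forces $R$ to grow like a small power $T^{\eta}$; choosing $\eta$ small enough that $A\eta < 1$ yields $\delta = 1 - A\eta - \epsilon > 0$, and Theorem \ref{thm: basicweylaw} follows, with the non-simply-connected refinement handled by the component-group bookkeeping alluded to in the footnote.

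The hard part, and the real content beyond \cite{MR2306657}, is the third step together with the $R$-dependence tracking throughout: one must show that the family $\{\tau_R\}$ can be arranged so that its action on the cuspidal spectrum has polynomially bounded operator norm \emph{and} that the resulting weighting of cusp forms, after normalization, is bounded above and below by absolute constants on all but a negligible (power-saving) portion of the spectrum below $T$. This is delicate because the Hecke operators that kill Eisenstein series are not positive, so one cannot simply take $\tau_R = \sigma_R^* \sigma_R$; instead one needs the Kloosterman/Bruhat-cell analysis of \cite{MR2306657} showing that the relevant polynomial identity among Satake parameters holds, combined with a quantitative spectral-gap-type estimate ensuring that the resulting linear combination is comparable to a positive operator on the cuspidal part. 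Propagating the explicit polynomial dependence on $R$ (and hence on $T$ after the coupling) through the intertwining operators appearing in the non-cuspidal spectral terms, and checking that these too are annihilated uniformly rather than merely pointwise, is the technical heart of the argument; everything else is a careful but routine combination of Donnelly's upper bound technique, Arthur's trace formula estimates, and Paley--Wiener theory on $\Test$.
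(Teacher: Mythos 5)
Your proposal stays within the Lindenstrauss--Venkatesh paradigm of \emph{annihilating} the Eisenstein contribution by a Hecke operator $\tau_R$ that vanishes identically on every $I_P(\pi,\lambda)$, and then tries to upgrade this to a power saving by letting $R$ grow with $T$. The paper explicitly abandons this paradigm. It instead \emph{amplifies} the non-cuspidal contribution: by Stone--Weierstrass and Macdonald's formula (Proposition \ref{prop: tech}), one builds for each prime $p$ a self-adjoint $\tau_{U,p}\in\Hecke_p$ with $\tau_{U,p}(e)=0$, $\norm{\tau_{U,p}}_1\le Bp^A$, $\norm{\tau_{U,p}}_2\le B$, and $\Sat_p\tau_{U,p}\ge 1$ on $\hermdual\setminus U$. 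Summing over $\sim X/\log X$ primes $p\equiv 1\pmod N$ (the congruence condition coming from Lemma \ref{lem: diric}), and squaring, one obtains $\theta*\theta$ whose spectral action on every Eisenstein or residual constituent is $\ge Y^2$; the relation between archimedean and $p$-adic parameters enters through Lemma \ref{lem: elem}, which lets one choose the ``right'' $U_j$ for each $p$ once $\mu$ is fixed, rather than through any Kloosterman or Bruhat-cell computation.

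This distinction is not cosmetic: it is exactly what removes the obstacle you yourself flag as ``delicate'' and ``the real content''. If $\tau_R$ kills Eisenstein series and you feed $f=h_T\otimes\tau_R$ into the trace formula, the spectral side is a \emph{weighted} trace over cusp forms with weights $\hat\tau_R(\pi_{\fin})$ that you cannot normalize away: they are neither positive nor uniformly comparable to $\tau_R(e)$ across the spectrum, and ``dividing through by its trace on the identity'' as you suggest has no spectral meaning. The paper sidesteps this entirely. The quantity $\tr R_{\cusp}(f_\mu\otimes\one_K)$ is estimated with the \emph{trivial} Hecke operator; the amplified test function $f_\mu*f_\mu^*\otimes\one_K\otimes\theta*\theta$ is used only to bound the measure $\nu_{\noncusp}^{T,K}(W\ball_1(\mu))$. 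Positivity of $J_P^T(f*f^*)$ gives $Y^2\nu_P^{T,K}(W\ball_R(\mu))\le J^T(F_{\mu,X})$ and the geometric side, by the Finis--Matz estimate, is $\ll \tilde\beta(\mu)(Y + X^{2A}Y^2(1+\norm{T})^\rk/D(\mu))$, so choosing $X\sim D(\mu)^{1/(2A+1)}$ yields the power saving (Proposition \ref{prop: mubnd}). The cuspidal Hecke eigenvalues never appear on the left side of the inequality, so no spectral-gap or positivity-comparison input for $\tau_R$ on the cuspidal spectrum is needed. Without this reorganization, your plan has a genuine gap at precisely the step you identified: there is no known way to make the ``normalization of $\tau_R$ on the cuspidal spectrum'' go through, and the paper's introduction explicitly says the method of \cite{MR2306657} ``as it stands, is short of giving a good error term, since it uses a single Hecke operator.''
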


Our method uses Hecke operators as well, but in a slightly different way. Namely, instead of \emph{annulling} the contribution
of the non-cuspidal spectrum, we \emph{amplify} it in order to show its negligibility (by a factor of $O(T^{-\delta})$).
This is somewhat analogous to the situation in Selberg's sieve (e.g., \cite{MR2200366}) and requires the use of
$T^c$ many Hecke operators for a suitable $c>0$.
The argument crucially relies on a simple positivity property of Arthur's trace formula.
Apart from the basic work of Arthur's first papers on the trace formula, we use the estimates on its geometric side of \cite{1905.09078},
which are based on \cites{MR2801400,MR3534542}.
The exact power saving that we get can in principle be computed, but since it results from an application of the
Stone--Weierstrass theorem, we expect
it to be quite poor. It would be interesting to analyze this question more carefully.
The final result (Theorem \ref{TheoremWeyl}) is actually more general than the Weyl law for the Laplacian, since we treat the entire commutative algebra of all invariant differential operators on $G (\R) / K$ simultaneously, following the method of \cite{MR532745}.

More generally, our proof gives a main term for the trace of an arbitrary Hecke operator $\tau$ on the cuspidal spectrum,
with a remainder term of order $O_\tau(T^{d-\delta})$
where the implicit constant is the $L^1$-norm of $\tau$ times a logarithmic factor depending on the support
of $\tau$ (see Theorem \ref{TheoremWeylHecke} for the precise statement).
As in \cite{MR3675175}, these estimates have applications to the conjectures of Katz--Sarnak on low-lying zeros of $L$-functions
for the family of cuspidal automorphic representations of $G$ of a bounded level that are spherical at infinity.

\subsection{}
An alterative strategy (see \cites{MR2541128,MR3711830,1905.09078}) is to use Arthur's fine spectral expansion and to analyze the analytic
properties of intertwining operators (i.e., in this case, of their global normalizing factors) in more depth.

A first step in this direction is to establish polynomial upper bounds for the discrete spectrum.
This is the trace-class conjecture, which was solved by Werner M\"uller some time ago \cite{MR1025165} (see also \cites{MR1470422, MR1622604}).
A refinement of this statement is the absolute convergence of the spectral side of Arthur's trace formula, which was established in
\cite{MR2811597}.

In \cite{MR3711830}, we formulated a precise analytic conjecture on intertwining operators,
which would imply the Weyl law for the discrete spectrum with an error term of
$O(T^{d-1})$ as in the cocompact case (with an extra logarithmic factor in the case of groups of type $A_1$ and $A_2$).
It also implies that the non-cuspidal discrete spectrum is bounded by $O(T^{d-2})$ \cite{1905.09078}.
Using the work of Langlands on the relation between intertwining operators and automorphic
$L$-functions \cite{MR0419366}, the conjecture can be formulated in terms of the latter.
This conjecture is known to hold for the general linear groups, where the pertinent $L$-functions are the Rankin--Selberg convolutions,
whose analytic properties are well-understood by the work of Jacquet--Piatetski-Shapiro--Shalika and others \cite{MR2541128}.
The conjecture is also known for quasi-split classical groups using Arthur's work and for
the split exceptional group $G_2$, by Shahidi's work on the symmetric cube $L$-function for $\GL_2$ (see \cite{MR3711830}).
In general however, the required information on the behavior of the automorphic $L$-functions is not available.

In contrast, the method of the current paper does not give an upper bound on the non-cuspidal discrete spectrum,
which remains an interesting open problem in general.

\subsection{}
To close this introduction, we give a quick summary of the individual sections of this paper.
In \S \ref{SectionArthur}, we give a summary of the first stage of the spectral expansion of Arthur's trace formula.
In \S \ref{SectionPadic}, we construct for each prime $p$, Hecke operators at $p$ suitable for the task of emphasizing the non-cuspidal contribution. The main result of this section is Proposition \ref{prop: tech}, which is the technical heart of the paper.
In \S \ref{SectionArch}, we collect some mostly standard facts about spherical archimedean test functions,
the associated Paley--Wiener theorem and the spherical Plancherel measure.
Finally, in \S \ref{SectionMainResults} we prove our main results Theorem \ref{TheoremWeyl} and Theorem \ref{TheoremWeylHecke}.
Proposition \ref{prop: mubnd} contains the key part of the argument.

\section{Review of Arthur's trace formula} \label{SectionArthur}
In this section we recall the relevant facts from the basic theory of Arthur's (non-invariant) trace formula
and set some notation which will be used throughout. We will freely use standard results from the textbook \cite{MR1361168} (and by extension,
\cite{MR0579181}) as well as from Arthur's fundamental papers \cites{MR518111, MR558260, MR681737} (see also the first
part of \cite{MR2192011}).
On the other hand, we will not go into Arthur's fine spectral and geometric expansions (let alone his more advanced
theory of the trace formula) since we will not use them in the sequel.
In fact, on the geometric side, we will only use the estimates of
\cites{MR2801400,MR3534542,1905.09078} (see \S\ref{sec: Finis-Matz} below).

For this section, let $G$ be an arbitrary reductive group defined over $\Q$.

Here and henceforward, $X \ll Y$ means that there is a constant $C$ (implicitly depending on the group $G$) such that $X\le CY$.
If $C$ depends on additional parameters, we will emphasize it by writing $X\ll_{a,b}Y$.

\subsection{}
As usual, we denote by $\A=\R\times\A_{\fin}$ the ring of adeles.
Let $\zzz$ be the center of the universal enveloping algebra of the (complexified) Lie algebra of $G_\infty:=G(\R)$.
Fix a maximal compact subgroup $\K=\prod_{p\le\infty}\K_p$ of $G(\A)$
such that the factors $\K_p$ are special for all $p$ and hyperspecial for almost all $p$.

Fix a Haar measure on $G(\A)$.
Let $\ccg$ be the $*$-algebra (under convolution, with $f^*(g)=\overline{f(g^{-1})}$)
of smooth, complex-valued, compactly supported functions on $G(\A)^1$.
(By definition, smoothness implies that the function is right-invariant under a suitable open subgroup of $G(\A_{\fin})$.)
The right regular representation gives rise to a $*$-representation $R$ of
$\ccg$ on the Hilbert space $L^2(G(\Q)\bs G(\A)^1)$.
Explicitly, for any $f\in \ccg$, $R(f)$ is the operator
\[
R(f)\varphi(x)=\int_{G(\A)^1}f(g)\varphi(xg)\ dg,\ \ \varphi\in L^2(G(\Q)\bs G(\A)^1),\  x\in G(\Q)\bs G(\A)^1
\]
which is an integral operator with kernel
\[
K_f(x,y)=\sum_{\gamma\in G(\Q)}f(x^{-1}\gamma y),\ \ x,y\in G(\Q)\bs G(\A)^1.
\]

Let $P=M\ltimes U$ be a parabolic subgroup of $G$ defined over $\Q$ with unipotent radical $U$
and Levi subgroup $M$ defined over $\Q$. Denote by $\modulus_P$ the modulus function of $P(\A)$ and
by $A_M$ the identity connected component of $T_M(\R)$,
where $T_M$ is the split part of the center of $M$. Thus, $A_MM(\Q)\bs M(\A)$ is of finite volume.
Denote by $L^2_{\disc}(A_MM(\Q)\bs M(\A))$ the discrete part of $L^2(A_MM(\Q)\bs M(\A))$.
The space of square-integrable automorphic forms on $A_MM(\Q)\bs M(\A)$ is dense in $L^2_{\disc}(A_MM(\Q)\bs M(\A))$.
Let
\begin{multline*}
\AF^2_P=\{\varphi:U(\A)M(\Q)\bs G(\A)\rightarrow\C\text{ smooth and $\zzz$-finite}:\\
\varphi(ag)=\modulus_P(a)^{\frac12}\varphi(g)\ \ \forall a\in A_M,\ g\in G(\A),
\int_{A_MM(\Q)U(\A)\bs G(\A)}\abs{\varphi(g)}^2\ dg<\infty\}
\end{multline*}
and let $L^2_P$ be the Hilbert completion of $\AF^2_P$.
We may identify $L^2_P$ with the (normalized) parabolic induction $\Ind_{P(\A)}^{G(\A)}L^2_{\disc}(A_MM(\Q)\bs M(\A))$.
Let $\Pi_2(M)$ be the set of equivalence classes of irreducible representations of $M(\A)$ that
occur discretely in $L^2(A_MM(\Q)\bs M(\A))$. (The central character of any $\pi\in\Pi_2(M)$ is trivial on $A_M$.)
For any $\pi\in\Pi_2(M)$ let $\AF_{P,\pi}^2$ be subspace of $\AF_P^2$ consisting of the functions
such that for all $x\in G(\A)$ the function $m\in M(\A)\mapsto\modulus_P(m)^{-\frac12}\varphi(mx)$ belongs to the $\pi$-isotypic component
of $L^2_{\disc}(A_MM(\Q)\bs M(\A))$. Let $L^2_{P,\pi}$ be the closure of $\AF_{P,\pi}^2$ in $L^2_P$. Thus,
\[
\AF^2_P=\oplus_{\pi\in\Pi_2(M)}\AF^2_{P,\pi}
\]
and
\[
L^2_P=\widehat\oplus_{\pi\in\Pi_2(M)}L^2_{P,\pi}.
\]
For any $\pi\in\Pi_2(M)$ we fix an orthonormal basis $\ortho_P(\pi)$ of $\AF_{P,\pi}^2$.

Let $X^*(M)$ be the lattice of rational characters of $M$ and let $\aaa_M^*=X^*(M)\otimes\R$ be
the real vector space generated by $X^*(M)$. We also write $\aaa_{M,\C}^*=X^*(M)\otimes\C$.
The restriction map $X^*(M)\rightarrow X^*(T_M)$ identifies $\aaa_M^*$ with $X^*(T_M)\otimes\R$.
The dual space of $\aaa_M^*$ will be denoted by $\aaa_M$. Thus, $\aaa_M=X_*(T_M)\otimes\R$
where $X_*(T_M)$ is the lattice of co-characters of $T_M$. We denote the canonical pairing
on $\aaa_M^*\times\aaa_M$ by $\sprod{\cdot}{\cdot}$.
Let $H_M:M(\A)\rightarrow\aaa_M$ be the homomorphism characterized by
\[
e^{\sprod\chi{H_M(m)}}=\abs{\chi(m)},\ \ \chi\in X^*(M).
\]
The kernel of $H_M$ is $M(\A)^1$ and the restriction of $H_M$ to $A_M$ is an isomorphism.

Fix a maximal $\Q$-split torus $\mst$ of $G$ that is in a good position with respect to $\K$,
i.e., for every $p$ the group $\K_p$ is the stabilizer of a special point in the apartment associated to $T_0$.
Denote by $W$ the Weyl group $N_{G(\Q)}(\mst)/\mst$.
It acts on $\aaa_0:=\aaa_{\mst}$ and $\aaa_0^*$.
We may identify $\aaa_0$ with the Lie algebra of $\mst(\R)$.
We fix a $W$-invariant Euclidean structure on $\aaa_0$.
(Of course, if $G$ is semisimple, then the Killing form is a $W$-invariant inner product on $\aaa_0$.)

Let $\pars$ be the finite set of parabolic subgroups defined over $\Q$ and
containing $\mst$. Each $P\in\pars$ admits a unique Levi decomposition $P=M\ltimes U$ with $M\supset\mst$
(necessarily defined $\Q$).
We write $\Levis$ for the set of all such Levi subgroups as we vary $P\in\pars$.
In other words, $\Levis$ is the set of centralizers of subtori of $\mst$ in $G$.
We say that $P,Q\in\pars$ are associate, denoted $P\sim Q$, if their Levi parts are conjugate in $G(\Q)$.
For every $M\in\Levis$ we identify $\aaa_M$ as a subspace of $\aaa_0$.
We have orthogonal decompositions
\[
\aaa_0=\aaa_0^M\oplus\aaa_M,\ \ \aaa_0^*=(\aaa_0^M)^*\oplus\aaa_M^*
\]
where
\[
\aaa_0^M=X_*(\mst\cap M^{\der})\otimes\R=X_*(\mst/T_M)\otimes\R
\]
and
\[
(\aaa_0^M)^*=X^*(\mst\cap M^{\der})\otimes\R=X^*(\mst/T_M)\otimes\R.
\]
Here, $M^{\der}$ is the derived group of $M$.
For any $M\in\Levis$ we write $\pars(M)$ for the set of $P\in\pars$ with Levi part $M$
and let $W(M)=N_{G(F)}(M)/M$, which can be identified with a subgroup of $W$.
For any $P\in\pars(M)$ let $\Delta_P\subset X^*(T_M)\subset\aaa_M^*$ be the corresponding set of simple roots.
(These are the non-zero projections to $\aaa_M^*$ of the simple roots with respect to any minimal parabolic subgroup in $\pars$
contained in $P$.)
We extend $H_M$ to a left $U_P(\A)$ and right $\K$-invariant map $H_P:P(\A)\rightarrow\aaa_M$.
For any $\lambda\in\aaa_{M,\C}^*$ and a function $\varphi$ on $G(\A)$ let $\varphi_\lambda$ be the function
\[
\varphi_\lambda(x)=\varphi(x)e^{\sprod{\lambda}{H_P(x)}}.
\]
This gives rise to the family of representations $I_P(\lambda)$ of $G(\A)$ on $L^2_P$ given by
\[
(I_P(g,\lambda)\varphi)_\lambda(x)=\varphi_\lambda(xg),\ \ x,g\in G(\A).
\]
For any $f\in \ccg$ we write
\[
I_P(f,\lambda)=\int_{G(\A)^1}f(g)I_P(g,\lambda)\ dg,
\]
a bounded operator on $L^2_P$.

Define a height $\norm{\cdot}$ on $G(\A)$ and heights $\norm{\cdot}_p$ on $G(\Q_p)$, $p\le\infty$,
as in \cite{MR518111}. We have $\norm{x}=\prod_{p\le\infty}\norm{x_p}_p$.

\subsection{}
For any $\varphi\in\AF_P^2$ the Eisenstein series
\[
E_P(\varphi,\lambda)=\sum_{\gamma\in P(\Q)\bs G(\Q)}\varphi_\lambda(\gamma g),\ \ g\in G(\A),
\]
which converges for $\Re\sprod{\lambda}{\alpha^\vee}\gg1$ for all $\alpha\in\Delta_P$,
admits a meromorphic continuation to $\aaa_{M,\C}^*$ and is analytic for $\lambda\in\iii\aaa_M^*$ (see \cite{MR2402686} for the non-$\K$-finite case).

By the theory of Eisenstein series we have a spectral expansion
\[
K_f(x,y)=\sum_{P\in\pars/\sim}K_{f,P}(x,y)
\]
over associate classes of parabolic subgroups, where for any $P\in\pars(M)$
\[
K_{f,P}(x,y)=\abs{W(M)}^{-1}\sum_{\pi\in\Pi_2(M)}\int_{\iii(\aaa_M^G)^*}
\sum_{\varphi\in\ortho_P(\pi)}E_P(x,I_P(f,\pi,\lambda)\varphi,\lambda)
\overline{E_P(y,\varphi,\lambda)}\ d\lambda,
\]
and $I_P(f,\pi,\lambda)$ is the restriction of $I_P(f,\lambda)$ to $L^2_{P,\pi}$.

We note that for any $f_1,f_2\in \ccg$ we have
\begin{multline*}
K_{f_1*f_2^*,P}(x,y)=\\\abs{W(M)}^{-1}\sum_{\pi\in\Pi_2(M)}\int_{\iii(\aaa_M^G)^*}
\sum_{\varphi\in\ortho_P(\pi)}E_P(x,I_P(f_1,\pi,\lambda)\varphi,\lambda)
\overline{E_P(y,I_P(f_2,\pi,\lambda)\varphi,\lambda)}\ d\lambda.
\end{multline*}

\subsection{}
Fix a minimal parabolic subgroup $P_0=\mst\ltimes U_0\in\pars(\mst)$ and let $\Delta_0\subset X^*(\mst)$ be the corresponding
set of simple roots.
For any $T\in\aaa_0$ let $\regd(T)=\min_{\alpha\in\Delta_0}\sprod{\alpha}T$
and let $\Lambda^T$ be Arthur's truncation operator \cite{MR558260} with respect to $P_0$.
It takes functions of uniform moderate growth to rapidly decreasing functions provided that $\regd(T)>C_0$ for
some constant $C_0$ depending only on $G$. Under this condition, $\Lambda^T$ also defines an orthogonal projection on $L^2(G(\Q)\bs G(\A)^1)$.
We will write
\[
\aaa_0^{\pls}=\{T\in\aaa_0:\regd(T)>C_0\}.
\]

Let $\le$ be the partial order on $\aaa_0^{\pls}$ defined by $T_1\le T_2$ if $T_2-T_1$
is a linear combination of simple co-roots with non-negative coefficients.
Suppose that $T_1\le T_2$. Then by \cite{MR558260}*{Lemma 1.1} we have
$\Lambda^{T_2}\Lambda^{T_1}=\Lambda^{T_1}$. Thus, for any $\varphi$ of uniform moderate growth we have
\[
\norm{\Lambda^{T_1}\varphi}_2^2=(\Lambda^{T_1}\varphi,\varphi)=(\Lambda^{T_2}\Lambda^{T_1}\varphi,\varphi)=
(\Lambda^{T_1}\varphi,\Lambda^{T_2}\varphi)\le\norm{\Lambda^{T_1}\varphi}_2\norm{\Lambda^{T_2}\varphi}_2.
\]
Hence,
\begin{equation} \label{eq: mntrnc}
\norm{\Lambda^{T_1}\varphi}_2\le\norm{\Lambda^{T_2}\varphi}_2.
\end{equation}

Let $T\in\aaa_0^{\pls}$ and consider the operator $\Lambda^T\circ R(f)\circ\Lambda^T$ on $L^2(G(\Q)\bs G(\A)^1)$.
This is a trace class integral operator whose kernel is given by
\[
K^T_f(x,y)=\Lambda_{G\times G}^{(T,T)}K_f(x,y),
\]
where $\Lambda^{(T,T)}_{G\times G}$ denotes truncation in both variables.
Let
\begin{equation} \label{DefinitionJT}
J^T(f)=\tr (\Lambda^T\circ R(f)\circ\Lambda^T)=\int_{G(\Q)\bs G(\A)^1}K^T_f(x,x)\ dx.
\end{equation}
Note that
\[
J^T(f)=\tr (R(f)\circ\Lambda^T)=\int_{G(\Q)\bs G(\A)^1}\Lambda_y^T K_f(x,y)\big|_{y=x}\ dx,
\]
which is how this distribution is defined in \cite{MR558260}.

On the other hand, Arthur introduced in \cite{MR518111} the modified kernel $k^T_f$ and its integral
\begin{equation} \label{DefinitionJTGeometric}
\mathbb{J}^T(f)=\int_{G(\Q)\bs G(\A)^1} k^T_f(x,x)\ dx,
\end{equation}
which by \cite{MR3534542}*{Theorem 5.1} is absolutely convergent and a polynomial function of the parameter $T$ for all $T \in \aaa_0^{\pls}$.

By a basic result of Arthur \cite{MR681737}*{Proposition 2.2} there exists a constant $C_1>0$ depending only on $G$ we have
\begin{equation} \label{eq: polyJ}
J^T(f) = \mathbb{J}^T(f)
\quad\text{for}\quad \regd(T) > C_1 \quad \text{and} \quad \supp f\subset \{x\in G(\A):\log\norm{x} < C_1^{-1} \regd(T) \}.
\end{equation}

Spectrally expanding $K^T_f$, we may write
\[
K^T_f(x,y)=\sum_{P\in\pars/\sim}\Lambda_{G\times G}^{(T,T)}K_{f,P}(x,y),
\]
where
\begin{multline*}
\Lambda_{G\times G}^{(T,T)}K_{f,P}(x,y)=\\\abs{W(M)}^{-1}
\sum_{\pi\in\Pi_2(M)}\int_{\iii(\aaa_M^G)^*}
\sum_{\varphi\in\ortho_P(\pi)}\Lambda^TE_P(x,I_P(f,\pi,\lambda)\varphi,\lambda)
\overline{\Lambda^TE_P(y,\varphi,\lambda)}\ d\lambda
\end{multline*}
for any $P\in\pars(M)$. Thus,
\[
J^T(f)=\sum_{P\in\pars/\sim}J_P^T(f)
\]
with
\begin{multline*}
J_P^T(f)=\int_{G(\Q)\bs G(\A)^1}\Lambda_{G\times G}^{(T,T)}K_{f,P}(x,x)\ dx=
\\\abs{W(M)}^{-1}
\sum_{\pi\in\Pi_2(M)}\int_{\iii(\aaa_M^G)^*}
\sum_{\varphi\in\ortho_P(\pi)}\int_{G(\Q)\bs G(\A)^1}\Lambda^TE_P(x,I_P(f,\pi,\lambda)\varphi,\lambda)
\overline{\Lambda^TE_P(x,\varphi,\lambda)}\ dx\ d\lambda.
\end{multline*}
In particular, we have the following crucial positivity property.
\[
J_P^T(f*f^*)=\abs{W(M)}^{-1}
\sum_{\pi\in\Pi_2(M)}\int_{\iii(\aaa_M^G)^*}
\sum_{\varphi\in\ortho_P(\pi)}\norm{\Lambda^TE_P(\cdot,I_P(f,\lambda)\varphi,\lambda)}_2^2\ d\lambda\ge0.
\]
Note that $J^T_P(f)$ is not necessarily a polynomial in $T$, even if $P=G$ and $\regd(T)$ is large.
(In general, $J^T_P(f)$ approximates a polynomial in $T$ as $\regd(T)\rightarrow\infty$
but we will not use this fact.)

Let $L^2_{\cusp}(A_G G(\Q)\bs G(\A))$ be the cuspidal part of $L^2(A_G G(\Q)\bs G(\A))$
and let $L^2_{\resid}(A_G G(\Q)\bs G(\A))$ be the orthogonal complement of $L^2_{\cusp}(A_G G(\Q)\bs G(\A))$
in $L^2(A_G G(\Q)\bs G(\A))$.
Denote by $\Pi_{\cusp}(G)$ (resp., $\Pi_{\resid}(G)$) the set of equivalence classes of irreducible representations that occur in
$L^2_{\cusp}(A_G G(\Q)\bs G(\A))$ (resp., $L^2_{\resid}(A_G G(\Q)\bs G(\A))$).
Note that in general $\Pi_{\cusp}(G)$ and $\Pi_{\resid}(G)$ are not disjoint.
For any $f\in\ccg$ let $R_{\cusp}(f)$ denote the restriction of $R(f)$ to
$L^2_{\cusp}(G(\Q)\bs G(\A)^1) \simeq L^2_{\cusp}(A_G G(\Q)\bs G(\A))$.
We will need the following result due to Wallach.

\begin{lemma}[\cite{MR733320}] \label{lem: wallach}
The local components $\pi_p$ of any $\pi\in\Pi_{\resid}(G)$ are non-tempered.
\end{lemma}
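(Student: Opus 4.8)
The plan is to recall the structure of the residual spectrum via Langlands' theory and then invoke the unitarity constraints on the resulting representations. First, I would use the Langlands description of $L^2_{\resid}(A_G G(\Q)\bs G(\A))$: every $\pi \in \Pi_{\resid}(G)$ occurs as a residue of an Eisenstein series $E_Q(\varphi,\lambda)$ attached to a proper parabolic $Q = LV$ with $L \supsetneq G$ impossible — rather $L \subsetneq G$ a proper Levi — and a cuspidal $\sigma \in \Pi_{\cusp}(L)$, the residue being taken at a point $\lambda_0 \in \aaa_{L}^*$ with $\Re\lambda_0 \neq 0$ lying in the positive chamber (this is where the pole of the intertwining operator sits). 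Thus $\pi$ is a subquotient of the induced representation $I_Q(\sigma_{\lambda_0})$ with $\lambda_0$ genuinely off the unitary axis. Locally at each place $p$, $\pi_p$ is then a Langlands quotient, hence a subquotient of $\Ind_{Q(\Q_p)}^{G(\Q_p)} \sigma_{p,\lambda_0}$ with $\Re\lambda_0$ in the open positive chamber.

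The key point, and the content of Wallach's theorem, is that such a local constituent cannot be tempered. The argument is a matching-of-exponents / Casselman-criterion computation: a tempered representation of $G(\Q_p)$ has all its Jacquet-module exponents (with respect to any parabolic) in the closed negative cone up to the half-sum of roots, i.e.\ its leading exponents lie on the walls, while the Langlands quotient $J(Q,\sigma,\lambda_0)$ with $\Re\lambda_0$ strictly positive has a leading exponent strictly inside the positive cone. The two are incompatible, so $\pi_p$ is non-tempered. I would phrase this uniformly over $p \le \infty$ by citing Wallach's result directly (which is exactly \cite{MR733320}), so that in the write-up this lemma is a quotation rather than a reconstruction; the only thing one needs beyond the citation is to note that the relevant $\lambda_0$ is bounded away from the unitary axis, which is automatic because it is a pole of a global intertwining operator and these occur only at specific rational points of the positive chamber (in fact, by Langlands, at points where a ratio of completed automorphic $L$-functions has a pole).

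The main obstacle, if one wanted a self-contained proof rather than a citation, would be establishing that \emph{every} point of the residual spectrum arises from a cuspidal datum with $\Re\lambda_0$ strictly positive — i.e.\ that there is no residual representation sitting on the unitary axis. This is genuinely part of Langlands' square-integrability theory: one must know that the iterated residue process producing $L^2_{\resid}$ only ever picks up poles strictly inside the chamber, and that the square-integrability of the resulting residue (via the Langlands square-integrability criterion in terms of the exponents) forces $\Re\lambda_0 \neq 0$. Since the statement is attributed to Wallach and we are permitted to use earlier results freely, in the paper I would simply state: by the description of the residual spectrum together with Wallach's analysis of its local components \cite{MR733320}, each $\pi_p$ for $\pi \in \Pi_{\resid}(G)$ is non-tempered, and leave the argument at the level of this citation.
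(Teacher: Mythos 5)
Your proposal reaches the right conclusion, but the route is genuinely different from the paper's, and in one respect it misses a subtlety that the paper is explicitly at pains to address. You go through Langlands' residual-spectrum theory: realize $\pi$ as a (multi-)residue of a cuspidal Eisenstein series at a point $\lambda_0$ with $\Re\lambda_0$ strictly in the positive chamber, conclude that each $\pi_p$ is a Langlands quotient of $\Ind_{Q(\Q_p)}^{G(\Q_p)}\sigma_{p,\lambda_0}$, and then invoke Casselman's criterion to rule out temperedness. The paper does not invoke the description of $L^2_{\resid}$ by iterated residues at all. Instead it argues directly with a square-integrable automorphic form $\varphi$ in the space of $\pi$: the Langlands square-integrability criterion (\cite{MR1361168}*{I.4.11}) places the cuspidal exponents of $\varphi$ strictly inside the negative obtuse Weyl chamber, every such exponent is an exponent of $\pi_p$ at every place, and this is incompatible with Casselman's temperedness criterion for $\pi_p$ unless $\varphi$ is cuspidal. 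This is shorter and needs strictly less machinery -- in particular, it sidesteps the point you flag as the ``main obstacle,'' namely knowing that all residues live strictly off the unitary axis, because the square-integrability criterion for $\varphi$ delivers the needed strict positivity of the exponent directly. It also avoids a potential gap in your sketch: a priori $\pi_p$ is only a subquotient of the induced representation, and subquotients of degenerate principal series can be tempered (e.g.\ Steinberg sits inside the same induced representation as the trivial representation), so one must argue that $\pi_p$ really is the Langlands quotient, or else argue with exponents as the paper does.

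A second point: you treat the citation to \cite{MR733320} as covering all places uniformly, but Wallach's paper is stated only for the archimedean local components. The paper explicitly notes this and supplies the short argument above precisely to handle the non-archimedean places, observing that the same proof applies there and is in fact easier. If you quote Wallach without comment, a careful reader will notice that the $p$-adic case is not literally in the cited reference.
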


Although in [ibid.] this is technically only stated for the archimedean components,
the same proof, but easier, applies to the non-archimedean components as well.
Namely, the cuspidal exponents of any square-integrable automorphic form $\varphi$ lie in the negative obtuse Weyl chamber \cite{MR1361168}*{I.4.11}.
On the other hand, if $\varphi$ occurs in the space of $\pi\in\Pi_2(G)$, then every cuspidal exponents of $\varphi$
is an exponents of $\pi_p$ for all $p$. Hence, $\pi_p$ cannot be tempered
unless $\varphi$ is cuspidal.

\section{A non-archimedean separation lemma} \label{SectionPadic}

In this section we construct the Hecke operators (one for each prime) that will be used to amplify the non-cuspidal
part of the trace formula. The construction is elementary, using the Stone--Weierstrass theorem as the main tool.

From now on we assume that $G$ is a Chevalley group over $\Q$ of rank $\rk$ (in this section it would be sufficient to assume that $G$ is split reductive over  $\Q$).

\subsection{} \label{sec: heckebasic}
Let us first recall some basic facts and set some notation pertaining to unramified representations
and the Satake isomorphism. See \cites{MR0435301, MR1696481} for standard references.
Let $\dmst$ be the torus dual to $\mst$, considered as a torus over $\C$.
We denote by $\nu\mapsto\nu^\vee$ the resulting isomorphism $X^*(\dmst)\rightarrow X_*(\mst)$
between the lattices of rational characters of $\dmst$ and the co-characters of $\mst$.

For every prime $p$ let $G_p=G(\Q_p)$, $\K_p=G(\Z_p)$ and denote by
$\unrd{G_p}$ the unramified (admissible) spectrum of $G_p$.
For any $\lambda\in\dmst(\C)$ let $\chi_\lambda$ be the unramified
character of $\mst(\Q_p)$ such that $\chi_\lambda(\nu^\vee(p))=\nu(\lambda)$ for any $\nu\in X^*(\dmst)$.
The map $\lambda\mapsto\chi_\lambda$ defines an isomorphism of topological groups
between $\dmst(\C)$ and the group of unramified characters of
$\mst (\Q_p)$. (Note that the Levi part of $P_0$ is $T_0$ since $G$ is split.)
For any $\lambda\in\dmst(\C)$, the induced representation
$\Ind_{P_0 (\Q_p)}^{G (\Q_p)}\chi_\lambda$ admits a unique unramified irreducible subquotient
$\pi_\lambda$, and the isomorphism class of $\pi_\lambda$ depends only on $W\lambda$.
The map $\lambda\mapsto\pi_\lambda$ gives rise to a homeomorphism
\[
\dmst(\C)/W\rightarrow\unrd{G_p}.
\]
For any $\pi\in\unrd{G_p}$ we will write $\prm_\pi\in\dmst(\C)/W$ for the Frobenius--Hecke parameter of $\pi$.
For instance, if $\pi$ is the identity representation, then $\prm_\pi$ is the $W$-orbit of the element
$\rhosymb_p\in\dmst(\C)$ that corresponds to the square-root of the modulus character of $P_0 (\Q_p)$,
an unramified character of $\mst (\Q_p)$.

Let $\hermdual$ be the hermitian part of $\dmst(\C)$, namely
\[
\hermdual=\cup_{w\in W}\{t\in\dmst(\C):w(\bar t)=t^{-1}\}
\]
where $\bar t$ is the complex conjugate of $t$.
Clearly $\hermdual$ is closed in $\dmst(\C)$.
Let
\[
\tempdual=\{t\in\dmst(\C):\bar t=t^{-1}\}\subset\hermdual
\]
be the maximal compact subgroup of $\dmst(\C)$. Finally, set
\[
\ntdual=\hermdual\setminus\tempdual.
\]
The sets $\hermdual$, $\tempdual$ and $\ntdual$ are $W$-invariant.
Their quotients by $W$ parameterize the sets of hermitian, tempered and
non-tempered (unramified, irreducible) representations of $G_p$, respectively.
In particular, the set of unitarizable, unramified, irreducible representations of $G_p$
corresponds to a compact, $W$-invariant subset $\unitdual_p$ of $\hermdual$ (which of course depends on $p$).
Note that $\unitdual_p$ strictly contains $\tempdual$ (unless $G$ is trivial).

For any commutative $*$-algebra $A$ we denote by $A_{\sa}$ the real subalgebra of self-adjoint elements of $A$ and by
$A_{\pd}\subset A_{\sa}$ the convex cone generated by $x^*x$, $x\in A$.

Let $\invreg$ be the commutative $*$-algebra of $W$-invariant, regular functions on $\dmst$
where $h^*(t^{-1})=\bar h(t)=\overline{h(\bar t)}$.
As a vector space, $\invreg$ has the basis $e_\lambda=\frac1{\abs{W}}\sum_{w\in W}w\lambda$, $\lambda\in X^*(\dmst)/W$.
Note that $h^*(t)=\overline{h(t)}$ for any $h\in\invreg$ and $t\in\hermdual$.
Thus, $h(t)\in\R$ (resp., $h(t)\ge0$) for all $h\in\invreg_{\sa}$ (resp., $h\in\invreg_{\pd}$) and $t\in\hermdual$.

For any $\chi\in X^*(\mst)$ with corresponding co-character $\chi^\vee\in X_*(\dmst)$ let $p^\chi=\chi^\vee(p)\in\dmst(\C)$.
We extend the homomorphism $\chi\in X^*(\mst)\mapsto p^\chi\in\dmst(\C)$ to a surjective homomorphism
$\aaa_{0,\C}^*\rightarrow\dmst(\C)$ characterized by $p^{z\chi}=\chi^\vee(p^z)$
for any $\chi\in X^*(\mst)$ and $z\in\C$.

\subsection{} \label{sec: padicmes}
Fix the Haar measure on $G_p$ such that $\vol\K_p=1$.
Let $\Hecke_p$ be the Hecke algebra of bi-$\K_p$-invariant, compactly supported functions on $G_p$
with respect to convolution, with identity element $\one_{\K_p}$.
For any $f\in\Hecke_p$ we write $f^*(x)=\overline{f(x^{-1})}$.
We denote by
\[
\Sat=\Sat_p:\Hecke_p\rightarrow\invreg
\]
the Satake transform. It is an isomorphism of commutative $*$-algebras which is characterized by the property
\[
\hat f(\pi) = (\Sat_p f) (p^\lambda)\text{ for any }\pi=\pi_\lambda\in\unrd{G_p}
\]
where $\hat f(\pi)$ is the scalar by which $f$ acts on the one-dimensional space $\pi^{\K_p}$.

For any $\mu\in X^*(\dmst)$ in the positive Weyl chamber, the image under $\Sat$
of the space of functions in $\Hecke_p$ supported in
$\cup_{\lambda\le\mu}\K_p\lambda^\vee(p)\K_p$ is the span of $e_\lambda$, $\lambda\le\mu$.
Here, $\lambda\le\mu$ means that $\mu-\lambda$ is a sum of simple roots with non-negative coefficients.
It follows that
\begin{multline} \label{eq: suppSat}
\text{for any $h\in\invreg$ there exists $a = a(h)>0$ such that }
\\\supp\Sat_p^{-1}(h)\subset\{x\in G_p:\norm{x}_p\le p^a\}\text{ for all $p$}.
\end{multline}

Denote by $\mu_{\pl,p}$ the Plancherel measure on $\unitdual_p$ with respect to $G_p$.
It is the probability measure characterized by the property
\[
f(e)=\mu_{\pl,p}(\Sat_p f),\ \ f\in\Hecke_p,
\]
or equivalently
\[
\norm{f}_2^2=\mu_{\pl,p}(\abs{\Sat_p f}^2),\ \ f\in\Hecke_p.
\]
It is well known that the support of $\mu_{\pl,p}$ is $\tempdual$.
We will need the following fact.

\begin{lemma} \label{lem: fullsupp}
The support of the weak-$*$ limit of any convergent subsequence of $\mu_{\pl,p}$ is $\tempdual$.
In other words, $\inf_p\mu_{\pl,p}(U)>0$ for any open subset $U\ne\emptyset$ of $\tempdual$.
\end{lemma}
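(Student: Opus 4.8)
The plan is to exploit the known description of the spherical Plancherel measure $\mu_{\pl,p}$ on $\tempdual$ via the Macdonald formula, together with the uniformity of the relevant $c$-functions in $p$. First I would recall that $\mu_{\pl,p}$ is absolutely continuous with respect to the Haar probability measure $dt$ on the compact torus $\tempdual$, with a density $\gamma_p(t)$ that is, up to a positive normalizing constant, the product over roots $\alpha$ of factors of the shape $\abs{1-\alpha(t)}^2 / \abs{1-q_p^{-1}\alpha(t)}^2$ (with $q_p=p$ here, since $G$ is a Chevalley group and all residue characteristics are the residue cardinality), where $\alpha$ ranges over the positive roots of the dual root system. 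The key qualitative features are: the numerator vanishes only on the walls (a measure-zero union of subtori), and the denominator is bounded above by a constant independent of $p$ since $\abs{1-q_p^{-1}\alpha(t)}\ge 1-q_p^{-1}\ge 1/2$ for $p\ge2$ and $t\in\tempdual$.

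The main step is then: for any nonempty open $U\subseteq\tempdual$, show $\inf_p\mu_{\pl,p}(U)>0$. I would shrink $U$ to a small nonempty open ball $U'\subseteq U$ that avoids all the root hyperplanes $\{\alpha(t)=1\}$; such a ball exists since these hyperplanes form a proper closed subset. On $U'$ the numerator $\prod_\alpha\abs{1-\alpha(t)}^2$ is bounded below by a positive constant $c_{U'}>0$ depending only on $U'$ (not on $p$), and the denominator $\prod_\alpha\abs{1-q_p^{-1}\alpha(t)}^2$ is bounded above by $\prod_\alpha(1+q_p^{-1})^2\le\prod_\alpha(3/2)^2$, again independent of $p$. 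The normalizing constant is $\gamma_p(\tempdual)^{-1}$ where $\gamma_p(\tempdual)=\int_{\tempdual}\prod_\alpha\frac{\abs{1-\alpha(t)}^2}{\abs{1-q_p^{-1}\alpha(t)}^2}\,dt$; this integral is bounded above independently of $p$ (each integrand factor is $\le \abs{1-\alpha(t)}^2\cdot(1-q_p^{-1})^{-2}\le 4\abs{1-\alpha(t)}^2$ after using $q_p\ge 2$, and $\int_{\tempdual}\prod_\alpha\abs{1-\alpha(t)}^2\,dt$ is a fixed finite number). Putting these bounds together gives $\mu_{\pl,p}(U)\ge\mu_{\pl,p}(U')\ge c_{U'}\cdot(\text{const})^{-1}\cdot\vol(U')>0$ uniformly in $p$, which is exactly the second assertion; the statement about weak-$*$ limits of subsequences is then immediate, since any such limit $\mu_\infty$ satisfies $\mu_\infty(U)\ge\limsup_p\mu_{\pl,p}(\overline{U'})\ge\inf_p\mu_{\pl,p}(U')>0$ for every nonempty open $U$, forcing $\supp\mu_\infty=\tempdual$ (the reverse inclusion $\supp\mu_\infty\subseteq\tempdual$ is automatic since all $\mu_{\pl,p}$ are supported on the closed set $\tempdual$).

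The part I expect to require the most care is making the Macdonald/Plancherel density formula explicit and uniform in $p$: one needs the precise shape of the $c$-function for a split group, the fact that the Hecke parameter $q_p$ equals $p$ in the Chevalley setting (so the only $p$-dependence enters through $q_p^{-1}\to 0$), and a clean argument that both the density on $U'$ and the total mass normalizing constant are controlled by $p$-independent quantities. Once those uniform bounds are in hand, the rest is the elementary measure-theoretic extraction of a uniform lower bound and the passage to weak-$*$ limits sketched above. An alternative, if one prefers to avoid writing out the Macdonald formula, is to use the Plancherel inversion $f(e)=\mu_{\pl,p}(\Sat_p f)$ directly on a fixed test function $h\in\invreg_{\pd}$ supported near a chosen tempered parameter, but this still ultimately requires controlling $\Sat_p^{-1}(h)(e)$ in $p$, so the explicit density approach seems cleanest.
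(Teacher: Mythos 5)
Your proposal is correct and follows the same approach as the paper: the paper's own proof simply cites Macdonald's formula and asserts that the required uniform lower bound "easily follows" from the explicit Plancherel density; your argument makes precisely this explicit, bounding the density from below on an open set away from the root walls and controlling the normalizing integral uniformly in $p$.
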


\begin{proof}
The measure $\mu_{\pl,p}$ is absolutely continuous with respect to the Haar measure of $\tempdual$.
The density function is given by Macdonald's formula \cite{MR0435301}*{Chap. 5}.
From this, it easily follows that the weak-$*$ limit of $\mu_{\pl,p}$
as $p\rightarrow\infty$ exists and is also absolutely continuous, with an explicit density function
whose support is $\tempdual$.
\end{proof}

\begin{remark}
In fact, the weak-$*$ limit $\mu_{\operatorname{ST}}$ of $\mu_{\pl,p}$
as $p\rightarrow\infty$, which is often called the Sato--Tate measure,
can be described as follows \cite{MR3437869}*{Proposition 5.3} (an observation going back at least to \cite{MR1018385}).
Let $\hat{K}$ be a maximal compact subgroup of $\hat{G} (\C)$.
The conjugacy classes of $\hat{K}$ are parameterized by $\tempdual / W$.
Let $c: \hat{K} \to \tempdual / W$ be the corresponding conjugation invariant map.
Then $\mu_{\operatorname{ST}}$ is the pushforward of the normalized Haar measure on $\hat{K}$ under
$c$, considered as a $W$-invariant measure on $\tempdual$.
\end{remark}

\subsection{}

For any $M\in\Levis$ we may identify the dual torus of $\mst/T_M$ with a subtorus $\dmstM$ of $\dmst$.
Thus,
\begin{equation} \label{eq: t0M}
\dmstM(\C)=\{\lambda\in\dmst(\C):\chi_\lambda\rest_{T_M (\Q_p)}\equiv 1\}.
\end{equation}
Let $\dmstM(\C)^1=\dmstM(\C)\cap\dmst(\C)^1$.
We will use the following elementary result.
\begin{lemma} \label{lem: elem}
For any $P\in\pars(M)$, $M\ne G$, there exist two disjoint, $W$-invariant, open subsets
$U_1, U_2\ne\emptyset$ of $\tempdual$ and an open neighborhood $U$ of the identity in $\tempdual$ with the following property: for every $\xi\in\tempdual$ the set $\xi U\dmstM(\C)^1$ is disjoint from $U_1$ or $U_2$.
\end{lemma}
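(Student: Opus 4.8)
The plan is to exploit the fact that $\dmstM(\C)^1$ is a proper subtorus of the compact torus $\tempdual$ (proper because $M \ne G$, so $\aaa_M^G \ne 0$ and hence $\dmstM(\C) \subsetneq \dmst(\C)$). Write $d = \dim_\R \tempdual$ and $k = \dim_\R \dmstM(\C)^1 < d$. The sets $\xi U \dmstM(\C)^1$ that we must avoid are, for small $U$, thin tubular neighborhoods of the translated subtori $\xi\dmstM(\C)^1$; each such set has measure (with respect to Haar measure $m$ on $\tempdual$) bounded by $C \cdot m(U)$ for a constant $C$ depending only on the subtorus, uniformly in $\xi$. So if we pick two small disjoint balls $B_1, B_2$ in $\tempdual$ and shrink $U$ enough that $C\,m(U) < \min(m(B_1), m(B_2))$, then for every $\xi$ the set $\xi U \dmstM(\C)^1$ cannot cover either $B_1$ or $B_2$; it misses a positive-measure — hence nonempty — portion of at least one of them. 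The only remaining issue is $W$-invariance, which is handled by replacing each candidate set with its $W$-saturation and choosing the balls in a fundamental domain away from the walls.

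In more detail, the steps I would carry out are: (1) Fix the Haar probability measure $m$ on $\tempdual$. Choose a point $\xi_0 \in \tempdual$ whose $W$-orbit has $\abs{W}$ distinct elements and which avoids any $W$-translate of the subtorus $\dmstM(\C)^1$ (possible since the union of those translates is a proper closed subset, as is the set of points with non-free $W$-action). Pick $\xi_0' $ similarly with $W\xi_0 \cap W\xi_0' = \emptyset$. Let $V_1, V_2$ be small open balls around $\xi_0, \xi_0'$ respectively, chosen small enough that their $W$-translates are pairwise disjoint and none meets a $W$-translate of $\dmstM(\C)^1$; set $U_i = W V_i$ (open, $W$-invariant, nonempty, disjoint). (2) Establish the uniform measure bound: there is a constant $C_0$ such that $m\big(\xi U \dmstM(\C)^1\big) \le C_0\, m(U)$ for every $\xi \in \tempdual$ and every symmetric open neighborhood $U$ of the identity. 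This follows by writing $\tempdual$ up to finite cover as $\dmstM(\C)^1 \times Q$ for a complementary subtorus $Q$, so that $\xi U \dmstM(\C)^1$ is contained in $\dmstM(\C)^1 \times (\text{projection of }\xi U\text{ to }Q)$, and the projection of $U$ to $Q$ has measure $\ll m(U)$ by compactness. (3) Choose $U$, a $W$-invariant open neighborhood of the identity in $\tempdual$, small enough that $C_0\, m(U) < \min(m(U_1), m(U_2))$. (One can start with any small symmetric ball $U_0$ and replace it by $\cap_{w\in W} w U_0$, or simply note that $m\big(\xi U_0 \dmstM(\C)^1\big)$ still satisfies the same type of bound.) (4) Conclude: for any $\xi \in \tempdual$, since $m\big(\xi U \dmstM(\C)^1\big) < m(U_1)$ and $< m(U_2)$, the set $\xi U \dmstM(\C)^1$ cannot contain $U_1$, and cannot contain $U_2$. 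In fact we need it to be \emph{disjoint} from one of them, not merely fail to contain one; for this, shrink $V_1, V_2$ further at the outset so that they are honest metric balls, and observe that $\xi U \dmstM(\C)^1$, being contained in a tube of "width" controlled by $U$, can meet a ball only if its center lies within that width of the tube. More cleanly: by compactness of $\tempdual$, cover $\dmstM(\C)^1$-translates uniformly — the hard content is just that $\{\xi U \dmstM(\C)^1 : \xi \in \tempdual,\ U \text{ a fixed small nbhd}\}$ is a family of sets of uniformly small diameter in the transverse directions, so two balls $U_1, U_2$ that are "far apart transversally" cannot both be met.

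The step I expect to be the main obstacle is step (4) as just qualified — passing from "$\xi U \dmstM(\C)^1$ is too small to contain $U_i$" to "it is disjoint from some $U_i$". The measure argument alone gives only the former. The clean fix is geometric rather than measure-theoretic: decompose $\tempdual$ (up to isogeny) as $\dmstM(\C)^1 \times Q$, let $\mathrm{pr}_Q$ be the projection to the transverse torus $Q$, and note $\mathrm{pr}_Q\big(\xi U \dmstM(\C)^1\big) = \mathrm{pr}_Q(\xi)\cdot \mathrm{pr}_Q(U)$, a translate of the fixed small set $\mathrm{pr}_Q(U)$. Now choose $U_1, U_2$ so that $\mathrm{pr}_Q(U_1)$ and $\mathrm{pr}_Q(U_2)$ are open, nonempty, and at distance $> \operatorname{diam}\mathrm{pr}_Q(U)$ apart in $Q$ (possible once $U$ is small, since $Q$ is positive-dimensional as $M \ne G$); then no single translate $\mathrm{pr}_Q(\xi)\cdot\mathrm{pr}_Q(U)$ can meet both, so $\xi U\dmstM(\C)^1$ is disjoint from at least one of $U_1, U_2$. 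Finally re-saturate everything under $W$ and shrink once more to restore $W$-invariance and disjointness of $U_1$ from $U_2$. This makes the proof essentially a statement about the transverse torus $Q$, where it is elementary.
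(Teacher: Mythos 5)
You correctly identified and repaired the flaw in your original measure-theoretic plan (which yields ``cannot contain'' rather than ``is disjoint from''), but the $W$-invariance step of your clean fix hides a second gap. The subtorus $\dmstM(\C)^1$ is preserved only by those elements of $W$ that normalize $M$, so the decomposition $\tempdual\cong\dmstM(\C)^1\times Q$ is not $W$-equivariant and $\pr_Q$ does not commute with the $W$-action. Consequently, after setting $U_i = W V_i$, the separation of $\pr_Q(U_1)=\bigcup_{w\in W}\pr_Q(w(V_1))$ from $\pr_Q(U_2)=\bigcup_{w\in W}\pr_Q(w(V_2))$ is no longer automatic, and shrinking $V_1,V_2$ cannot restore it unless the finitely many cosets $w(\xi_0)\dmstM(\C)^1$ and $w'(\xi_0')\dmstM(\C)^1$, $w,w'\in W$, are already pairwise disjoint. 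Your stated conditions on the base points --- that they avoid all $W$-translates of $\dmstM(\C)^1$ and that $W\xi_0\cap W\xi_0'=\emptyset$ --- do not ensure this; e.g.\ one can take $\xi_0'\in W\xi_0\dmstM(\C)^1$ with $\xi_0'\notin W\xi_0$, which satisfies your conditions yet makes the separation impossible for every choice of $V_1,V_2$. The sufficient (and necessary) requirement is $W\xi_0\dmstM(\C)^1\cap W\xi_0'\dmstM(\C)^1=\emptyset$, which is precisely the hypothesis the paper places on its two $W$-orbits $O_1,O_2$ at the outset.

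Once that condition is supplied your argument becomes essentially the paper's proof in different clothing, but the paper avoids the transverse-torus decomposition altogether and so never has to worry about $W$-equivariance of $\pr_Q$. It chooses $W$-invariant open neighborhoods $U_i$ of the orbits $O_i$ small enough that the compact sets $U_1^{\cl}\dmstM(\C)^1$ and $U_2^{\cl}\dmstM(\C)^1$ are disjoint; then every $\xi\in\tempdual$ lies outside at least one of these two closed sets, hence has an open neighborhood disjoint from one of them, and a Lebesgue-number compactness argument yields a single $U$ with $\xi U$ disjoint from $U_1^{\cl}\dmstM(\C)^1$ or from $U_2^{\cl}\dmstM(\C)^1$ for every $\xi$ --- which rephrases as $\xi U\dmstM(\C)^1$ being disjoint from the corresponding $U_i$. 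This is shorter, does not need an isogeny splitting, and makes the $W$-invariance issue disappear.
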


\begin{proof}
Fix two $W$-orbits $O_1,O_2$ in $\tempdual$ such that $O_1\dmstM(\C)^1\cap O_2\dmstM(\C)^1=\emptyset$.
Then there exist open, $W$-invariant neighborhoods $U_i$ of $O_i$ in $\tempdual$ such that
$U_1^{\cl}\dmstM(\C)^1$ and $U_2^{\cl}\dmstM(\C)^1$
are disjoint, where $A^{\cl}$ denotes the closure of $A$ in $\tempdual$.
Therefore, every $\xi\in\tempdual$ has an open neighborhood that is
disjoint from $U_1^{\cl}\dmstM(\C)^1$ or from $U_2^{\cl}\dmstM(\C)^1$.
By compactness, there exists an open neighborhood $U$ of the identity in $\tempdual$
such that for any $\xi\in\tempdual$ the neighborhood $\xi U$ of $\xi$ is disjoint from $U_1^{\cl}\dmstM(\C)^1$ or from
$U_2^{\cl}\dmstM(\C)^1$, which is the assertion of the lemma.
\end{proof}

We now state the main technical result of this section.

\begin{proposition} \label{prop: tech}
Let $U\ne\emptyset$ be an open, $W$-invariant subset of $\tempdual$.
Then, there exist constants $A,B,a>0$ and for every $p$, an element $\tau_{U,p}\in(\Hecke_p)_{\sa}$ such that
\begin{enumerate}
\item $\tau_{U,p}(e)=0$.
\item $\norm{\tau_{U,p}}_1\le Bp^A$.
\item $\norm{\tau_{U,p}}_2\le B$.
\item $\Sat_p\tau_{U,p}(x)\ge1$ for all $x\in\hermdual\setminus U$.
\item $\supp\tau_{U,p}\subset\{x\in G_p:\norm{x}_p\le p^a\}$.
\end{enumerate}
\end{proposition}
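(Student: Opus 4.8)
The plan is to realize $\tau_{U,p}$ as the Satake inverse of a single fixed polynomial $h\in\invreg_{\sa}$, shifted to kill the value at the identity, and to check that all five conditions reduce to uniform-in-$p$ statements about Satake supports and about the growth of Satake coefficients. First I would produce, by the Stone--Weierstrass theorem applied to the compact $W$-space $\tempdual$, a function $g\in\invreg_{\sa}$ that is real-valued and satisfies $g\ge 2$ on the (nonempty, closed, $W$-invariant) set $\tempdual\setminus U$; concretely, approximate the continuous $W$-invariant function which is $3$ on $\tempdual\setminus U$ and, say, $0$ near enough to nothing (one only needs a lower bound on the complement of $U$), noting that $\invreg$ is exactly the algebra of $W$-invariant regular functions and hence is dense in $C(\tempdual)^W$ since it separates $W$-orbits and is closed under $h\mapsto h^*$, which on $\tempdual$ is complex conjugation. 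Then set $h=g$ and define $\tau_{U,p}=\Sat_p^{-1}(h)-h(\rhosymb_p)\cdot\one_{\K_p}$; note that $h(\rhosymb_p)=\hat h(\text{triv})=(\Sat_p^{-1}h)(e)$ after unwinding the normalization, so this subtraction is exactly what is needed for (1).

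Next I would verify the conditions in turn. Condition (1) holds by construction. For (4): on $\hermdual$ we have $\Sat_p\tau_{U,p}(t)=h(t)-h(\rhosymb_p)$; the issue is that $h(\rhosymb_p)$ varies with $p$, but $h$ is a fixed polynomial and $\rhosymb_p$ lies in a bounded region? Actually $\rhosymb_p$ does \emph{not} stay bounded as $p\to\infty$, so this naive subtraction is dangerous. The cleaner route, and the one I would actually take, is to instead build $h$ so that it already vanishes appropriately: use Stone--Weierstrass to get $g\in\invreg_{\sa}$ with $g\ge 3$ on $\tempdual\setminus U$ and $|g|\le 1$ on a fixed small neighborhood $U_0$ of the identity (shrinking $U_0$ if necessary so that $g$ is close to its value at $e$ there), and then replace $g$ by $h=g-g(e)\in\invreg_{\sa}$, which has $h(e_{\dmst})=0$. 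Since $e_{\dmst}\in\tempdual$ is the trivial parameter, $(\Sat_p^{-1}h)(e)=\hat{\Sat_p^{-1}h}(\text{triv})=h(\text{parameter of triv})=h(e_{\dmst})=0$ for \emph{every} $p$ — this is the point of using the identity of $\dmst(\C)$ rather than $\rhosymb_p$. So set $\tau_{U,p}=\Sat_p^{-1}(h)$. Then (1) holds, and (4) holds because on $\hermdual\setminus U$ we have $\Sat_p\tau_{U,p}=h\ge 1$ (arrange $g\ge 3$, so $h=g-g(e)\ge 3-1=2\ge 1$ on $\tempdual\setminus U$, and then extend to all of $\hermdual\setminus U$ — here I must be a little careful, since $h\ge 1$ is only guaranteed on $\tempdual\setminus U$, not on all of $\hermdual\setminus U$; I would handle the non-tempered part by noting $\hermdual\setminus U\supset\ntdual$ is not controlled by Stone--Weierstrass on $\tempdual$). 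This is where a small additional argument is needed.

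The non-tempered points are in fact the \textbf{main obstacle}, and I would resolve them as follows. Because $h$ is a fixed polynomial with real coefficients in the $e_\lambda$ basis and $h\ge 0$ is \emph{not} automatic off $\tempdual$, I would instead choose, again by Stone--Weierstrass on the compact set $\unitdual_p$? — no, that set depends on $p$. The right fix: the problematic region $\hermdual\setminus U$ is not compact, but one only needs $\Sat_p\tau_{U,p}\ge 1$ there. Take $g\in\invreg_{\pd}$ (a \emph{sum of squares} $\sum |g_i|^2$), so that automatically $g(t)\ge 0$ for all $t\in\hermdual$, chosen so that $g\ge 3$ on $\tempdual\setminus U$; then $h=g-g(e)\in\invreg_{\sa}$ satisfies $h(t)=g(t)-g(e)\ge -g(e)$ everywhere on $\hermdual$, which is not yet $\ge 1$. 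To actually get $\ge 1$ on all of $\hermdual\setminus U$, I would exploit that $\hermdual\setminus U$ is a neighborhood of the "boundary at infinity": add a fixed sum-of-squares term supported (in the $e_\lambda$-expansion) on a dominant weight $\mu$ far enough out, so that its value grows at the non-tempered and large-parameter points while staying controlled; combined with the Stone--Weierstrass term this yields $h\ge 1$ on $\hermdual\setminus U$ and $h(e_{\dmst})=0$. Conditions (2), (3), (5) are then uniform: since $h\in\invreg$ is a \emph{single} fixed element, write $h-h(e_{\dmst})$ (equivalently $h$) in the basis $\{e_\lambda\}$, so $\tau_{U,p}=\Sat_p^{-1}(h)$ is a fixed linear combination $\sum_\lambda c_\lambda\,\Sat_p^{-1}(e_\lambda)$ with $c_\lambda$ independent of $p$; by \eqref{eq: suppSat} each $\Sat_p^{-1}(e_\lambda)$ is supported in $\{\norm{x}_p\le p^{a_\lambda}\}$, giving (5) with $a=\max a_\lambda$. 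For (3), by the Plancherel identity $\norm{\Sat_p^{-1}(e_\lambda)}_2^2=\mu_{\pl,p}(|e_\lambda|^2)\le\sup_{\tempdual}|e_\lambda|^2$, which is bounded independently of $p$ since $\tempdual$ is compact and $e_\lambda$ is a fixed function; summing gives (3). For (2), bound $\norm{\Sat_p^{-1}(e_\lambda)}_1$ by $\norm{\one_{\K_p\lambda^\vee(p)\K_p}}_1$-type volumes, which are $\ll p^{\sprod{2\rhosymb}{\lambda^\vee}+O(1)}$ — a polynomial in $p$ of fixed exponent since $\lambda$ ranges over the fixed finite support of $h$ — so $\norm{\tau_{U,p}}_1\le Bp^A$ with $A$ the maximal such exponent. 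I would double-check the exact normalization of $\rhosymb_p$ versus the identity of $\dmst(\C)$ when asserting $(\Sat_p^{-1}h)(e)=h(e_{\dmst})$; this is the only delicate bookkeeping point, and it follows directly from the characterization $\hat f(\pi_\lambda)=(\Sat_p f)(p^\lambda)$ specialized to $\lambda$ corresponding to the trivial character, i.e. $p^\lambda=e_{\dmst}$.
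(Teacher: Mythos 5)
Your proof has a critical error in the treatment of condition (1), and it stems from a misidentification of what $\tau_{U,p}(e)$ actually is. You assert, in two different places, that $(\Sat_p^{-1}h)(e)=h(\rhosymb_p)$ and then that $(\Sat_p^{-1}h)(e)=h(e_{\dmst})$. Both are false. Evaluating $\Sat_p^{-1}h$ at the \emph{group element} $e\in G_p$ is not the same as evaluating the Satake transform $h$ at the \emph{Satake parameter} of any representation. The correct identity, stated explicitly in \S\ref{sec: padicmes}, is
\[
(\Sat_p^{-1}h)(e)=\mu_{\pl,p}(h)=\int_{\tempdual}h\, d\mu_{\pl,p},
\]
the Plancherel integral of $h$, not a point evaluation. (For $\GL_1$, with $T=\one_{p\Z_p^*}$ and $\Sat_p T=z$, one has $T(e)=0=\int_{S^1}z$, while $(\Sat_p T)(e_{\dmst})=1$ — so even in the simplest case your formula gives the wrong value.) Consequently, setting $\tau_{U,p}=\Sat_p^{-1}(h)$ with $h$ a fixed polynomial that merely vanishes at $e_{\dmst}$ does \emph{not} give $\tau_{U,p}(e)=0$; the Plancherel integral $\mu_{\pl,p}(h)$ will in general be nonzero and $p$-dependent.

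Once you see that the quantity to subtract is $\mu_{\pl,p}(h)$ — which varies with $p$ — the real difficulty becomes clear: subtracting a $p$-dependent constant threatens the uniformity of (4). The paper's proof is built precisely to handle this. It takes $g_p = X\bigl(f-\mu_{\pl,p}(f)\bigr)+h-\mu_{\pl,p}(h)$ with $f\in\invreg_{\pd}$ from Stone--Weierstrass (Lemma \ref{lem: aux2}) and $h\in\invreg_{\pd}$ proper on $\hermdual$ (Lemma \ref{lem: prpr}); by design $\tau_p(e)=\mu_{\pl,p}(g_p)=0$. The crucial input you are missing is Lemma \ref{lem: fullsupp}: $\inf_p\mu_{\pl,p}(V)>0$ for any nonempty open $V\subset\tempdual$. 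Choosing $f$ small on a fixed $V\subset U$ forces $\mu_{\pl,p}(f)\le 1-\tfrac34\delta$ uniformly in $p$, so that after the shift, $g_p\ge X(\tfrac34\delta-\epsilon)-1$ on the compact core of $\hermdual\setminus U$ and $g_p\ge 1$ outside it (using properness of $h$); a uniform choice $X=4\delta^{-1}$ then gives (4). Without the uniform positivity of $\mu_{\pl,p}(V)$ there is no way to control the subtraction, and your scheme cannot deliver a $p$-uniform lower bound. Your sketch for the non-tempered region ("add a sum-of-squares term supported far out") is also unspecified and does not obviously achieve properness on $\hermdual$; Lemma \ref{lem: prpr} via Noether normalization is the clean way to do this. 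The ideas for (2), (3), (5) are fine, but the argument as it stands does not prove the proposition.
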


\begin{remark}
As can be seen from the application in Proposition \ref{prop: mubnd} later,
it is mainly the constant $A$ that influences the quality of our estimates. It would be interesting to find an explicit value for $A$ (for subsets $U_1$ and $U_2$ satisfying the conclusion of Lemma
\ref{lem: elem}).
\end{remark}

Before proving the proposition, we need a few auxiliary results.
The following lemma is necessary to take care of the non-tempered spectrum. For $G = \operatorname{PGL} (n)$ and $G = \operatorname{GL} (n)$ it is contained in \cite{1505.07285}*{Lemma 3.1}.

\begin{lemma} \label{lem: prpr}
There exists $h\in\invreg_{\pd}$ such that $\max_{\tempdual}h=1$ and the restriction of $h$
to $\hermdual$ defines a proper map
\[
h\rest_{\hermdual}:\hermdual\rightarrow\R_{\ge0}.
\]
\end{lemma}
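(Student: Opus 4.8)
The plan is to produce a single $W$-invariant regular function $h$ on $\dmst$ that is a sum of squares (hence in $\invreg_{\pd}$), is normalized so that $\max_{\tempdual}h=1$, and whose restriction to the hermitian part $\hermdual$ is proper onto $\R_{\ge0}$. The natural candidate is built from the coordinate functions. Concretely, for each fundamental (or at least a generating) set of weights $\mu\in X^*(\dmst)$, one has the $W$-symmetrized monomial $e_\mu\in\invreg$, and $e_\mu^*=e_{-\mu}$, so that $g_\mu:=e_\mu+e_{-\mu}\in\invreg_{\sa}$ and, after adding a constant, $2+g_\mu=(1+e_\mu)(1+e_\mu)^*+$ something — more cleanly, note $c+g_\mu\in\invreg_{\pd}$ for $c$ large enough since $\invreg_{\pd}$ is the cone generated by $x^*x$ and contains all sufficiently dominant real constants (as $1=\one^*\one$). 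The key point is to understand the behavior of such functions on $\hermdual$.

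First I would recall the structure of $\hermdual$: a point $t\in\dmst(\C)$ lies in $\hermdual$ iff $w(\bar t)=t^{-1}$ for some $w\in W$; writing $t=\exp(\xi+\iii\eta)$ with $\xi,\eta\in\aaa_0$ (via the identification of $\dmst(\C)$ with $(\C^\times)^{\rk}$), the condition forces $\eta$ into a finite union of lattice-cosets translated by a $W$-fixed part, while $\xi$ is constrained to lie in a union of $W$-translates of a linear subspace — the ``real'' hermitian directions. On $\tempdual$ one has $\xi=0$, so $\hermdual$ is, roughly, $\tempdual$ fattened in finitely many real one-parameter directions. Properness of $h|_{\hermdual}$ then amounts to showing $h(t)\to\infty$ as $t$ escapes to infinity within $\hermdual$, i.e. as $\norm{\xi}\to\infty$ along the allowed directions. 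For a single symmetrized monomial $g_\mu(t)=\sum_{w\in W}(t^{w\mu}+t^{-w\mu})/|W|$, on $\hermdual$ each term $t^{w\mu}+t^{-w\mu}=2\,\mathrm{Re}(t^{w\mu})$ has the form $2e^{\sprod{w\mu}{\xi'}}\cos(\cdots)$ for the appropriate $W$-conjugate $\xi'$ of $\xi$ — actually, because $t\in\hermdual$, $t^{w\mu}$ and $t^{-w\mu}$ are complex conjugates up to the Weyl twist, so the sum is real and equals $2\cosh(\sprod{w\mu}{\xi'})\cos(\text{imaginary part})$ — one must be slightly careful, but the upshot is that each $W$-orbit term is bounded below by something like $-C$ and grows like $e^{|\sprod{w\mu}{\xi}|}$ in the directions where $\sprod{w\mu}{\cdot}$ is nonzero. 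Summing over a set of weights $\mu$ whose $W$-translates span $\aaa_0^*$ guarantees that for any direction of escape in $\hermdual$, at least one term blows up exponentially while the others stay bounded below; adding a large constant and rescaling then gives a sum-of-squares function $h$ with $h|_{\hermdual}$ proper.

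The steps, in order: (1) Fix weights $\mu_1,\dots,\mu_k\in X^*(\dmst)$ whose $W$-orbits together span $\aaa_{0,\C}^*$ (e.g. the fundamental weights, or a single regular dominant weight suffices). (2) For each $i$ form $g_i=e_{\mu_i}+e_{-\mu_i}\in\invreg_{\sa}$ and observe that on $\hermdual$ one has $g_i(t)\ge -C_i$ for a uniform constant $C_i$, while for $t=\exp(\xi+\iii\eta)\in\hermdual$ one has $g_i(t)\ge \tfrac{2}{|W|}e^{|\sprod{\mu_i}{\xi_0}|}-C_i$ where $\xi_0$ is a suitable $W$-translate of $\xi$ realizing the max of $|\sprod{w\mu_i}{\xi}|$ over $w$; this is the main computational lemma. (3) Set $\tilde h=\sum_i g_i + C$ for $C=1+\sum_i C_i$; then $\tilde h\in\invreg_{\pd}$ (it is a nonnegative real constant plus self-adjoint elements lying above $-C$; more carefully, write each $C_i+g_i$ as a sum of squares using $C_i+g_i = \tfrac1{|W|}\sum_w|t^{w\mu_i}+1|^2 \cdot(\text{const})+\cdots$, or simply invoke that $\invreg_{\pd}$, being the cone generated by $x^*x$, is exactly the set of $h\in\invreg_{\sa}$ nonnegative on $\hermdual$ — this last fact needs a small argument via the spectral theorem / Positivstellensatz for the finitely-generated real algebra, which is where I expect the only genuine subtlety). (4) Deduce properness of $\tilde h|_{\hermdual}$ from step (2), since escape to infinity in $\hermdual$ forces $|\sprod{\mu_i}{\xi_0}|\to\infty$ for some $i$ (as the $W$-orbits span). (5) Finally set $h=\tilde h/\max_{\tempdual}\tilde h$, noting $\max_{\tempdual}\tilde h$ is a positive finite number since $\tempdual$ is compact and $\tilde h>0$ there; this $h$ satisfies all three requirements.

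The main obstacle is step (3), namely verifying that the function I build genuinely lies in $\invreg_{\pd}$ rather than merely being nonnegative on $\hermdual$. The clean route is to establish the identity $\invreg_{\pd}=\{h\in\invreg_{\sa}: h|_{\hermdual}\ge 0\}$: the inclusion $\subseteq$ is already noted in the excerpt, and $\supseteq$ can be obtained by realizing $\invreg$ as (a dense subalgebra of) the algebra of $W$-invariant continuous functions and $\hermdual$ as its ``real spectrum'' for the given $*$-structure, then applying a Positivstellensatz-type statement, or more hands-on by writing a nonnegative $W$-invariant Laurent polynomial as a sum of squares of $W$-invariant ones after clearing denominators (a Weyl-character-ring analogue of Fejér--Riesz). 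Alternatively — and this is probably cleaner for the paper — one avoids the abstract fact entirely by exhibiting $h$ explicitly as a manifest sum $\sum_j x_j^* x_j$: take $x_j$ to run over $1+e_{\mu_i}$ and its needed variants, expand, and check by direct computation that the cross terms combine into the desired $\sum g_i+C$ up to a positive factor. I would present the explicit sum-of-squares construction to sidestep the Positivstellensatz, keeping the argument elementary as the section's preamble promises.
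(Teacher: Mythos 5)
Your plan has a genuine gap at step (2), and it is not a repairable detail but the central computational claim. The assertion that $g_\mu := e_\mu + e_{-\mu}$ is bounded below on $\hermdual$ is false. Take $G$ of type $A_1$ (so $\dmst(\C)=\C^\times$, $W=\{1,s\}$ with $s(t)=t^{-1}$, and $\hermdual = S^1\cup\R^\times$). With $\mu$ the fundamental weight one gets $g_\mu(t)=t+t^{-1}$, which tends to $-\infty$ as $t\to-\infty$ along the real component of $\hermdual$. The pairing you invoke --- ``$t^{w\mu}$ and $t^{-w\mu}$ are complex conjugates up to the Weyl twist'' --- is not right: for $t\in\hermdual$ with $w_0\bar t = t^{-1}$, conjugation pairs $t^{\nu}$ with $t^{-w_0\nu}$, not with $t^{-\nu}$, so the orbit sum is only real, not bounded below. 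Consequently no amount of adding a constant makes $\sum_i g_i + C$ nonnegative on $\hermdual$, and the whole construction collapses. The secondary issue you flagged yourself --- whether $\invreg_{\pd}$ exhausts the self-adjoint elements that are nonnegative on $\hermdual$ --- is also a real obstacle (the Hilbert sums-of-squares phenomenon), and you should not expect a Positivstellensatz to come for free.

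The paper avoids both problems at once with a cleaner device. By Noether normalization (and, for $G$ adjoint, Bourbaki's theorem that $\dmst/W$ is literally affine space), one chooses regular functions $p_1,\dots,p_\rk\in\invreg$ so that $f=(p_1,\dots,p_\rk):\dmst/W\to\mathbf{A}^\rk$ is finite, hence proper. Then set $h=\sum_i p_i p_i^*$: this is \emph{manifestly} in $\invreg_{\pd}$ as a sum of terms of the form $x^*x$, so no positivity lemma is needed, and on $\hermdual$ one has $h=\sum_i\abs{p_i}^2$, whose properness is exactly the properness of $f$. In the $A_1$ example, $p=t+t^{-1}$ and $h=pp^*=(t+t^{-1})^2$, which is proper and nonnegative on $\R^\times\cup S^1$ --- precisely the square of the function your construction used unsquared. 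The moral: take $p_ip_i^*$, not $p_i+p_i^*$.
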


\begin{proof}
The variety $\dmst/W$ is affine and hence by the Noether normalization theorem, it admits
a finite (and in particular, proper) map
\[
f=(p_1,\dots,p_\rk):\dmst/W\rightarrow\mathbf{A}^\rk.
\]
In fact, if $G$ is adjoint then by \cite{MR0240238}*{Th\'eor\`eme VI.3.1}, $f$ may be taken to be an isomorphism and
in general, $f$ can be easily constructed from this case.

Take $h=\sum_{i=1}^rp_ip_i^*$.
Then $h\rest_{\hermdual}=\sum_{i=1}^\rk\abs{p_i}^2$ and therefore,
it is a proper map to $\R_{\ge0}$. Finally,
we may normalize $h$ so that $\max_{\tempdual}h=1$.
\end{proof}

\begin{lemma} \label{lem: aux2}
Let $C$ be a compact, $W$-invariant subset of $\hermdual$ and let $C_1$, $C_2$
be two disjoint, closed, $W$-invariant subsets of $C$.
Then, for every $\epsilon>0$ there exists $f\in\invreg_{\pd}$ such that
\begin{enumerate}
\item $f\rest_C\le1$.
\item $f\rest_{C_1}\le \epsilon$.
\item $f\rest_{C_2}\ge 1-\epsilon$.
\end{enumerate}
\end{lemma}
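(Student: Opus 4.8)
The plan is to deduce the lemma from the Stone--Weierstrass theorem, after first producing a single self-adjoint element of $\invreg$ that approximately separates $C_1$ from $C_2$, and then squaring and rescaling it.

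First I would record the relevant density statement. The restriction algebra $\invreg\rest_C$ is a subalgebra of the algebra of continuous complex-valued functions on the compact set $C$; it is stable under complex conjugation (by the identity $h^*(t)=\overline{h(t)}$ for $h\in\invreg$ and $t\in\hermdual$), it contains the constants, and it separates the $W$-orbits in $C$, the last point because $\dmst/W$ is an affine variety over $\C$ with coordinate ring $\invreg$, so that distinct $W$-orbits in $\dmst(\C)$ are separated by some element of $\invreg$. Passing to real and imaginary parts --- both of which lie in $\invreg_{\sa}$ and are real-valued on $\hermdual$, since there $*$ is complex conjugation --- one obtains that $\invreg_{\sa}\rest_C$ is a real subalgebra of the space of $W$-invariant continuous real-valued functions on $C$ which contains the constants and still separates $W$-orbits. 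By the Stone--Weierstrass theorem, in the form describing the closure of a subalgebra in terms of the equivalence relation it separates (here, ``lying in the same $W$-orbit''), $\invreg_{\sa}\rest_C$ is dense, in the supremum norm on $C$, in the whole space of $W$-invariant continuous real-valued functions on $C$.

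Next, since $C$ is compact Hausdorff and $C_1,C_2$ are disjoint and closed, Urysohn's lemma produces a continuous $g_0\colon C\to[0,1]$ with $g_0\rest_{C_1}\equiv0$ and $g_0\rest_{C_2}\equiv1$; replacing $g_0$ by its average over the finite group $W$ and using that $C_1$ and $C_2$ are $W$-invariant, I may assume $g_0$ is $W$-invariant. Now fix $\eta>0$ small enough that $\eta<1$, $\eta^2\le\epsilon$ and $\bigl((1-\eta)/(1+\eta)\bigr)^2\ge1-\epsilon$, and use the density statement to choose $g\in\invreg_{\sa}$ with $\sup_C\abs{g-g_0}<\eta$. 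Setting
\[
f=\bigl((1+\eta)^{-1}g\bigr)^*\bigl((1+\eta)^{-1}g\bigr)\in\invreg_{\pd},
\]
one has $f\rest_{\hermdual}=(1+\eta)^{-2}\abs{g}^2$, so that $\abs{g}\le1+\eta$ on $C$ gives $f\rest_C\le1$, $\abs{g}\le\eta$ on $C_1$ gives $f\rest_{C_1}\le\eta^2\le\epsilon$, and $\abs{g}\ge1-\eta$ on $C_2$ gives $f\rest_{C_2}\ge\bigl((1-\eta)/(1+\eta)\bigr)^2\ge1-\epsilon$. Thus $f$ has the required properties.

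No step here is genuinely hard; the only point that needs a little care is the identification, in the density statement, of the Stone--Weierstrass closure with \emph{all} $W$-invariant continuous real functions on $C$ --- equivalently, the fact that $\invreg_{\sa}\rest_C$ already separates exactly the $W$-orbits --- which is where the description of $\invreg$ as the coordinate ring of the affine quotient $\dmst/W$ is used.
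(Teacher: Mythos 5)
Your proof is correct and follows essentially the same route as the paper: apply the Stone--Weierstrass theorem to $\invreg_{\sa}\rest_C$ to approximate a Urysohn-type function separating $C_1$ from $C_2$, and then square (and rescale) the resulting self-adjoint element to land in $\invreg_{\pd}$. You are somewhat more explicit than the paper in handling the fact that the approximant need not take values exactly in $[0,1]$, but this is a minor bookkeeping point the paper leaves implicit.
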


\begin{proof}
By the Stone-Weierstrass theorem, $\invreg_{\sa}$ is dense in the space of continuous, $W$-invariant, real-valued functions on $C$.
Thus, there exists $h\in\invreg_{\sa}$ such that $h(C)\subset [0,1]$, $h(C_1)\subset[0,\epsilon]$ and $h(C_2)\subset [1-\frac12\epsilon,1]$.
We then take $f=h^2$.
\end{proof}

\begin{lemma} \label{lem: L1norm}
For any $h\in\invreg$ there exist $A,B>0$ such that $\norm{\Sat_p^{-1}h}_1\le Bp^A$ for all $p$.
\end{lemma}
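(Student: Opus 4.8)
The plan is to reduce to the case where $h$ is one of the basis elements $e_\lambda$, $\lambda \in X^*(\dmst)/W$, by linearity: if $h = \sum_{i} c_i e_{\lambda_i}$ is a finite sum, and we have bounds $\norm{\Sat_p^{-1} e_{\lambda_i}}_1 \le B_i p^{A_i}$ for each $i$, then $\norm{\Sat_p^{-1} h}_1 \le \sum_i \abs{c_i} B_i p^{A_i} \le B p^A$ with $A = \max_i A_i$ and $B = \sum_i \abs{c_i} B_i$. So it suffices to bound $\norm{\Sat_p^{-1} e_\lambda}_1$ for a fixed dominant coweight $\lambda$ (we may assume $\lambda$ is dominant, since $e_\lambda$ only depends on the $W$-orbit).

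Next I would use the structure of the Satake isomorphism recalled in \S\ref{sec: heckebasic}. By the discussion preceding \eqref{eq: suppSat}, the preimage $\Sat_p^{-1}(e_\lambda)$ is a linear combination $\sum_{\mu \le \lambda} d_{\lambda,\mu}(p)\, \one_{\K_p \mu^\vee(p) \K_p}$ of characteristic functions of double cosets indexed by dominant coweights $\mu \le \lambda$, where the sum is finite with a number of terms bounded independently of $p$ (it is bounded by the number of dominant coweights below $\lambda$, a quantity depending only on $G$ and $\lambda$). The $L^1$-norm of $\one_{\K_p \mu^\vee(p) \K_p}$ is exactly $\vol(\K_p \mu^\vee(p) \K_p) = [\K_p \mu^\vee(p)\K_p : \K_p]$, which by the standard formula (e.g.\ from Macdonald's work, \cite{MR0435301}) is a polynomial in $p$ of degree $\sprod{2\rhosymb}{\mu^\vee}$ with coefficients bounded in terms of $G$ alone; in particular it is $\ll_\lambda p^{\sprod{2\rhosymb}{\lambda^\vee}}$ uniformly in $p$ and $\mu \le \lambda$. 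The coefficients $d_{\lambda,\mu}(p)$ are, up to the $p$-powers from the change of normalization between the characteristic-function basis and the $e_\mu$ basis, given by inverting a unitriangular matrix of Kazhdan--Lusztig-type polynomials in $p$; these are again polynomials in $p$ of bounded degree and with bounded coefficients, uniformly in $p$. Combining these, $\norm{\Sat_p^{-1}(e_\lambda)}_1 \ll_\lambda p^{A}$ for a suitable $A = A(\lambda)$, which is what we want.

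The main obstacle is making the uniformity in $p$ completely clean: one must check that the constants implicit in Macdonald's volume formula for $\vol(\K_p \mu^\vee(p)\K_p)$, and in the transition matrix between the double-coset basis and the monomial-symmetric basis of $\invreg$, are genuinely independent of $p$ and depend only on $G$ (and on $\lambda$). This is standard but slightly delicate, since a priori these formulas involve the residue characteristic; the point is that all the relevant polynomials in $q = p$ have degree and coefficients determined by the root datum of $G$ alone. An alternative, perhaps cleaner route that avoids explicit formulas is to combine \eqref{eq: suppSat} with a crude volume bound: $\supp \Sat_p^{-1}(h) \subset \{x : \norm{x}_p \le p^a\}$, and this set is a finite union of $\K_p$-double cosets whose total volume is $\ll p^{a'}$ for some $a'$ depending only on $a$ and $G$ (since $\vol\K_p = 1$ and double cosets in a ball of radius $p^a$ have index bounded by a power of $p$ depending only on $G$ and $a$), while the sup-norm of $\Sat_p^{-1}(h)$ as a function on $G_p$ is likewise $\ll_h p^{a''}$; multiplying gives the desired $L^1$ bound. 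Either way the estimate is routine once the $p$-uniformity of the basic combinatorial data is granted.
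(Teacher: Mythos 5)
Your proposal is correct, but both routes you sketch differ from the paper's, which uses a slicker positivity trick. The paper works not with the monomial-symmetric basis $e_\lambda$ but with the basis $\{h_\lambda\}$ of irreducible characters of the dual group $\hat G$; by the Lusztig--Kato formula, the preimage $f_{\lambda,p} = \Sat_p^{-1}(h_\lambda)$ is a \emph{nonnegative} function on $G_p$, so that $\norm{f_{\lambda,p}}_1 = \int_{G_p} f_{\lambda,p}$ is the eigenvalue of $f_{\lambda,p}$ on the trivial representation, namely $h_\lambda(\rhosymb_p) \le B(\lambda) p^{\sprod{\lambda}{\rho}}$. Positivity thus converts the $L^1$-norm into a single evaluation and gives the explicit exponent recorded in the remark following the lemma. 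Your first route, inverting the transition matrix between the double-coset basis and the $e_\lambda$ basis, works in principle, but as you note it requires tracking the $p$-power normalizations and the boundedness of the Kostka--Foulkes-type coefficients. Your second, cruder route is cleaner and essentially self-contained; note moreover that the sup-norm bound you leave unspecified actually comes for free: since $\vol\K_p=1$, any bi-$\K_p$-invariant $f$ satisfies $\sup\abs{f}\le\norm{f}_2$, and $\norm{\Sat_p^{-1}h}_2^2=\mu_{\pl,p}(\abs{h}^2)\le\max_{\tempdual}\abs{h}^2$ is bounded independently of $p$. Combined with a polynomial-in-$p$ bound for the volume of the support coming from \eqref{eq: suppSat}, this proves the lemma, albeit with a less explicit exponent than the paper's argument.
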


\begin{proof}
Clearly, it is enough to prove the lemma for elements $h$ that form a basis for $\invreg$.
Let $\hat G$ be the dual group of $G$ (with maximal torus $\dmst$).
The restriction to $\dmst$ defines an isomorphism of algebras $\C[\hat G]^{\hat G}\rightarrow\invreg$.
We take the basis $\{h_\lambda:\lambda\in\Lambda_+\}$
formed by the traces of the irreducible rational representations of $\hat G$, indexed by their highest weight.
By the Lusztig--Kato formula (see e.g. \cite{MR2642451})
if $h_\lambda=\Sat_p f_{\lambda,p}$, then $f_{\lambda,p}\ge0$ as a function on $G_p$.
(Up to a power of $p$, the values of $f_{\lambda,p}$ are given by the values of certain Kazhdan--Lusztig polynomials at $p$.)
Hence, $\norm{f_{\lambda,p}}_1=h_\lambda(\rhosymb_p) \le B (\lambda)
p^{\sprod{\lambda}{\rho}}$.
The lemma follows.
\end{proof}

\begin{remark}
The proof yields the value $A = \max_{\lambda \in \Lambda^+: \, c_\lambda \neq 0} \sprod{\lambda}{\rho}$ for $h = \sum_{\lambda \in \Lambda^+} c_\lambda h_\lambda$.
\end{remark}

\begin{proof}[Proof of Proposition \ref{prop: tech}]
We may assume without loss of generality that $U$ consists of regular elements, so that $U$ is open in $\hermdual$.
Let $V\ne\emptyset$ be a $W$-invariant, open subset of $\tempdual$ such $V^{\cl}\subset U$.
By Lemma \ref{lem: fullsupp}, $\delta:=\inf_p\mu_{\pl,p}(V)>0$.
Let $X>0$ be a parameter, to be determined below.
Let $h$ be as in Lemma \ref{lem: prpr} and
let $C$ be a compact, $W$-invariant subset of $\hermdual$, containing $\tempdual$, such that $h>X+2$ outside $C$.
Let $f$ be as in Lemma \ref{lem: aux2} with $C_1=V^{\cl}$, $C_2=C\setminus U$ and $\epsilon=\frac14\delta\le\frac14$.
Clearly,
\[
\mu_{\pl,p}(f)\le 1-(1-\epsilon)\mu_{\pl,p}(V)\le 1-\frac34\delta.
\]
Set
\[
g_p=X(f-\mu_{\pl,p}(f))+h-\mu_{\pl,p}(h)\in\invreg_{\sa}
\]
and $\tau_p=\Sat_p^{-1}(g_p)\in(\Hecke_p)_{\sa}$.
Clearly, $\tau_p(e)=\mu_{\pl,p}(g_p)=0$. On the other hand, for any $x\in\hermdual\setminus U$ we have
\[
g_p(x)\ge\begin{cases}X(\frac34\delta-\epsilon)-1=\frac12X\delta-1,&\text{if }x\in C,\\
-X+X+2 -1=1,&\text{otherwise.}\end{cases}
\]
Thus, taking $X=4\delta^{-1}$ we get $g_p\ge1$ on $\hermdual\setminus U$.
It is also clear that
\[
\norm{\tau_p}_2^2=\mu_{\pl,p}(g_p^2)\le\max_{\tempdual}g_p^2\le (X+1)^2.
\]
Moreover, since $\norm{\tau_p}_1\le \norm{\Sat_p^{-1}(Xf+h)}_1+X+1$ we may apply Lemma \ref{lem: L1norm}
to infer the existence of constants $A,B$ such that $\norm{\tau_p}_1\le Bp^A$ for all $p$.
Finally, the support condition on $\tau_p$ follows immediately from \eqref{eq: suppSat}.
\end{proof}

\section{Archimedean test functions} \label{SectionArch}

Next, we recall the Paley--Wiener theorem for spherical functions, following Harish-Chandra.
See \cites{MR1790156, MR954385} for standard references.
Recall that $G$ denotes a Chevalley group defined over $\Q$ of rank $\rk$.
Let $\Phi$ be the set of roots of $(G,T_0)$ and $\Phi^+$ a fixed
subset of positive roots.
Let $\dm=\dim G-\dim\K_\infty$ be the dimension of the symmetric space
$G (\R) / \K_\infty$. The difference $\dm-\rk$ is the dimension of a maximal unipotent subgroup, i.e., the
number of positive roots of $G$.
The Killing form defines an inner product on $\aaa_0$, and hence on $\aaa_0^*$.
We endow $\aaa_0$ with the Lebesgue measure with respect to the Euclidean structure.

\subsection{} \label{sec: mesinfty}
Let $\Pspace$ denote the space of $W$-invariant Paley--Wiener functions on $\aaa_{0,\C}^*$.
Thus,
\[
\Pspace=\cup_{R>0}\PspaceR{R}
\]
where $\PspaceR{R}$ is the Fr\'echet space of $W$-invariant entire functions $f$ on $\aaa_{0,\C}^*$
such that
\[
\sup_{\lambda\in\aaa_{0,\C}^*}(1+\norm{\lambda})^ne^{-R\norm{\Re\lambda}}\abs{f(\lambda)}<\infty
\]
for all $n>0$.
The space $\Pspace$ is a commutative $*$-algebra under pointwise multiplication and the involution $h^*(\lambda)=\overline{h(-\bar\lambda)}$.
Moreover, the subspaces $\PspaceR{R}$, $R>0$ are invariant under ${}^*$
and $\PspaceR{R_1}\PspaceR{R_2}\subset\PspaceR{R_1+R_2}$ for all $R_1,R_2>0$.

For any $\lambda\in\aaa_{0,\C}^*$, the induced representation
$\Ind_{(P_0)_\infty}^{G_\infty}e^{\sprod{\lambda}{H_0(\cdot)}}$ admits a unique unramified irreducible subquotient
$\pi_\lambda$, which up to equivalence depends only on $W\lambda$.
The map $\lambda\mapsto\pi_\lambda$ defines a homeomorphism between $\aaa_{0,\C}^*/W$ and the unramified
dual of $G_\infty$. We denote by $\prm_\pi$ the $W$-orbit (or a representative thereof) corresponding to
an irreducible unramified representation $\pi$ of $G_\infty$.
Define
\begin{gather*}
\aaa_{0,\herm}^*=\cup_{w\in W}\{\lambda\in\aaa_{0,\C}^*:w\lambda=-\overline{\lambda}\},\\
\iii\aaa_0^*\subset\aaa_{0,\unt}^*=\{\lambda\in\aaa_{0,\C}^*:\pi_\lambda\text{ is unitarizable}\}\subset\aaa_{0,\herm}^*\cap
\{\lambda\in\aaa_{0,\C}^*:\norm{\Re\lambda}\le\norm{\rhosymb}\}.
\end{gather*}
We have $h(\lambda)\in\R$ (resp., $h(\lambda)\ge0$) for all $h\in\Pspace_{\sa}$ (resp., $h\in\Pspace_{\pd}$) and
$\lambda\in\aaa_{0,\herm}^*$.

Denote by $C_c^\infty(\aaa_0)^W$ the $*$-algebra of space of smooth, $W$-invariant, compactly supported functions on $\aaa_0$
under convolution, with $f^*(X)=\overline{f(-X)}$.
For any $R>0$ denote by $C_R^\infty(\aaa_0)^W$ the subspace of $C_c^\infty(\aaa_0)^W$ consisting
of functions supported on the ball $B_R$ of radius $R$ around $0$. It is a Fr\'echet space with respect to the usual topology.
The Fourier--Laplace transform
\[
f\mapsto \int_{\aaa_0}f(X)e^{\sprod\lambda X}\ dX
\]
defines for every $R>0$ an isomorphism of Fr\'echet spaces $C_R^\infty(\aaa_0)^W\rightarrow
\PspaceR{R}$, as well as a $*$-algebra isomorphism $C_c^\infty(\aaa_0)^W\rightarrow\Pspace$.

On the other hand, let
\[
\exp:\aaa_0\rightarrow \mst(\R)
\]
be the exponential map. Thus, the image of $\exp$ is the identity component of $\mst(\R)$ and
$\exp x\chi=\chi(e^x)$ for all $x\in\R$, $\chi\in X_*(\mst)$.
Fix the Haar measure on $U_0(\R)$ as in \cite{MR532745}*{p. 37} and take the Haar measure on $G_\infty$
such that
\[
\int_{G_\infty}f(g)\ dg=\int_{\K_\infty}\int_{U_0(\R)}\int_{\aaa_0}
f(\exp Xuk)\ dX\ du\ dk
\]
where the measure on $\K_\infty$ is normalized by $\vol(\K_\infty)=1$.
Let $\Test$ be the $*$-algebra of smooth, compactly supported, bi-$\K_\infty$-invariant
functions on $G_\infty$ with respect to convolution and with $f^*(g)=\overline{f(g^{-1})}$.
For each $R>0$ let $\TestR$ be the subspace of $\Test$ consisting
of those functions that are supported in $\K_\infty\exp B_R\K_\infty$.
The Harish-Chandra transform
\[
f\mapsto\modulus_0^{\frac12}(\exp X)\int_{U_0(\R)}f(\exp Xu)\ du
\]
defines a $*$-algebra isomorphism $\Test\rightarrow C_c^\infty(\aaa_0)^W$ which for any $R>0$, restricts
to an isomorphism of Fr\'echet spaces $\TestR\rightarrow C_R^\infty(\aaa_0)^W$.
Composing this with the Fourier--Laplace transform, we get a $*$-algebra isomorphism
\[
\sph:\Test\rightarrow\Pspace
\]
which restricts to isomorphisms of Fr\'echet spaces $\TestR\rightarrow\PspaceR{R}$ for all $R>0$.

Let $\beta$ be the Plancherel density. It is given by
\[
\beta(\lambda)=\abs{\cfunc(\lambda)\cfunc(\rhosymb)^{-1}}^{-2},\ \ \lambda\in\iii\aaa_0^*,
\]
where $\cfunc$ is Harish-Chandra's $\cfunc$-function.
More precisely, if
\[
\phi(s)=\frac{\Gamma_{\R}(s)}{\Gamma_{\R}(s+1)},\ \ \Gamma_{\R}(s)=\pi^{-s/2} \Gamma (\frac{s}{2}),
\]
then
\[
\cfunc(\lambda)=\prod_{\alpha \in\Phi^{+}} \phi(\sprod{\lambda}{\alpha^\vee}).
\]
Therefore we have the explicit formula
\[
\beta (\lambda)=\prod_{\alpha\in\Phi^{+}} \left[ \frac{\Gamma (\frac{\langle\rho,\alpha^\vee\rangle}{2})}{\Gamma (\frac{\langle\rho,\alpha^\vee\rangle+1}{2})} \right]^2\,
\prod_{\alpha\in\Phi^{+}}\frac{\langle\lambda,\alpha^\vee\rangle}2
\operatorname{tanh} (\frac{\pi \langle\lambda,\alpha^\vee\rangle}2).
\]

For any $f\in\Test$ we have
\[
f(e)=\int_{\iii\aaa_0^*}\sph f(\lambda)\beta(\lambda)\ d\lambda
\]
where the measure on $\iii\aaa_0^*$ is the dual to the one on $\aaa_0$.

Following Duistermaat--Kolk--Varadarajan \cite{MR532745},
it is useful to introduce the $W$-invariant function
\[
\tilde\beta(\lambda)=\prod_{\alpha\in\Phi^{+}}(1+\abs{\sprod{\lambda}{\alpha^\vee}}),\ \ \lambda\in\iii\aaa_0^*.
\]
We have
\begin{equation} \label{eq: obvs}
\beta(\lambda)\ll\tilde\beta(\lambda)\ll(1+\norm{\lambda})^{\dm-\rk},\ \ \lambda\in\iii\aaa_0^*
\end{equation}
and
\begin{equation} \label{eq: beta12}
\tilde\beta(\lambda_1+\lambda_2)\le\tilde\beta(\lambda_1)\tilde\beta(\lambda_2), \ \ \lambda_1,\lambda_2\in\iii\aaa_0^*.
\end{equation}

\subsection{}
Again following \cite{MR532745}, for any $g\in\Pspace$ and $\mu\in\iii\aaa_0^*$ define $g_\mu \in\Pspace$ by
\[
g_\mu (\lambda) = \frac1{\abs{W}}\sum_{w\in W}g(\lambda-w\mu).
\]
Clearly, $(g_\mu)^*=(g^*)_\mu$ and if $g\in\PspaceR{R}$ then $g_\mu\in\PspaceR{R}$.
In particular, if $g\in\Pspace_{\sa}$ then $g_\mu\in\Pspace_{\sa}$, although it is not
true in general that $g\in\Pspace_{\pd}$ implies $g_\mu\in\Pspace_{\pd}$.

The main feature of $g_\mu$ is that it localizes near $W\mu$. More precisely, for every $n,R>0$ we have
\begin{equation} \label{eq: bndgmu}
\abs{g_\mu(\lambda)}\ll_{n,g,R} (1+\min_{w\in W}\norm{\lambda-w\mu})^{-n}
\ \ \forall\lambda\in\aaa_{0,\C}^*\text{ such that }\norm{\Re\lambda}\le R.
\end{equation}
If $\sph f=g$ then we define $f_\mu=\sph^{-1}(g_\mu)\in\Test$.

\begin{definition} \label{def: special}
\begin{enumerate}
\item
We say that a function $g\in\Pspace$ has a \emph{small derivative} if
\[
\sup_{\norm{\Re\lambda}\le\norm{\rhosymb}}\norm{\nabla g(\lambda)}=
\sup_{\norm{\Re\lambda}\le\norm{\rhosymb}}
\sup_{z\in\aaa_{0,\C}^*:\norm{z}=1}\abs{\partial_zg(\lambda)}\le\frac1{\sqrt{\norm{\rhosymb}^2+1}}.
\]
\item We say that $g\in\Pspace_{\pd}$ (and likewise, $\sph^{-1}g\in\Test_{\pd}$) is \emph{special} if $g(0)=2\abs{W}$
and $g$ has a small derivative.
\end{enumerate}
\end{definition}

It is easy to see that special Paley--Wiener functions exist.
Simply take any $g\in\Pspace_{\pd}$ such that $g(0)=2\abs{W}$ and consider the function $g(t\cdot)$
for $t>0$ sufficiently small.

\begin{lemma}
\begin{enumerate}
\item
If $g$ has a small derivative, then the same is true for $g_\mu$ for any $\mu\in\iii\aaa_0^*$,.
\item Let $g$ be special. Then
\begin{multline} \label{eq: bndspcl}
\text{$\abs{g_\mu(\lambda)}\ge1\ \ \forall\lambda\in\aaa_{0,\C}^*$ such that
$\norm{\Re\lambda}\le\norm{\rhosymb}$ and $\norm{\Im(\lambda-\mu)}\le 1$.}
\end{multline}
\end{enumerate}
\end{lemma}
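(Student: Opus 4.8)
The two assertions are essentially forced by the definitions, so the plan is short and there is no serious obstacle; the work is bookkeeping of constants.

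\textbf{Part (1).} Differentiation commutes with the finite average defining $g_\mu$, so $\partial_z g_\mu(\lambda)=\frac{1}{\abs{W}}\sum_{w\in W}\partial_z g(\lambda-w\mu)$ for every $z\in\aaa_{0,\C}^*$. Since $\mu\in\iii\aaa_0^*$ is purely imaginary, $\Re(\lambda-w\mu)=\Re\lambda$ for all $w\in W$; hence whenever $\norm{\Re\lambda}\le\norm{\rhosymb}$ each summand obeys the small-derivative bound of Definition \ref{def: special}. Taking $\norm z=1$, the triangle inequality gives $\abs{\partial_z g_\mu(\lambda)}\le\frac{1}{\abs W}\sum_w\abs{\partial_z g(\lambda-w\mu)}\le(\norm{\rhosymb}^2+1)^{-1/2}$, and taking the supremum over such $z$ shows that $g_\mu$ has a small derivative.

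\textbf{Part (2), anchoring at $\mu$.} First I would evaluate $g_\mu$ at $\lambda=\mu$. By $W$-invariance of $g$, $g_\mu(\mu)=\frac{1}{\abs W}\sum_{w\in W}g(\mu-w\mu)$. Each argument $\mu-w\mu$ lies in the real subspace $\iii\aaa_0^*\subset\aaa_{0,\herm}^*$, so $g(\mu-w\mu)\ge0$ because $g\in\Pspace_{\pd}$. Discarding all terms except $w=e$ and using $g(0)=2\abs W$ gives $g_\mu(\mu)\ge\frac{1}{\abs W}g(0)=2$; in particular $g_\mu(\mu)$ is real and positive.

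\textbf{Part (2), propagation.} Now fix $\lambda$ with $\norm{\Re\lambda}\le\norm{\rhosymb}$ and $\norm{\Im(\lambda-\mu)}\le1$, and join $\mu$ to $\lambda$ by the segment $\gamma(t)=\mu+t(\lambda-\mu)$, $t\in[0,1]$. Since $\Re\mu=0$ we have $\Re\gamma(t)=t\Re\lambda$, so $\norm{\Re\gamma(t)}\le\norm{\Re\lambda}\le\norm{\rhosymb}$ along the whole segment, placing it in the region where, by part (1), $g_\mu$ has a small derivative. Writing $\lambda-\mu=\Re\lambda+\iii\Im(\lambda-\mu)$ gives $\norm{\lambda-\mu}^2=\norm{\Re\lambda}^2+\norm{\Im(\lambda-\mu)}^2\le\norm{\rhosymb}^2+1$. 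By the fundamental theorem of calculus along $\gamma$,
\[
g_\mu(\lambda)-g_\mu(\mu)=\int_0^1\partial_{\lambda-\mu}g_\mu(\gamma(t))\,dt,
\]
and bounding the integrand by $\norm{\lambda-\mu}\,\norm{\nabla g_\mu(\gamma(t))}$ via $\C$-linearity of the directional derivative yields $\abs{g_\mu(\lambda)-g_\mu(\mu)}\le\norm{\lambda-\mu}\sup_{t\in[0,1]}\norm{\nabla g_\mu(\gamma(t))}\le(\norm{\rhosymb}^2+1)^{1/2}(\norm{\rhosymb}^2+1)^{-1/2}=1$. Combined with $g_\mu(\mu)\ge2$ and the reverse triangle inequality, this gives $\abs{g_\mu(\lambda)}\ge g_\mu(\mu)-1\ge1$, which is \eqref{eq: bndspcl}. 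The only thing to watch is that the constant $(\norm{\rhosymb}^2+1)^{-1/2}$ in the definition of ``small derivative'' and the normalization $g(0)=2\abs W$ are chosen precisely so that the length bound $\norm{\lambda-\mu}\le(\norm{\rhosymb}^2+1)^{1/2}$ on the diagonal segment forces a deviation of at most $1$, leaving exactly the required margin.
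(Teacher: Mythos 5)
Your proof is correct and follows essentially the same route as the paper: both parts hinge on the observation that $\Re(\lambda-w\mu)=\Re\lambda$ for purely imaginary $\mu$, the anchoring estimate $g_\mu(\mu)\ge g(0)/\abs{W}=2$ via positivity of $g$ on $\iii\aaa_0^*$, and a mean value (fundamental theorem of calculus) bound $\abs{g_\mu(\lambda)-g_\mu(\mu)}\le\norm{\lambda-\mu}\sup\norm{\nabla g_\mu}\le1$ on the strip $\norm{\Re\lambda}\le\norm{\rhosymb}$. The extra step of checking explicitly that the segment $\gamma(t)$ stays inside the strip is a harmless (and welcome) clarification of a point the paper leaves implicit.
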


\begin{proof}
The first part is clear. Suppose that $g$ is special.
Then $g_\mu(\mu)\ge \frac{g(0)}{\abs{W}}\ge2$ and by the mean value theorem,
for any $\lambda\in\aaa_{0,\C}^*$ such that $\norm{\Re\lambda}\le\norm{\rhosymb}$ we have
\begin{multline*}
\abs{g_\mu (\lambda) - g_\mu (\mu)}\le\sqrt{\norm{\Re\lambda}^2 + \norm{\Im(\lambda-\mu)}^2}
\sup_{\norm{\Re\lambda}\le\norm{\rhosymb}}\norm{\nabla g_\mu(\lambda)}
\\\le\sqrt{\frac{\norm{\Re\lambda}^2 + \norm{\Im(\lambda-\mu)}^2}{\norm{\rhosymb}^2+1}}
\end{multline*}
since $g$ has a small derivative. The second part follows.
\end{proof}

\section{Local bounds and Weyl's law for the cuspidal spectrum} \label{sec: Finis-Matz} \label{SectionMainResults}
In this section we will prove the main result of the paper, namely, the Weyl law with remainder for the cuspidal spectrum.
Roughly speaking, the Weyl law is essentially the statement that for suitable test functions,
the main term on the spectral (resp., geometric) side of the trace formula is the contribution of the cuspidal spectrum (resp., of the central elements).
These properties have to be formulated precisely and quantitatively.

We go back to the general setup of Arthur's trace formula in \S \ref{SectionArthur}, except for
our running assumption that $G$ is a simple Chevalley group defined over $\Q$.
We take the product Haar measure on $G(\A)$, where the Haar measures on $G_p$ and $G_\infty$ are as
in \S\ref{sec: padicmes} and \S\ref{sec: mesinfty}, respectively.
For the remainder of the paper we fix a compact open subgroup $K$ of $G(\A_{\fin})$ and let
\[
v_K=\abs{Z(\Q)\cap K}\frac{\vol (G (\Q) \backslash G (\A))}{\vol K}.
\]
Let $\bad=\bad_K$ be the finite set of (finite) primes $p$ such that $\K_p\not\subset K$.
Thus, $K=K_\bad\K^{\bad}$ where $K_\bad=K\cap G(\Q_\bad)$.
Let $\one_K$ be the idemponent corresponding to $K$, i.e., the characteristic function of $K$ normalized by
$(\vol K)^{-1}$, viewed as a smooth function on $G(\Q_{\bad})$.

Consider $\Hecke^S=\otimes_{p\notin S}\Hecke_p$.
For any $f\in\Hecke^S$ and an admissible representation $\pi^S$ of $G(\A^S)$ such that
$\pi^{\K^S}$ is one-dimensional, we denote by $\hat f(\pi^S)$ the scalar
by which $f$ acts on $\pi^{\K^S}$.
We set for any $h\in\Hecke^S$
\begin{equation} \label{def: ms}
\maxsupp(h)=\sup_{x\in G(\A^S):h(x)\ne0}\log\norm{x},
\end{equation}
interpreted as $0$ if $h=0$.

Let $f\in \TestR$, $h\in\Hecke^S$ and $T\in\aaa_0^{\pls}$ and consider $J^T(f\otimes \one_K\otimes h)$ as introduced in \eqref{DefinitionJT}.
By \eqref{eq: polyJ}, we have
\begin{equation} \label{EqnGeometricSpectral}
J^T(f\otimes \one_K\otimes h) =
\mathbb{J}^T(f\otimes \one_K\otimes h)
\quad
\text{for $\regd(T)>C_1(1+\max_{x\in K}\log\norm{x}+R+\maxsupp(h))$.}
\end{equation}

On the geometric side we have the following estimate for
the polynomial function $\mathbb{J}^T(f\otimes \one_K\otimes h)$
\cite{1905.09078}*{Theorem 3.7}.\footnote{In [ibid.], the theorem is stated only for the constant term of the polynomial $\mathbb{J}^T(f\otimes \one_K\otimes h)$. However, the proof yields the full statement of Theorem \ref{thm: finmatz}.}
As in \cite{1905.09078}*{(2.2), (3.1)} set
\[
D ( \lambda) = \min_{M \in \Levis, \, M \neq G}
\prod_{\alpha \in \Phi^+ \bs \Phi^{M,+}} (1 + \abs{\sprod{\lambda}{\alpha^\vee}})^{\frac12},
\]
for classical $G$, and
\[
D ( \lambda) = \frac{1}{\log (2 + \norm{\lambda})} \min_{\substack{M \in \Levis, \\ M \neq G}}
\max_{\substack{S \subset \Phi^+ \bs \Phi^{M,+}, \\ \abs{S} = 2\rk}}
\prod_{\alpha \in S} (1 + \abs{\sprod{\lambda}{\alpha^\vee}})^{\frac12}
\]
for exceptional $G$.
For a qualitative result it is only important that
\begin{equation}
(1 + \norm{\lambda})^{\frac12} \ll D (\lambda) \ll (1 + \norm{\lambda})^{\rk}.
\end{equation}

\begin{theorem}[Finis--Matz] \label{thm: finmatz}
For any $f\in \TestR$, $h\in\Hecke^S$ and $T\in\aaa_0^{\pls}$ we have
\begin{multline*}
\abs{\mathbb{J}^T(f\otimes \one_K\otimes h)-\tilde v_Kh(e)f(e)}\ll_{K, R}\\
\norm{h}_1(1+\norm{T})^\rk \int_{\iii\aaa_0^*}\abs{\sph f(\lambda)}\frac{\beta(\lambda)}{D(\lambda)} \log (2 +  \norm{\lambda})^\rk d\lambda.
\end{multline*}
\end{theorem}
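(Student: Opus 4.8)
The plan is to deduce this from \cite{1905.09078}*{Theorem 3.7}, the two points needing attention being the structural reason for the shape of the right-hand side and the passage from the constant term of the polynomial $T\mapsto\mathbb{J}^T$ (which is all that is stated in loc. cit.) to the whole polynomial. We would start from Arthur's coarse geometric expansion $\mathbb{J}^T(\phi)=\sum_{\mathfrak{o}}\mathbb{J}^T_{\mathfrak{o}}(\phi)$ for $\phi=f\otimes\one_K\otimes h$, indexed by semisimple $G(\Q)$-conjugacy classes $\mathfrak{o}$. The central classes contribute the translates of the unipotent distribution $\mathbb{J}^T_{\operatorname{unip}}$ by the finitely many $z\in Z(\Q)$ that can meet $\supp\phi$ --- at the places in $S$ this forces $z\in K$, while at all other finite places $z$ is automatically $\K_p$-integral --- and the part of this coming from the trivial unipotent orbit is $T$-independent and, summed over $z\in Z(\Q)\cap K$, equals precisely the main term $\tilde v_Kh(e)f(e)$ (here one uses that $G$, being simple, is semisimple so that $G(\A)^1=G(\A)$, together with the bi-invariance of $f$ and $h$ under the relevant maximal compact subgroups). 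Everything else --- the non-trivial unipotent orbits inside the central classes and all the non-central $\mathfrak{o}$ --- is the error term, which we would bound by writing each $\mathbb{J}^T_{\mathfrak{o}}$ via the fine geometric expansion as a polynomial in $T$ whose coefficients are sums over $M\in\Levis$ of weighted orbital integrals $J_M^T(\gamma,\phi)$.

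For these weighted orbital integrals there are two inputs. The archimedean one is Matz's estimate \cite{MR2801400}: it bounds each weighted orbital integral of $f\in\TestR$ by an expression of the form $\int_{\iii\aaa_0^*}\abs{\sph f(\lambda)}\,\beta(\lambda)\,D(\lambda)^{-1}\log(2+\norm{\lambda})^\rk\,d\lambda$, the decisive feature being the gain $D(\lambda)^{-1}$ over the value $f(e)=\int_{\iii\aaa_0^*}\sph f(\lambda)\beta(\lambda)\,d\lambda$ attached to the identity; this gain reflects the fact that orbital integrals at an element whose connected centralizer is a proper Levi subgroup $M$ are smaller, which is precisely why $D$ is a minimum over $M\in\Levis$ with $M\neq G$, and the rapid decay of $\sph f$ is what makes the integral converge (note that $\beta(\lambda)/D(\lambda)$ itself still grows polynomially in $\norm{\lambda}$). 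The non-archimedean and global input comes from \cite{MR3534542} and \cite{1905.09078}: one must count the rational conjugacy classes meeting a fixed compact set, each weighted by its $p$-adic orbital integral against $\one_K\otimes h$, and show that the total is $\ll\norm{h}_1$ with an implied constant \emph{uniform in $p$ and in $h$} --- the uniformity in the support of $h$ being the delicate structural point, resting on the fact that the Cartan decomposition ties the support of $h$ directly to the set of integral classes that can contribute, and that the weighted orbital integrals are linear in $h$ with coefficients controlled in terms of $K$ alone. Combining the archimedean and non-archimedean bounds over the finitely many places in $S$ and summing the resulting geometric series over Levi subgroups gives the asserted bound for the constant term; for the full polynomial one observes that $T\mapsto\mathbb{J}^T(\phi)$ has degree at most $\rk=\dim\aaa_0^G$ and that each coefficient satisfies the same estimate (being a geometric distribution of the same type, with Arthur's weight functions replaced by polynomials in $T$ of strictly smaller degree), so that evaluation at $T\in\aaa_0^{\pls}$ costs at most the factor $(1+\norm{T})^\rk$.

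The hardest part is the uniform control of the non-central semisimple contributions $\sum_{\mathfrak{o}\ \text{non-central}}\abs{\mathbb{J}^T_{\mathfrak{o}}(\phi)}$, simultaneously in all primes outside $S$, in $h$, and in $T$: this demands the lattice-point count of rational classes meeting a fixed compact set weighted by $p$-adic orbital integrals, the archimedean weighted-orbital-integral bound with the $D(\lambda)^{-1}$ gain (without which the $\lambda$-integral would diverge), and a careful bookkeeping ensuring that the final constant depends only on $K$ and $R$. This is precisely the content of \cite{1905.09078} (building on \cites{MR2801400,MR3534542}), which we would invoke rather than reproduce.
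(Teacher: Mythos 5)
The paper does not prove this statement: it is quoted directly from \cite{1905.09078}*{Theorem 3.7}, with a footnote remarking that while the reference states the bound only for the constant term of the polynomial $T\mapsto\mathbb{J}^T(f\otimes\one_K\otimes h)$, the proof there in fact yields the bound for the full polynomial, which is the form needed here. Your proposal does essentially the same thing --- a plausible reconstruction of the Finis--Matz argument followed by an invocation of the reference --- so in substance it agrees with the paper, and your reasoning for the polynomial degree in $T$ and the factor $(1+\norm{T})^\rk$ matches the footnote. One correction to the sketch: the archimedean weighted-orbital-integral estimate with the $D(\lambda)^{-1}$ gain is established in \cite{1905.09078} itself; \cite{MR2801400} is Finis--Lapid (not Matz) and concerns the continuity of the semisimple terms of the trace formula, giving only polynomial-type control without the power saving, as does \cite{MR3534542} for the full geometric side. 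The quantitative $D(\lambda)$ saving is the new ingredient of the Finis--Matz paper. Whether that reference organizes the argument via the coarse-then-fine geometric expansion as you describe, or works more directly with the modified kernel $k^T$ and the bounds of \cite{MR3534542}, is a detail internal to \cite{1905.09078}; the present paper simply treats the whole statement as a black box.
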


By \eqref{eq: obvs}, \eqref{eq: beta12} and \eqref{eq: bndgmu} we conclude
(cf. \cite{1905.09078}*{Corollary 3.8}):

\begin{corollary} \label{cor: finmatz}
For any $f\in\Test$
and either $F= f_\mu$ or $F = f_\mu*f^*_\mu$
we have
\[
\abs{\mathbb{J}^T(F\otimes\one_K\otimes h)-\tilde v_Kh(e)F(e)}\ll_{f,K}
\norm{h}_1(1+\norm{T})^\rk\log(2+\norm{\mu})^\rk\frac{\tilde\beta(\mu)}{D(\mu)}
\]
for all $h\in\Hecke^S$, $\mu\in\iii\aaa_0^*$ and $T\in\aaa_0^{\pls}$.
\end{corollary}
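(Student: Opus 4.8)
The plan is to deduce Corollary \ref{cor: finmatz} from Theorem \ref{thm: finmatz} simply by substituting $F=f_\mu$ (respectively $F=f_\mu*f_\mu^*$) and estimating the right-hand side integral uniformly in $\mu$. First I would record that $\sph(f_\mu)=(\sph f)_\mu=g_\mu$ where $g=\sph f$, and $\sph(f_\mu*f_\mu^*)=g_\mu g_\mu^*=\abs{g_\mu}^2$ on $\iii\aaa_0^*$; in both cases $F\in\TestR$ for the same $R$ for which $f\in\TestR$, since the operations $g\mapsto g_\mu$ and $g\mapsto gg^*$ preserve $\PspaceR{R}$ (using $\PspaceR{R}\PspaceR{R}\subset\PspaceR{2R}$ one gets $2R$ in the second case, but $R$ depends only on $f$, so the implied constant may be allowed to depend on $f$). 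Consequently the implicit constant $\ll_{K,R}$ in Theorem \ref{thm: finmatz} becomes $\ll_{f,K}$, and $\norm{h}_1(1+\norm{T})^\rk$ is carried along unchanged.

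The substance is the bound on the spectral integral. For $F=f_\mu$ I would use \eqref{eq: bndgmu}: for every $n$, $\abs{g_\mu(\lambda)}\ll_{n,g}(1+\min_{w\in W}\norm{\lambda-w\mu})^{-n}$ on $\iii\aaa_0^*$. Combining this with $\beta(\lambda)\ll\tilde\beta(\lambda)$, the submultiplicativity $\tilde\beta(\lambda)\le\tilde\beta(\lambda-w\mu)\tilde\beta(w\mu)=\tilde\beta(\lambda-w\mu)\tilde\beta(\mu)$ from \eqref{eq: beta12} (choosing $w$ to realize the minimum and using $W$-invariance of $\tilde\beta$), and the crude polynomial bound $\log(2+\norm{\lambda})^\rk/D(\lambda)\ll\log(2+\norm{\lambda})^\rk$ together with $\log(2+\norm{\lambda})\ll\log(2+\norm{\mu})(1+\norm{\lambda-w\mu})$, one gets
\[
\abs{\sph f(\lambda)}\frac{\beta(\lambda)}{D(\lambda)}\log(2+\norm{\lambda})^\rk
\ll_{f}\tilde\beta(\mu)\log(2+\norm{\mu})^\rk\,(1+\norm{\lambda-w\mu})^{-n+\rk+\dm},
\]
and choosing $n$ large enough that $-n+\rk+\dm<-\dim\aaa_0^*$ makes the $\lambda$-integral over $\iii\aaa_0^*$ converge to a finite constant depending only on $f$. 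Strictly speaking I should also insert the denominator $D(\mu)$: since $D$ has the polynomial lower and upper bounds displayed before Theorem \ref{thm: finmatz}, one can afford to write the final bound with $\tilde\beta(\mu)/D(\mu)$ (enlarging $n$ by $\rk$ if one wants to be scrupulous about replacing $D(\lambda)^{-1}$ by $D(\mu)^{-1}$ via $D(\lambda)\gg(1+\norm\lambda)^{1/2}$ and $D(\mu)\ll(1+\norm\mu)^\rk$ and the localization). For $F=f_\mu*f_\mu^*$ the argument is identical with $\abs{g_\mu(\lambda)}^2$ in place of $\abs{g_\mu(\lambda)}$; the extra decay only helps, so the same $n$ works.

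The one point requiring a little care — and the main (mild) obstacle — is the uniform handling of the factor $\log(2+\norm\lambda)^\rk/D(\lambda)$ and its replacement by the $\mu$-dependent quantity $\log(2+\norm\mu)^\rk\tilde\beta(\mu)/D(\mu)$: one must check that the logarithmic and $D$-factors, evaluated at $\lambda$, do not grow faster than a fixed power of $(1+\norm{\lambda-w\mu})$ times their values at $\mu$, which is exactly what the two-sided polynomial bounds on $D$ and the inequality $\log(2+xy)\le\log(2+x)\log(2+y)+\log 2$ provide, at the cost of raising the exponent $n$ in \eqref{eq: bndgmu} by a bounded amount. Since $n$ is arbitrary this costs nothing. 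Everything else — the identification of $\sph F$, the measure normalization on $\iii\aaa_0^*$, and the passage from $\ll_{K,R}$ to $\ll_{f,K}$ — is bookkeeping, and the final estimate is precisely the assertion of Corollary \ref{cor: finmatz}.
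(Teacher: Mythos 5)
Your high-level route is the right one — substitute $F$ into Theorem \ref{thm: finmatz}, localize the spectral integral near $W\mu$ via \eqref{eq: bndgmu}, and transport the weight factors from $\lambda$ to $\mu$ using \eqref{eq: beta12} for $\tilde\beta$ together with a Lipschitz-type estimate for the logarithm. The bookkeeping about $\sph(f_\mu)=(\sph f)_\mu$, $\sph(f_\mu*f_\mu^*)=|g_\mu|^2$, and the passage from $\ll_{K,R}$ to $\ll_{f,K}$ is all fine.

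The gap is in your treatment of the factor $D(\lambda)^{-1}$. You claim that the two-sided polynomial bounds $(1+\norm{\lambda})^{1/2}\ll D(\lambda)\ll(1+\norm{\lambda})^{\rk}$ suffice "at the cost of raising the exponent $n$," but they do not. For $\lambda$ near $W\mu$, those bounds only give $D(\mu)/D(\lambda)\ll(1+\norm{\mu})^{\rk}/(1+\norm{\lambda})^{1/2}\asymp(1+\norm{\mu})^{\rk-1/2}$, which is a growing function of $\norm{\mu}$ and \emph{not} a polynomial in $\norm{\lambda-w\mu}$; so no amount of enlarging $n$ in \eqref{eq: bndgmu} will absorb it. Without a better bound you end up with $\tilde\beta(\mu)\log(2+\norm{\mu})^{\rk}$ on the right, which is strictly weaker than the stated $\tilde\beta(\mu)\log(2+\norm{\mu})^{\rk}/D(\mu)$ whenever $\rk>1$. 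What is actually needed — and what the explicit formula for $D$ provides — is a pointwise submultiplicativity: since $D$ is $W$-invariant and is a min (or min-max) over Levis of products of factors $(1+\abs{\sprod{\cdot}{\alpha^\vee}})^{1/2}$, the elementary inequality $(1+\abs{\sprod{\mu}{\alpha^\vee}})\le(1+\abs{\sprod{\lambda}{\alpha^\vee}})(1+\abs{\sprod{\mu-\lambda}{\alpha^\vee}})$ gives $D(\mu)\ll D(\lambda)(1+\norm{\lambda-w\mu})^{C}$ for some fixed $C$ and the $w\in W$ realizing $\min_{w'}\norm{\lambda-w'\mu}$ (with an extra harmless $\log$ factor in the exceptional case). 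Inserting this in place of the crude two-sided bounds repairs the argument; with that change your proof matches the one the paper has in mind (via [Finis--Matz, Cor.\ 3.8]).
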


Turning to the spectral side, for any $M\in\Levis$ denote by $\Pi_2(M)^{\K_\infty K}$ the subset of $\Pi_2(M)$
consisting of the representations such that
$I_P(\pi,0)$ admits a non-zero $\K_\infty K$-invariant vector for any (or equivalently, all) $P\in\pars(M)$.
In particular, if $\pi=\otimes_{p\le\infty}\pi_p\in\Pi_2(M)^{\K_\infty K}$, then $\pi_p$ is unramified
for all $p\notin\bad$ (including $p=\infty$).
As in \S \ref{SectionArthur}, for any $f\in\Test$, $h\in\Hecke^{\bad}$ and $T\in\aaa_0^{\pls}$ we have
\[
J^T(f\otimes\one_K\otimes h)=\sum_{P\in\pars/\sim}J_P^T(f\otimes\one_K\otimes h),
\]
where
\begin{multline} \label{eq: explct}
J^T_P(f\otimes\one_K\otimes h)=\abs{W(M)}^{-1}\\
\sum_{\pi\in\Pi_2(M)^{\K_\infty K}}\int_{\iii\aaa_M^*}\sph f(\prm_{\pi_\infty}+\lambda)
\hat h(I_P(\pi^\bad,\lambda))\sum_{\varphi\in\ortho_P(\pi)^{\K_\infty K}}\norm{\Lambda^TE_P(\cdot,\varphi,\lambda)}_2^2\ d\lambda
\end{multline}
for $P\in\pars(M)$.
Here $\prm_{\pi_\infty}\in(\aaa_0^M)^*_{\C}/W_M$ is the archimedean parameter of $\pi$ and
$\ortho_P(\pi)^{\K_\infty K}$ is an orthonormal basis of the finite-dimensional space $(\AF_{P,\pi}^2)^{\K_\infty K}$
(the $\K_\infty K$-invariant part of $\AF_{P,\pi}^2$).

Let $\nu_P^{T,K}$ be the Radon measure on $\iii\aaa_0^*/W$ given by
\begin{multline*}
\nu_P^{T,K}(f)=\abs{W(M)}^{-1}\\\sum_{\pi\in\Pi_2(M)^{\K_\infty K}}\int_{\iii\aaa_M^*}
f(\Im\prm_{\pi_\infty}+\lambda)
\sum_{\varphi\in\ortho_P(\pi)^{\K_\infty K}}\norm{\Lambda^TE_P(\cdot,\varphi,\lambda)}_2^2\ d\lambda,\ \ f\in C_c(\iii\aaa_0^*)^W,
\end{multline*}
and let
\[
\nu^{T,K}=\sum_{P\in\pars/\sim}\nu^{T,K}_P.
\]
It is clear from \eqref{eq: mntrnc} that
\begin{equation} \label{eq: mntn}
\nu^{T_1,K}\le\nu^{T_2,K}\text{ if }T_1\le T_2.
\end{equation}
We also introduce the Radon measures
\begin{gather*}
\nu_{\cusp}^K=\sum_{\pi\in\Pi_{\cusp}(G)}\dim((\AF_G^{\cusp})_{\pi}^{\K_\infty K})\delta_{\Im\prm_{\pi_\infty}},\\
\nu_{\resid}^{T,K}=\nu_G^{T,K}-\nu_{\cusp}^K,\\
\nu_{\noncusp}^{T,K}=\nu^{T,K}-\nu_{\cusp}^K =
\sum_{P\in\pars, \, P \neq G /\sim}\nu^{T,K}_P + \nu_{\resid}^{T,K}.
\end{gather*}
Let
\[
\mult_{\cusp}^K(A)=\sum_{\pi\in\Pi_{\cusp}(G)^{\K_\infty K}:\prm_{\pi_\infty}\in A}\dim(\AF^G_{\cusp,\pi})^{\K_\infty K}
\]
for any bounded, $W$-invariant subset $A$ of $\aaa_{0,\C}^*$, and
note that
\[
\nu_{\cusp}^K(A)=\mult^K_{\cusp}(A+\aaa_0^*),\ \ A\subset\iii\aaa_0^*.
\]
Finally, we decompose the Radon measure $\nu_{\cusp}^K$ as the sum of two Radon measures
\[
\nu_{\cusp}^K=\nu_{\cusp,\temp}^K+\nu_{\cusp,\nt}^K
\]
(the tempered and non-tempered part of $\nu_{\cusp}^K$) where
\[
\nu_{\cusp,\temp}^K(A)=\mult^K_{\cusp}(A),\ \ \ A\subset\iii\aaa_0^*.
\]

For any $\mu\in\iii\aaa_0^*$ and $R\ge0$ denote by $\ball_R(\mu)$ the ball of radius $R$ around
$\mu$ in $\iii\aaa_0^*$.

The case $h=1$ of Corollary \ref{cor: finmatz} is already sufficient to give the following qualitative local bounds.
Let
\[
\reg(\mu)=\min_{1\ne w\in W}\norm{w\mu-\mu}=\min_{\alpha>0}\abs{\sprod{\mu}{\alpha^\vee}}\norm{\alpha}.
\]

\begin{lemma} \label{lem: bndnt}
For any $\mu\in\iii\aaa_0^*$ and $T\in\aaa_0^{\pls}$ we have
\[
\nu^{T,K}(W\ball_1(\mu))\ll_K(1+\norm{T})^\rk\tilde\beta(\mu).
\]
In particular,
\[
\nu_{\cusp}^K(W\ball_1(\mu))\ll_K\tilde\beta(\mu)
\]
and
\[
\nu^K_{\cusp,\nt}(W\ball_1(\mu))\ll_K\begin{cases}(1+\norm{\mu})^{\dm-\rk-1},&\text{if }\reg(\mu)\le 2,\\
0,&\text{otherwise.}\end{cases}
\]
\end{lemma}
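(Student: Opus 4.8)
The proof will run on Corollary~\ref{cor: finmatz} and on the positivity of the distributions $J_P^T$. I would fix once and for all a \emph{special} archimedean function $f\in\Test$ in the sense of Definition~\ref{def: special}, supported in $\K_\infty\exp B_R\K_\infty$ with $R$ depending only on $G$, put $g=\sph f$, and for the given $\mu\in\iii\aaa_0^*$ set $F=f_\mu*f_\mu^*$. Since $g\mapsto g_\mu$ preserves the support radius, $F$ is supported in $\K_\infty\exp B_{2R}\K_\infty$ independently of $\mu$, and $\sph F=g_\mu g_\mu^*$. In the nonarchimedean Hecke algebra I always take the identity $h=\one_{\K^\bad}$ of $\Hecke^\bad$, so $\maxsupp(h)=0$, $\norm h_1=1$ and $h(e)=1$.

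\textbf{First bound.} Given $T\in\aaa_0^{\pls}$ I would pick $T'=T+t\sum_{\alpha\in\Phi^+}\alpha^\vee$ with $t\ge0$ minimal such that $\regd(T')>C_1(1+\max_{x\in K}\log\norm x+2R)$; this is possible with $\norm{T'}\ll_K\norm T+1$ and $T'\ge T$, because $\regd(T+t\sum_\alpha\alpha^\vee)=\regd(T)+2t$. By \eqref{eq: mntn}, $\nu^{T,K}(W\ball_1(\mu))\le\nu^{T',K}(W\ball_1(\mu))$, and the heart of the matter is the inequality $\nu^{T',K}(W\ball_1(\mu))\le J^{T'}(F\otimes\one_K\otimes h)$. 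To prove it, I would observe that for $\pi\in\Pi_2(M)^{\K_\infty K}$ and $\lambda\in\iii\aaa_M^*$ the spectral parameter $\prm_{\pi_\infty}+\lambda$ lies in $\aaa_{0,\herm}^*$: $\pi_\infty$ is spherical and unitary on $M(\R)$, so $w_M\overline{\prm_{\pi_\infty}}=-\prm_{\pi_\infty}$ for some $w_M\in W_M$, and $w_M$ fixes $\aaa_M^*$ pointwise. Hence, $g_\mu$ being $W$-invariant, $\sph F(\prm_{\pi_\infty}+\lambda)=g_\mu(\prm_{\pi_\infty}+\lambda)\overline{g_\mu(-\overline{\prm_{\pi_\infty}+\lambda})}=\abs{g_\mu(\prm_{\pi_\infty}+\lambda)}^2\ge0$. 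Moreover $\norm{\Re(\prm_{\pi_\infty}+\lambda)}=\norm{\Re\prm_{\pi_\infty}}\le\norm{\rhosymb}$: the archimedean unitarity bound for $M(\R)$ gives $\norm{\Re\prm_{\pi_\infty}}\le\norm{\rhosymb_M}$, and an elementary computation with the orthogonal decomposition $\aaa_0^*=(\aaa_0^M)^*\oplus\aaa_M^*$ shows $\rhosymb-\rhosymb_M\in\aaa_M^*$, so $\norm{\rhosymb_M}\le\norm{\rhosymb}$. Consequently \eqref{eq: bndspcl}, applied with $\mu$ replaced by the relevant $w\mu$ and using $g_{w\mu}=g_\mu$, yields $\sph F(\prm_{\pi_\infty}+\lambda)\ge1$ whenever $\Im\prm_{\pi_\infty}+\lambda\in W\ball_1(\mu)$. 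Inserting these two facts into the explicit formula \eqref{eq: explct} for $J^{T'}_P(F\otimes\one_K\otimes h)$ (with $h=\one_{\K^\bad}$), discarding the nonnegative contribution of the parameters outside $W\ball_1(\mu)$, and summing over $P$, gives the inequality. Finally, by the choice of $T'$ and \eqref{EqnGeometricSpectral}, $J^{T'}(F\otimes\one_K\otimes h)=\mathbb J^{T'}(F\otimes\one_K\otimes h)\le\abs{\mathbb J^{T'}(F\otimes\one_K\otimes h)-\tilde v_KF(e)}+\tilde v_KF(e)$; Corollary~\ref{cor: finmatz} bounds the first term by $\ll_K(1+\norm{T'})^\rk\log(2+\norm\mu)^\rk\tilde\beta(\mu)/D(\mu)$, while $F(e)=\norm{f_\mu}_2^2=\int_{\iii\aaa_0^*}\abs{g_\mu(\lambda)}^2\beta(\lambda)\,d\lambda\ll\tilde\beta(\mu)$ by \eqref{eq: obvs}, \eqref{eq: beta12}, \eqref{eq: bndgmu}. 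Since $\log(2+\norm\mu)^\rk/D(\mu)\ll1$ by $(1+\norm\mu)^{1/2}\ll D(\mu)$, this produces $\nu^{T,K}(W\ball_1(\mu))\ll_K(1+\norm{T'})^\rk\tilde\beta(\mu)\ll_K(1+\norm T)^\rk\tilde\beta(\mu)$.

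\textbf{Second and third bounds.} Cusp forms are fixed by the truncation operators, so for $P=G$ each $\pi\in\Pi_{\cusp}(G)^{\K_\infty K}$ contributes at least $\dim(\AF^G_{\cusp,\pi})^{\K_\infty K}$ to $\nu_G^{T,K}$; hence $\nu_{\cusp}^K\le\nu_G^{T,K}\le\nu^{T,K}$ for every $T\in\aaa_0^{\pls}$, and applying the first bound at one fixed $T_*\in\aaa_0^{\pls}$ gives $\nu_{\cusp}^K(W\ball_1(\mu))\ll_K\tilde\beta(\mu)$. For the last assertion I would split on $\reg(\mu)$. If $\reg(\mu)\le2$ then $\abs{\sprod\mu{\alpha^\vee}}\norm\alpha\le2$ for some $\alpha\in\Phi^+$, so one factor of $\tilde\beta(\mu)=\prod_{\alpha\in\Phi^+}(1+\abs{\sprod\mu{\alpha^\vee}})$ is $O(1)$ and therefore $\tilde\beta(\mu)\ll(1+\norm\mu)^{\dm-\rk-1}$; combined with $\nu_{\cusp,\nt}^K\le\nu_{\cusp}^K$ and the second bound this is the claim. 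If $\reg(\mu)>2$ I claim the measure vanishes: any $\pi\in\Pi_{\cusp}(G)^{\K_\infty K}$ with $\pi_\infty$ non-tempered has $\prm_{\pi_\infty}$ unitary, hence hermitian, so writing $\prm_{\pi_\infty}=x+\iii y$ with $x=\Re\prm_{\pi_\infty}\ne0$ there is $w\in W$ with $wx=-x$ and $wy=y$; since $x\ne0$ we have $w\ne1$, so $\reg(\Im\prm_{\pi_\infty})=\reg(y)\le\norm{wy-y}=0$. As $\nu\mapsto\reg(\nu)$ is $2$-Lipschitz (each $\nu\mapsto\abs{\sprod\nu{\alpha^\vee}}\norm\alpha$ is), a point $\Im\prm_{\pi_\infty}$ with $\reg=0$ cannot lie in $W\ball_1(\mu)$ once $\reg(\mu)>2$, so no such $\pi$ contributes and $\nu_{\cusp,\nt}^K(W\ball_1(\mu))=0$.

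The main obstacle is the inequality $\nu^{T',K}(W\ball_1(\mu))\le J^{T'}(F\otimes\one_K\otimes h)$ in the first bound: it hinges on the hermiticity of the parameters $\prm_{\pi_\infty}+\lambda$ (used, together with the $W$-invariance of $g_\mu$, to fix the sign of $\sph F$ on the whole spectral side) and on the archimedean unitarity bound $\norm{\Re\prm_{\pi_\infty}}\le\norm{\rhosymb}$; the shift from $T$ to $T'$ needed to invoke \eqref{eq: polyJ} is only bookkeeping. The vanishing part of the third assertion is, by contrast, immediate once one notices that hermiticity pins the imaginary part of a non-tempered unitary spherical parameter onto a Weyl wall.
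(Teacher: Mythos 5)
Your proof is correct and follows essentially the same route as the paper: choose a special $f\in\Test_{\pd}$, bound $\nu^{T,K}(W\ball_1(\mu))$ from above by $J^T(f_\mu*f_\mu\otimes\one_K)$ using positivity and \eqref{eq: bndspcl}, control $J^T$ via \eqref{EqnGeometricSpectral} and Corollary~\ref{cor: finmatz}, extend to all $T$ via the monotonicity \eqref{eq: mntn}, and finally use $\tilde\beta(\mu)\ll(1+\norm{\mu})^{\dm-\rk-1}$ when $\reg(\mu)\le 2$ while showing the non-tempered spectrum cannot contribute when $\reg(\mu)>2$. You usefully make explicit a few steps the paper leaves tacit (the hermiticity of $\prm_{\pi_\infty}+\lambda$ and the bound $\norm{\Re\prm_{\pi_\infty}}\le\norm{\rhosymb_M}\le\norm{\rhosymb}$ needed to invoke \eqref{eq: bndspcl}, the shift from $T$ to $T'$, and the Lipschitz argument for $\reg$ in the vanishing case), but the structure and key lemmas are identical.
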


\begin{proof}
Fix $f\in\Test_{\pd}$ special (see Definition \ref{def: special}).
By \eqref{eq: bndspcl} and \eqref{eq: explct} we have
\[
J^T(f_\mu*f_\mu\otimes \one_K)\ge\nu^{T,K}(W\ball_1(\mu))
\]
for any $T\in\aaa_0^{\pls}$.
On the other hand, by Corollary \ref{cor: finmatz} (with $h=1$) and \eqref{EqnGeometricSpectral} we have
\[
\abs{J^T(f_\mu*f_\mu\otimes \one_K)}\ll_{f,K}(1+\norm{T})^\rk\tilde\beta(\mu)
\]
for all $T\in\aaa_0^{\pls}$ with $\regd(T)$ sufficiently large (depending on the support of $f$ and on $K$).
By \eqref{eq: mntn} this holds for all $T\in\aaa_0^{\pls}$.

The last estimate is a consequence of the second one
together with the fact that
$\tilde\beta(\mu)\ll(1+\norm{\mu})^{\dm-\rk-1}$ if $\reg(\mu)\le2$ and
$W\ball_1(\mu)\cap \aaa_{0,\herm}^*=\emptyset$ otherwise.
The lemma follows.
\end{proof}

It is possible to use positivity to deduce a more precise upper bound on the cuspidal spectrum.
To go beyond upper bounds, we will need the full force of Theorem \ref{thm: finmatz} (i.e., for an arbitrary $h$).
First, we make the following elementary, but crucial observation.

\begin{lemma} \label{lem: diric}
For any $M\in\Levis$ there exists an integer $N$ (depending on $K$), divisible by $\prod_{p\in\bad}p$, such that for any
$\pi\in\Pi_2(M)^K$, the central character of $\pi_p$ is trivial on $T_M (\Q_p)$ whenever $p\equiv1\pmod N$.
\end{lemma}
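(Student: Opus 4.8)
The plan is to reduce the assertion to the elementary fact that a finite-order idele class character of $\Q$ of conductor dividing $N$ restricts to the trivial character of $\Q_p^\times$ for every prime $p\equiv1\pmod N$; the point is that the central character of $\pi$, restricted to the \emph{split} central torus $T_M$, is such a character with $N$ controlled by $K$.

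\emph{Step 1 (a uniform level bound).} Since $\Pi_2(M)^K\subseteq\Pi_2(M)^{K'}$ for $K'=K\cap\prod_p\K_p$ and $\bad_{K'}=\bad_K$, we may and do assume that $K=K_\bad\times\prod_{p\notin\bad}\K_p$ with $K_\bad\subseteq\prod_{p\in\bad}\K_p$. Fix $P\in\pars(M)$. By definition (or a fortiori, if $\Pi_2(M)^K$ carries the archimedean condition as well) the space $I_P(\pi^\fin,0)$ has a nonzero $K$-invariant vector, i.e.\ a nonzero function $f:G(\A_\fin)\to V_{\pi^\fin}$ with $f(pg)=\modulus_P(p)^{1/2}\pi^\fin(p_M)f(g)$ for $p\in P(\A_\fin)$ and $f(gk)=f(g)$ for $k\in K$. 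By the Iwasawa decomposition $G(\A_\fin)=P(\A_\fin)\prod_p\K_p$ and the compactness of $P\bs G$, we may write $G(\A_\fin)=\bigsqcup_{j=1}^n P(\A_\fin)\kappa_jK$ for a finite set $\kappa_1,\dots,\kappa_n\in\prod_p\K_p$ depending only on $G$, $P$ and $K$. For $p\in P(\A_\fin)\cap\kappa_jK\kappa_j^{-1}$ the identity $f(\kappa_j)=f\bigl(p\,\kappa_j\,(\kappa_j^{-1}p^{-1}\kappa_j)\bigr)=\modulus_P(p)^{1/2}\pi^\fin(p_M)f(\kappa_j)$, together with the triviality of $\modulus_P$ on compact subgroups, shows that $v_j:=f(\kappa_j)$ is fixed by $\pi^\fin(L_j)$, where $L_j\subseteq M(\A_\fin)$ is the image of $P(\A_\fin)\cap\kappa_jK\kappa_j^{-1}$ under $P\to M$, a compact open subgroup with $L_{j,p}=M(\Q_p)\cap\K_p$ for $p\notin\bad$. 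As some $v_j\ne0$, the representation $\pi^\fin$ has a nonzero vector fixed by $K_M:=\bigcap_jL_j$, a fixed compact open subgroup of $M(\A_\fin)$; hence the finite part of the central character of $\pi$ is trivial on the compact open subgroup $T_M(\A_\fin)\cap K_M$ of $T_M(\A_\fin)$.

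\emph{Step 2 (from level to conductor).} Consider $\chi:=\omega_\pi|_{T_M(\A)}$, an idele class character of $T_M(\Q)\bs T_M(\A)$; it is unitary (as $\pi$ is), trivial on $A_M$ (as $\pi\in\Pi_2(M)$) and trivial on $T_M(\A_\fin)\cap K_M$. Fix an isomorphism $T_M\cong\mathbb{G}_m^r$ over $\Q$, which is defined over $\Z[1/N_0]$ for some $N_0$ depending only on $M$, and write $\chi=(\chi_1,\dots,\chi_r)$. Each $\chi_i$ is a unitary idele class character of $\Q$, trivial on $\R_{>0}$ and unramified outside the fixed finite set $\bad'$ of primes dividing $N_0\prod_{p\in\bad}p$, and of conductor at $p\in\bad'$ bounded in terms of $K$. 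A unitary character of $\Q^\times\bs\A^\times$ trivial on $\R_{>0}$ factors through $\widehat{\Z}^\times$, hence has finite order; combining this with the conductor bound, each $\chi_i$ factors through $(\Z/N\Z)^\times$ with $N:=\prod_{p\in\bad'}p^{c_p}$ for suitable fixed exponents $c_p\ge1$. In particular $\prod_{p\in\bad}p\mid N$.

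\emph{Step 3 (conclusion).} Let $p\equiv1\pmod N$. Then $p\nmid N$, so each $\chi_{i,p}$ is unramified; applying the product formula $\prod_v\chi_{i,v}(p)=1$ to the idele $p\in\Q^\times$ and using $\chi_{i,\infty}(p)=1$ (as $p>0$), $\chi_{i,\ell}(p)=1$ for $\ell\notin\bad'$ (unramified with $p\in\Z_\ell^\times$) and $\chi_{i,\ell}(p)=1$ for $\ell\in\bad'$ (its restriction to $\Z_\ell^\times$ factors through $(\Z/\ell^{c_\ell}\Z)^\times$ and $p\equiv1\pmod{\ell^{c_\ell}}$), we obtain $\chi_{i,p}(p)=1$; since $\chi_{i,p}$ is unramified this forces $\chi_{i,p}\equiv1$ on $\Q_p^\times$. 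Therefore $\chi_p=\omega_{\pi_p}|_{T_M(\Q_p)}$ is trivial, which is the assertion. I expect the main obstacle to be Step 1 — extracting a uniform level bound for $\pi^\fin$ (equivalently for $\omega_\pi|_{T_M(\A_\fin)}$) out of the $K$-invariance of the \emph{induced} representation $I_P(\pi,0)$ — whereas Steps 2 and 3 are routine manipulations with idele class characters and class field theory over $\Q$.
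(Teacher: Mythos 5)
Your proof is correct and takes essentially the same route as the paper. The paper's proof is a one-liner: it reduces the lemma to the observation that idele class characters of $\Q$ trivial on $\R_{>0}$ of a prescribed level are exactly the Dirichlet characters of a corresponding modulus $N'$, hence trivial at primes $p\equiv1\pmod{N'}$. Your Steps 2 and 3 spell out exactly this reduction and the standard class-field-theoretic computation, and your Step 1 makes explicit the uniform level bound on $\omega_\pi|_{T_M(\A_\fin)}$ that the paper takes for granted (namely, that a $K$-fixed vector in $I_P(\pi,0)$ forces $\pi^\fin$ to have a $K_M$-fixed vector for a fixed compact open $K_M\subseteq M(\A_\fin)$, via the evaluation of a nonzero $K$-invariant section at the finitely many double coset representatives $\kappa_j$). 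I checked the details of Step 1 — the identity $\kappa_j=p\,\kappa_j\,(\kappa_j^{-1}p^{-1}\kappa_j)$, the triviality of $\delta_P$ on compacta, and the observation $L_{j,p}=M(\Q_p)\cap\K_p$ for $p\notin\bad$ — and they are all correct. The only cosmetic remark is that, once some $v_j\neq0$, you already get a vector fixed by $L_j$, so passing to $\bigcap_j L_j$ is only needed to have $K_M$ independent of $\pi$, which you clearly intend.
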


\begin{proof}
This is simply because if $K'$ is an open compact subgroup of $\A_{\fin}^*$, then the characters of $\R_+\Q^*\bs\A^*$
of level $K'$ are precisely the Dirichlet characters of level $N'$ where $N'$ is determined by $K'$.
\end{proof}

The following proposition is the key assertion for the proof of the Weyl law.

\begin{proposition} \label{prop: mubnd}
We have
\[
\nu^{T,K}_{\noncusp}(W\ball_1(\mu))\ll_K\frac{\tilde\beta(\mu)(1+\norm{T})^\rk
\log(2+\norm{\mu})^{3\rk}}{D(\mu)^{(2A+1)^{-1}}}
\ll \frac{\tilde\beta(\mu)(1+\norm{T})^\rk}{(1+\norm{\mu})^\delta}
\]
for all $\mu\in\iii\aaa_0^*$ and $T\in\aaa_0^{\pls}$,
where $A$ is the constant from Proposition \ref{prop: tech} (which depends only on $G$), and
$0 < \delta < (4A+2)^{-1}$.
\end{proposition}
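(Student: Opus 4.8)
The plan is to treat separately, for each associate class, the contribution $\nu^{T,K}_P(W\ball_1(\mu))$ of a proper parabolic $P\in\pars(M)$ ($M\neq G$) and the residual contribution $\nu^{T,K}_{\resid}(W\ball_1(\mu))$, whose sum is $\nu^{T,K}_{\noncusp}(W\ball_1(\mu))$; by \eqref{eq: mntn} one may assume $\regd(T)$ as large as needed (replacing a small $T$ by a suitable $T'\geq T$ with $\norm{T'}\ll1+\norm{T}$). Fix once and for all a special $f\in\Test_{\pd}$ (Definition \ref{def: special}), so that $\abs{\sph f_\mu(\nu)}\geq1$ whenever $\norm{\Re\nu}\leq\norm{\rhosymb}$ and $\norm{\Im(\nu-w\mu)}\leq1$ for some $w\in W$, and $F(e)=\norm{f_\mu}_2^2\ll_f\tilde\beta(\mu)$ by \eqref{eq: obvs}, \eqref{eq: beta12}, \eqref{eq: bndgmu}. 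The idea is to use, besides the archimedean amplifier $f_\mu\ast f_\mu^\ast$, a non-archimedean amplifier built from the operators of Proposition \ref{prop: tech} which is large on the non-cuspidal spectrum, and to play off the positivity of $J^T_Q(\cdot\ast\cdot^\ast)$ against Theorem \ref{thm: finmatz}.

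Concretely, fix a large $Y$ and let $\mathcal{Q}$ be the set of primes $p\in[Y,2Y]$ with $p\notin\bad$ (and, in the proper-parabolic case, $p\equiv1\pmod N$ with $N$ as in Lemma \ref{lem: diric}), so $\abs{\mathcal{Q}}\gg Y/\log Y$. For a nonempty $W$-invariant open $U\subset\tempdual$ set $g=\sum_{p\in\mathcal{Q}}\tau_{U,p}\in\Hecke^\bad$ (each $\tau_{U,p}$ extended by $\one_{\K_q}$ for $q\notin\{p\}\cup\bad$) and $h=g\ast g^\ast$. Because $\tau_{U,p}(e)=0$, the cross terms of $g\ast g^\ast$ vanish at the identity, so $h(e)=\sum_{p\in\mathcal{Q}}\norm{\tau_{U,p}}_2^2\ll\abs{\mathcal{Q}}$, while $\norm{h}_1\leq\norm{g}_1^2\ll\abs{\mathcal{Q}}^2Y^{2A}$ and $\maxsupp(h)\ll\log Y$ by Proposition \ref{prop: tech}(2)--(5). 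Hence by \eqref{EqnGeometricSpectral} and Corollary \ref{cor: finmatz}, for $\regd(T)\gg\log Y$,
\[
J^T(f_\mu\ast f_\mu^\ast\otimes\one_K\otimes h)\ll_{f,K}\abs{\mathcal{Q}}\tilde\beta(\mu)+\abs{\mathcal{Q}}^2Y^{2A}(1+\norm{T})^\rk\log(2+\norm{\mu})^\rk\frac{\tilde\beta(\mu)}{D(\mu)}.
\]
On the other hand, positivity of $J^T_Q(\cdot\ast\cdot^\ast)$ gives $J^T(f_\mu\ast f_\mu^\ast\otimes\one_K\otimes h)\geq J^T_P(f_\mu\ast f_\mu^\ast\otimes\one_K\otimes h)$, and the latter equals $\abs{W(M)}^{-1}\sum_{\pi\in\Pi_2(M)^{\K_\infty K}}\int_{\iii\aaa_M^\ast}\abs{\sph f_\mu(\prm_{\pi_\infty}+\lambda)}^2\abs{\hat g(I_P(\pi^\bad,\lambda))}^2\sum_{\varphi\in\ortho_P(\pi)^{\K_\infty K}}\norm{\Lambda^TE_P(\cdot,\varphi,\lambda)}_2^2\,d\lambda$ (using unitarity of $I_P(\cdot,\lambda)$ on $\iii\aaa_M^\ast$), with the analogous expansion for the residual part of $J^T_G$.

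The decisive input is that this contribution is $\gg\abs{\mathcal{Q}}^2\nu^{T,K}_P(W\ball_1(\mu))$ (resp.\ $\gg\abs{\mathcal{Q}}^2\nu^{T,K}_{\resid}(W\ball_1(\mu))$). For the residual part this is immediate: by Lemma \ref{lem: wallach} every $\pi\in\Pi_{\resid}(G)$ is non-tempered at every prime, so $\prm_{\pi_p}\in\hermdual\setminus\tempdual\subset\hermdual\setminus U$, hence $(\Sat_p\tau_{U,p})(\prm_{\pi_p})\geq1$ by Proposition \ref{prop: tech}(4) and $\hat g(\pi^\bad)\geq\abs{\mathcal{Q}}$; together with $\abs{\sph f_\mu}^2\geq1$ on the relevant region (residual $\pi$ have bounded $\norm{\Re\prm_{\pi_\infty}}$) this gives the bound for any fixed $U$. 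For a proper $P\in\pars(M)$ one uses Lemma \ref{lem: diric} to place, for $p\equiv1\pmod N$, the Frobenius--Hecke parameter of $I_P(\pi^\bad,\lambda)$ at $p$ inside a translate $\xi_pU^M\dmstM(\C)^1$ of the pertinent subtorus, which by Lemma \ref{lem: elem} is disjoint from $U_1^M$ or from $U_2^M$; Lemma \ref{lem: prpr} (incorporated in the construction of $\tau_{U^M_i,p}$ via its proper function) guarantees $\Sat_p\tau_{U^M_i,p}\geq1$ on all of $\hermdual\setminus U^M_i$, including its non-tempered part, so the residual-of-$M$ representations are amplified as above, and one is reduced to amplifying the cuspidal-of-$M$ part by $\gg\abs{\mathcal{Q}}^2$ as well (using $h=g_1\ast g_1^\ast+g_2\ast g_2^\ast$ with $g_i$ built from $U_i^M$). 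Granting this, writing $k=\abs{\mathcal{Q}}$ and dividing, one gets $\nu^{T,K}_P(W\ball_1(\mu))\ll_K\tilde\beta(\mu)/k+Y^{2A}(1+\norm{T})^\rk\log(2+\norm{\mu})^\rk\tilde\beta(\mu)/D(\mu)$ (and likewise for the residual term); choosing $Y$ so that $Y^{2A+1}$ is of order $D(\mu)/((1+\norm{T})^\rk\log(2+\norm{\mu})^\rk)$ (legitimate as long as $\regd(T)\gg\log Y$, the rest being covered by \eqref{eq: mntn}) balances the two terms to $\ll\tilde\beta(\mu)(1+\norm{T})^\rk\log(2+\norm{\mu})^{3\rk}/D(\mu)^{(2A+1)^{-1}}$ after absorbing the harmless powers of $\log Y\ll\log(2+\norm{\mu})$; summing over the finitely many proper associate classes and the residual term gives the first inequality, and the second follows from $D(\mu)\gg(1+\norm{\mu})^{1/2}$ on taking $0<\delta<(4A+2)^{-1}$.

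The main obstacle is the step left open above — showing that the cuspidal-of-$M$ part of the continuous spectrum near $\mu$ is amplified by $\gg\abs{\mathcal{Q}}^2$, not merely $\gg\abs{\mathcal{Q}}$. The tension is that the normalization $\tau_{U,p}(e)=0$, which is precisely what keeps $h(e)$ (hence the geometric main term) linear in $\abs{\mathcal{Q}}$, forces $\Sat_p\tau_{U,p}$ to be negative on $U$, and for a fixed cuspidal-of-$M$ representation the parameters at different primes may land in $U_1^M$ for some $p$ and in $U_2^M$ for others, so that a naive triangle-inequality lower bound for $\hat g$ can be overwhelmed. Overcoming this requires choosing the archimedean test function localized finely enough (at the price of enlarging $\supp f$, which only forces $\regd(T)\gg\log Y$) that $p^\lambda$ is essentially the fixed point $p^{\mu_M}$, together with an equidistribution input for $\{p^{\mu_M}:p\in\mathcal{Q}\}$ and the qualitative bound of Lemma \ref{lem: bndnt} to absorb the exceptional representations; I expect this to be the technically hardest part of the proof.
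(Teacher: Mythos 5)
Your overall structure is the right one (archimedean localizer $f_\mu\ast f_\mu^\ast$ times a non-archimedean amplifier built from the $\tau_{U,p}$, positivity $Y^2\nu^{T,K}_P(W\ball_R(\mu))\le J^T$, Corollary~\ref{cor: finmatz} on the geometric side, then balancing $Y\sim X/\log X$ against $X^{2A}/D(\mu)$ to get the exponent $(2A+1)^{-1}$). But the step you correctly flag as ``the technically hardest part'' is a genuine gap, and the workaround you sketch — $h=g_1\ast g_1^\ast+g_2\ast g_2^\ast$ with $g_i=\sum_p\tau_{U_i,p}$, plus an equidistribution input for $\{p^{\mu_M}\}$ — is not how the paper resolves it and would not easily close the gap: for a fixed $\pi$, the summands $\Sat_p\tau_{U_i,p}(\prm_{I_P(\pi_p,\lambda)})$ can be negative at exactly the primes where the parameter lies in $U_i$, and there is no uniformity over $\pi$ in any equidistribution statement you could plausibly invoke, so neither $\hat g_1$ nor $\hat g_2$ is under control.

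The paper's resolution is a short but decisive idea that you did not find: the amplifier is not a fixed combination of $U_1$- and $U_2$-operators, but is \emph{adapted to $\mu$ and to each prime $p$ separately}. Lemma~\ref{lem: elem} is applied with $\xi$ the $\tempdual$-part of $p^{\mu_P}$ to produce, for each prime $p$, an index $j_p(\mu)\in\{1,2\}$ (depending only on $p$ and $\mu$, not on $\pi$) such that $p^{\lambda}\dmstM(\C)\cap U_{j_p(\mu)}=\emptyset$ for all $\lambda\in\ball_{\delta_1/\log p}(\mu_P)$. Since Lemma~\ref{lem: diric} forces $\prm_{\pi_p}\in\dmstM(\C)$ for $p\equiv1\pmod N$, the parameter of $I_P(\pi_p,\lambda)$ lies in $p^\lambda\dmstM(\C)\cap\hermdual$ and hence in $\hermdual\setminus U_{j_p(\mu)}$ for \emph{every} $\pi\in\Pi_2(M)^K$; Proposition~\ref{prop: tech}(4) then gives $\widehat{\tau_{U_{j_p(\mu)},p}}(I_P(\pi_p,\lambda))\ge1$ uniformly in $\pi$. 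Setting $\theta=\sum_{p}\tau_{U_{j_p(\mu)},p}$, this yields $\hat\theta(I_P(\pi^\bad,\lambda))\ge Y$ by simple addition, with no sign cancellation, and $\hat{\theta\ast\theta}\ge Y^2$ on the whole cuspidal-of-$M$ spectrum near $\mu$; the $\ell^1$-, $\ell^2$- and support bounds on $\theta$ match yours, and the rest of your optimization ($X\sim D(\mu)^{(2A+1)^{-1}}$, then \eqref{eq: mntn} to remove the constraint $\regd(T)\gg\log X$, then covering $\ball_1(\mu)$ by $O(\log(2+\norm\mu)^\rk)$ balls of radius $R\sim 1/\log X$) carries through unchanged. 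The residual case is handled as you say, via Lemma~\ref{lem: wallach}. So: right scaffolding, right exponent, but the uniform-in-$\pi$ amplification — the actual technical heart — is achieved by the $p$-wise choice $j_p(\mu)$, not by averaging over both $U_i$ or by equidistribution.
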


\begin{proof}
First note that it is enough to bound
$\nu_P^{T,K}(W\ball_1(\mu))$ for all proper parabolic subgroups $P$,
as well as $\nu_{\resid}^{T,K}(W\ball_1(\mu))$.

To deal with the first case, fix a proper parabolic subgroup $P\in\pars(M)$ and
let $N$ be as in Lemma \ref{lem: diric}.
By Lemma \ref{lem: elem} there exist two open, $W$-invariant subsets $U_1,U_2\ne\emptyset$ of $\tempdual$ and a number
$\delta_1>0$, such that for every $p$ and $\mu\in\iii\aaa_0^*$ there exists an index $j_p(\mu)\in\{1,2\}$ with
\begin{equation} \label{eq: disjU}
p^{\lambda}\dmstM(\C)\cap U_{j_p(\mu)}=\emptyset\text{ for all }\lambda\in\ball_{\frac{\delta_1}{\log p}}(\mu_P).
\end{equation}
Let $\mu\in\iii\aaa_0^*$ and let $X\ge2$ be a parameter depending on $\mu$ (to be determined below).
Set
\[
\prms_{X,N}=\{p\le X:p\equiv1\pmod N\},\ \ Y=\#\prms_{X,N},
\]
and let $R=\frac{\delta_1}{\log X}$.
For each $p\in\prms_{X,N}$ let $\tau_{U_j,p}\in(\Hecke_p)_{\sa}$, $j=1,2$, be as in Proposition \ref{prop: tech},
and consider the following combination of Hecke operators:
\[
\theta=\theta_{X,\mu,P}=\sum_{p\in\prms_{X,N}}\tau_{U_{j_p(\mu)},p}\in\Hecke^{\bad}_{\sa}.
\]
As a consequence of Proposition \ref{prop: tech}, $\theta$ satisfies the following properties.
\begin{enumerate}
\item For any $\pi\in\Pi_2(M)^K$, $\lambda\in\ball_R(\mu_P)$ and $p\in\prms_{X,N}$ we have
$\widehat{\tau_{U_{j_p(\mu)},p}}(I_P(\pi_p,\lambda))\ge1$ by \eqref{eq: disjU} and Lemma \ref{lem: diric}. Hence,
$\hat\theta(I_P(\pi^\bad,\lambda))\ge Y$.
\item The functions $\tau_{U_{j_p(\mu)},p}$, $p\in\prms_{X,N}$ are pairwise orthogonal in $\Hecke^{\bad}$.
\item $\norm{\theta}_1\le\sum_{p\in\prms_{X,n}}\norm{\tau_{U_{j_p(\mu)},p}}_1\le B X^{A} Y$.
\item $\norm{\theta}_2^2=\sum_{p\in\prms_{X,N}}\norm{\tau_{U_{j_p(\mu)},p}}_2^2 \le B^2 Y$.
\item $\maxsupp(\theta)\ll\log X$.
\end{enumerate}

Fix $f\in\Test_{\pd}$ special and let $F_{\mu,X}=f_\mu*f_\mu\otimes\one_K\otimes\theta*\theta\in\ccg$.
By positivity, the first property of $\theta$ implies that
\[
Y^2 \nu_P^{T,K}(W\ball_R(\mu)) \le J_P^T(F_{\mu,X})
\le J^T(F_{\mu,X})
\]
for all $T\in\aaa_0^{\pls}$.
On the other hand, Corollary \ref{cor: finmatz} and the remaining properties of $\theta$ yield that for $\regd(T)>C_2\log X$, where $C_2$ is independent of $\mu$,
we have
\[
J^T(F_{\mu,X}) = \mathbb{J}^T(F_{\mu,X}) \ll_K B^2 \tilde\beta (\mu)
\left[ Y+\frac{X^{2A} Y^2 (1+\norm{T})^\rk\log(2+\norm{\mu})^\rk}{D(\mu)}\right].
\]
Since $Y \ge c_3 X/\log X$ for all $X \ge C_3$,
where $c_3 > 0$ and $C_3\ge 2$ depend only on $K$, we can take
$X=\max(D(\mu)^{(2A+1)^{-1}}, C_3)$ to get
\[
\nu_P^{T,K}(W\ball_R(\mu))\ll_K
\frac{\tilde\beta(\mu)(1+\norm{T})^\rk
\log(2+\norm{\mu})^{\rk}}{D(\mu)^{(2A+1)^{-1}}}.
\]

Turning to the residual contribution, it follows from Lemma \ref{lem: wallach}
and the fourth property of Proposition \ref{prop: tech},
that for any $\pi\in\Pi_{\resid}(G)$ we have
\[
\hat\theta(\pi^\bad)\ge Y.
\]
Thus, by the same argument as above
\[
\nu_{\resid}^{T,K}(W\ball_R(\mu))\ll_K
\frac{\tilde\beta(\mu)(1+\norm{T})^\rk
\log(2+\norm{\mu})^{\rk}}{D(\mu)^{(2A+1)^{-1}}}.
\]
Since $R\ge\frac{\delta_2}{\log(1+\norm{\mu})}$ for a suitable $\delta_2>0$,
we obtain
\[
\nu_{\noncusp}^{T,K}(W\ball_1(\mu))\ll_K
\frac{\tilde\beta(\mu)(1+\norm{T})^\rk
\log(2+\norm{\mu})^{2\rk}}{D(\mu)^{(2A+1)^{-1}}}
\]
at least under the condition $\regd(T)>C_2\log X \gg \log (2 + \norm{\mu})$.
However, using \eqref{eq: mntn} we can remove this condition (at the cost of replacing the exponent
$2\rk$ of $\log(2+\norm{\mu})$ by $3\rk$).
\end{proof}

\begin{theorem} \label{thm: locbnd}
There exists $\delta>0$ (depending only on $G$) such that for any $f\in\Test$ and $\mu\in\iii\aaa_0^*$ we have
\[
\abs{\tr R_{\cusp}(f_\mu\otimes \one_K)- v_K f_\mu(e)}\ll_{f,K}
\frac{\tilde\beta(\mu)}{(1+\norm{\mu})^\delta}.
\]
\end{theorem}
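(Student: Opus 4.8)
The plan is to read off $\tr R_{\cusp}(f_\mu\otimes\one_K)$ from the spectral expansion of $J^T(f_\mu\otimes\one_K)$ for a \emph{single} fixed truncation parameter $T_0$, to recognize the remaining spectral terms as pieces of $\nu^{T_0,K}_{\noncusp}$ and bound them by Proposition~\ref{prop: mubnd}, and to evaluate the geometric side via \eqref{EqnGeometricSpectral} and Corollary~\ref{cor: finmatz}. No normalization or positivity reduction on $f$ is needed, since the only positivity used is the (automatic) positivity of the Radon measures $\nu_P^{T,K}$ and the content of Proposition~\ref{prop: mubnd}. Write $g=\sph f$, so that $\sph f_\mu=g_\mu$, fix a sufficiently large integer $n$, and write $\dist(\xi,W\mu)=\min_{w\in W}\norm{\xi-w\mu}$. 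First I would fix $T_0\in\aaa_0^{\pls}$ with $\regd(T_0)$ large enough that \eqref{EqnGeometricSpectral} applies to $f_\mu\otimes\one_K$ with $h=1$ (note $\maxsupp(\one_{\K^{\bad}})=0$); since $f_\mu\in\TestR$ whenever $f\in\TestR$, the required lower bound for $\regd(T_0)$ depends only on $R$ and $K$, not on $\mu$. For this $T_0$, \eqref{EqnGeometricSpectral} gives $J^{T_0}(f_\mu\otimes\one_K)=\mathbb{J}^{T_0}(f_\mu\otimes\one_K)$, and Corollary~\ref{cor: finmatz} (with $h=1$, $F=f_\mu$), together with $D(\mu)\gg(1+\norm{\mu})^{1/2}$, yields
\[
J^{T_0}(f_\mu\otimes\one_K)=v_K\,f_\mu(e)+O_{f,K}\!\left(\frac{\tilde\beta(\mu)}{(1+\norm{\mu})^{1/2-\epsilon}}\right)
\]
for a fixed $\epsilon>0$; the constant $\tilde v_K$ of Corollary~\ref{cor: finmatz} matches $v_K$ here because $f_\mu(z_\infty)=f_\mu(e)$ for $z\in Z(\Q)\cap\K_\infty$ by left $\K_\infty$-invariance.

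Next I would isolate the cuspidal part of the spectral side. Decompose $J^{T_0}(f_\mu\otimes\one_K)=\sum_{P\in\pars/\sim}J^{T_0}_P(f_\mu\otimes\one_K)$, choosing each basis $\ortho_G(\pi)$ compatibly with $L^2_{\disc}=L^2_{\cusp}\oplus L^2_{\resid}$. Since $\Lambda^{T_0}$ acts as the identity on cusp forms, the cuspidal basis vectors in the $P=G$ term each contribute $1$ to $\norm{\Lambda^{T_0}\varphi}_2^2$, so
\[
\tr R_{\cusp}(f_\mu\otimes\one_K)=J^{T_0}(f_\mu\otimes\one_K)-(I)-(II),
\]
where $(II)=\sum_{P\neq G}J^{T_0}_P(f_\mu\otimes\one_K)$ and $(I)$ is the contribution of the residual basis vectors to $J^{T_0}_G(f_\mu\otimes\one_K)$. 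It remains to bound $(I)$ and $(II)$ by $O_{f,K}(\tilde\beta(\mu)(1+\norm{\mu})^{-\delta})$.

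For both terms I would use that every archimedean parameter occurring is that of a unitarizable unramified representation, so $\norm{\Re(\prm_{\pi_\infty}+\lambda)}\le\norm{\rhosymb}$ (from $\aaa_{0,\unt}^*\subset\{\norm{\Re\lambda}\le\norm{\rhosymb}\}$ and its analogue over Levi subgroups) --- for $(I)$ one also invokes Wallach's Lemma~\ref{lem: wallach} to know the residual $\pi$ are of this kind. Combined with \eqref{eq: bndgmu} and $\norm{\prm_{\pi_\infty}+\lambda-w\mu}\ge\norm{\Im\prm_{\pi_\infty}+\lambda-w\mu}$, this gives $\abs{g_\mu(\prm_{\pi_\infty}+\lambda)}\ll_{n,f}(1+\dist(\Im\prm_{\pi_\infty}+\lambda,W\mu))^{-n}$ for all the relevant parameters. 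Feeding this into \eqref{eq: explct} and bounding termwise, $\abs{(II)}\ll_{n,f}\nu^{T_0,K}_{\noncusp}(\psi)$ and $\abs{(I)}\ll_{n,f}\nu^{T_0,K}_{\resid}(\psi)\le\nu^{T_0,K}_{\noncusp}(\psi)$, where $\psi(\xi)=(1+\dist(\xi,W\mu))^{-n}$ and we used $\sum_{P\neq G}\nu_P^{T_0,K}\le\nu^{T_0,K}_{\noncusp}$. Covering $\iii\aaa_0^*$ by unit balls, applying Proposition~\ref{prop: mubnd} with the fixed $T_0$, and using the submultiplicativity $\tilde\beta(\nu)\ll\tilde\beta(\mu)(1+\dist(\nu,W\mu))^{\dm-\rk}$ (from \eqref{eq: beta12} and \eqref{eq: obvs}),
\[
\nu^{T_0,K}_{\noncusp}(\psi)\ll\sum_{\nu}(1+\dist(\nu,W\mu))^{-n}\,\nu^{T_0,K}_{\noncusp}(W\ball_1(\nu))\ll_{f,K}\tilde\beta(\mu)\sum_{\nu}\frac{(1+\dist(\nu,W\mu))^{\dm-\rk-n}}{(1+\norm{\nu})^\delta}\ll\frac{\tilde\beta(\mu)}{(1+\norm{\mu})^\delta},
\]
the last step for $n$ large, by splitting the sum at $\dist(\nu,W\mu)=\norm{\mu}/2$. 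Assembling the three estimates and taking $\delta<(4A+2)^{-1}$ (in particular $\delta<1/2$) gives the theorem, with $\delta$ depending only on $G$.

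The only genuinely delicate point --- the deep analytic inputs being Proposition~\ref{prop: mubnd} and Theorem~\ref{thm: finmatz} --- is to keep the truncation parameter from interacting with $\mu$: one must notice that both $J^T=\mathbb{J}^T$ and Proposition~\ref{prop: mubnd} are available for a single $T_0$ whose size depends only on $f$ and $K$, so that the parasitic factor $(1+\norm{T})^\rk$ becomes a constant. A secondary technicality is the discrepancy between $\prm_{\pi_\infty}+\lambda$, which carries a nonzero real part for the non-tempered residual and Eisenstein contributions, and $\Im\prm_{\pi_\infty}+\lambda$, which is what the measures $\nu_P^{T,K}$ record; this is harmless since the real part is bounded in terms of $\rhosymb$ and $g_\mu$ decays rapidly in the imaginary directions. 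Finally, note that non-tempered \emph{cuspidal} representations need no separate treatment: they belong to $\tr R_{\cusp}(f_\mu\otimes\one_K)$ on the left and are absorbed into the geometric main term $v_K f_\mu(e)$ on the right.
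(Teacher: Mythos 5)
Your proof is correct and follows essentially the same route as the paper's: fix a truncation parameter $T_0$ depending only on the support of $f$ and on $K$, identify the geometric side via \eqref{EqnGeometricSpectral} and Corollary~\ref{cor: finmatz}, and bound the non-cuspidal spectral contribution by covering with unit balls and applying Proposition~\ref{prop: mubnd}. The only cosmetic differences are that the paper packages the control of the (bounded) real parts of the parameters in the auxiliary function $\mxml(\sph f_\mu)$ and uses Lemma~\ref{lem: bndnt} rather than Proposition~\ref{prop: mubnd} for the far range $\dist(\nu,W\mu)>\norm{\mu}/2$, whereas you invoke \eqref{eq: bndgmu} directly and apply Proposition~\ref{prop: mubnd} uniformly; both variants work.
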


\begin{proof}
For any $g\in\Pspace$ let $\mxml g\in C(\iii\aaa_0^*)^W$ be given by
\[
\mxml g(\lambda)=\max_{x\in\aaa_0^*:\norm{x}\le\norm{\rhosymb}}\abs{g(\lambda+x)},\ \ \lambda\in\iii\aaa_0^*.
\]
Then, for any $f\in\Test$ we have
\[
\abs{\tr R_{\cusp}(f\otimes\one_K)-J^T(f\otimes\one_K)}\le\nu_{\noncusp}^{K,T}(\mxml(\sph f)).
\]
Thus, using \eqref{EqnGeometricSpectral} and Corollary \ref{cor: finmatz}, in order to finish the proof it remains to prove that
\begin{equation} \label{UpperboundNuNoncusp}
\nu_{\noncusp}^{K,T}(\mxml(\sph f_\mu))\ll_{f,K}\frac{\tilde\beta(\mu)(1+\norm{T})^\rk}{(1+\norm{\mu})^\delta}
\end{equation}
for all $T\in\aaa_0^{\pls}$. Clearly,
\[
\nu_{\noncusp}^{K,T}(\mxml(\sph f_\mu))\le\sum_{k=1}^\infty \nu_{\noncusp}^{K,T}(W\ball_k(\mu))\sup_{(W\ball_{k-1}(\mu))^c}\mxml(\sph f_\mu).
\]
By \eqref{eq: bndgmu}, for every $n>0$ we have
\[
\sup_{(W\ball_{k-1}(\mu))^c}\mxml(\sph f_\mu)\ll_{f,n}k^{-n}.
\]
Covering $\ball_k(\mu)$ by $O(k^\rk)$ balls of radius $1$ with centers in $\ball_k(\mu)$ and using
either Proposition \ref{prop: mubnd} and \eqref{eq: beta12} if $k\le\frac12\norm{\mu}$ or Lemma \ref{lem: bndnt} and \eqref{eq: obvs} if $k>\frac12\norm{\mu}$, we get
\[
\nu_{\noncusp}^{K,T}(W\ball_k(\mu))\ll_Kk^\dm(1+\norm{T})^\rk\cdot
\begin{cases}\frac{\tilde\beta(\mu)}{(1+\norm{\mu})^\delta}&\text{if }k\le\frac12\norm{\mu},\\
1&\text{otherwise.}\end{cases}
\]
Our claim follows.
\end{proof}

\begin{corollary} \label{cor: lclbnd}
For any $f\in\Test$ and $\mu\in\iii\aaa_0^*$ we have
\[
\abs{\nu_{\cusp,\temp}^K(\sph f_\mu)- v_K f_\mu(e)}\ll_{f,K}(1+\norm{\mu})^{\dm-\rk-\delta}.
\]
\end{corollary}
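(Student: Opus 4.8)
The plan is to deduce the corollary from Theorem~\ref{thm: locbnd} by peeling off the non-tempered cuspidal contribution, which is already under control by Lemma~\ref{lem: bndnt}.

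First I would unwind the trace. Since $f_\mu$ is bi-$\K_\infty$-invariant and $\one_K$ is (up to $\one_{\K^\bad}$) the projection onto $K$-invariants, each $\pi\in\Pi_{\cusp}(G)^{\K_\infty K}$ contributes $\dim(\AF^G_{\cusp,\pi})^{\K_\infty K}$ copies of the scalar $\sph f_\mu(\prm_{\pi_\infty})$, so
\[
\tr R_{\cusp}(f_\mu\otimes\one_K)=\sum_{\pi\in\Pi_{\cusp}(G)^{\K_\infty K}}\dim(\AF^G_{\cusp,\pi})^{\K_\infty K}\,\sph f_\mu(\prm_{\pi_\infty}).
\]
Splitting this sum according to whether $\pi_\infty$ is tempered (equivalently $\prm_{\pi_\infty}\in\iii\aaa_0^*$, in which case $\prm_{\pi_\infty}=\Im\prm_{\pi_\infty}$) or not, the tempered part equals exactly $\nu_{\cusp,\temp}^K(\sph f_\mu)$, whence
\[
\abs{\tr R_{\cusp}(f_\mu\otimes\one_K)-\nu_{\cusp,\temp}^K(\sph f_\mu)}\le\sum_{\pi}\dim(\AF^G_{\cusp,\pi})^{\K_\infty K}\,\abs{\sph f_\mu(\prm_{\pi_\infty})},
\]
with the sum running over the non-tempered cuspidal $\pi$.

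Next I would bound this remaining sum by $\nu_{\cusp,\nt}^K(\mxml(\sph f_\mu))$, where $\mxml$ is the envelope introduced in the proof of Theorem~\ref{thm: locbnd}: for a non-tempered unitary $\pi_\infty$ one has $\prm_{\pi_\infty}\in\aaa_{0,\unt}^*$, hence $\norm{\Re\prm_{\pi_\infty}}\le\norm{\rhosymb}$, and therefore $\abs{\sph f_\mu(\prm_{\pi_\infty})}\le\mxml(\sph f_\mu)(\Im\prm_{\pi_\infty})$ straight from the definition of $\mxml$; summing over $\pi$ turns the right-hand side into $\nu_{\cusp,\nt}^K(\mxml(\sph f_\mu))$. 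To estimate the latter I would decompose over the annuli $W\ball_k(\mu)\setminus W\ball_{k-1}(\mu)$, $k\ge1$: covering $W\ball_k(\mu)$ by $O(k^\rk)$ unit balls and invoking Lemma~\ref{lem: bndnt} gives $\nu_{\cusp,\nt}^K(W\ball_k(\mu))\ll_K k^\rk(1+\norm{\mu}+k)^{\dm-\rk-1}$, while by \eqref{eq: bndgmu} one has $\sup_{\iii\aaa_0^*\setminus W\ball_{k-1}(\mu)}\mxml(\sph f_\mu)\ll_{f,n}k^{-n}$ for every $n$ (the shift by $x$ with $\norm{x}\le\norm{\rhosymb}$ costing only a bounded amount since $\norm{\rhosymb}$ is fixed). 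Summing with $n$ large yields $\nu_{\cusp,\nt}^K(\mxml(\sph f_\mu))\ll_{f,K}(1+\norm{\mu})^{\dm-\rk-1}$.

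Finally I would combine the two ingredients: Theorem~\ref{thm: locbnd} together with $\tilde\beta(\mu)\ll(1+\norm{\mu})^{\dm-\rk}$ from \eqref{eq: obvs} gives $\abs{\tr R_{\cusp}(f_\mu\otimes\one_K)-v_Kf_\mu(e)}\ll_{f,K}(1+\norm{\mu})^{\dm-\rk-\delta}$, and since we may take $\delta\le1$ the inequality $(1+\norm{\mu})^{\dm-\rk-1}\le(1+\norm{\mu})^{\dm-\rk-\delta}$ lets the triangle inequality conclude. I expect the one genuinely delicate point to be the step involving $\mxml$: the measures $\nu_{\cusp,\temp}^K$ and $\nu_{\cusp,\nt}^K$ live on imaginary parts of the spectral parameters, whereas $\tr R_{\cusp}$ sees the actual Frobenius--Hecke parameters $\prm_{\pi_\infty}$, which for non-tempered $\pi$ carry a nonzero real part bounded by $\norm{\rhosymb}$; the envelope $\mxml$ is precisely the device that bridges this gap, and one must check that passing to it preserves the rapid decay away from $W\mu$, which it does by \eqref{eq: bndgmu}.
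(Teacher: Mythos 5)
Your proof is correct and follows essentially the same route as the paper: split off the non-tempered cuspidal contribution, dominate it using Lemma~\ref{lem: bndnt} and the rapid decay \eqref{eq: bndgmu}, and combine with Theorem~\ref{thm: locbnd} via \eqref{eq: obvs}. The only place you are more explicit than the paper is the passage through $\mxml$: the paper compresses this step into the shorthand $\nu_{\cusp,\nt}^K(\abs{\sph f_\mu})$, which, read literally as integrating $\abs{\sph f_\mu}\rest_{\iii\aaa_0^*}$ against the measure, evaluates the Paley--Wiener function at $\Im\prm_{\pi_\infty}$ rather than at the actual (complex) parameter $\prm_{\pi_\infty}$; your use of the envelope $\mxml$, together with the bound $\norm{\Re\prm_{\pi_\infty}}\le\norm{\rhosymb}$ on the unitary dual, makes the comparison genuinely correct without changing the decay rate, which is what the authors intended.
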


\begin{proof}
Indeed,
\[
\abs{\tr R_{\cusp}(f_\mu\otimes \one_K)-\nu_{\cusp,\temp}^K(\sph f_\mu)}\le \nu_{\cusp,\nt}^K(\abs{\sph f_\mu}).
\]
On the other hand, by \eqref{eq: bndgmu}, for all $n>0$ we have
\[
\nu_{\cusp,\nt}^K(\abs{\sph f_\mu})\ll_{f,n}\sum_{k=1}^\infty\nu_{\cusp,\nt}^K(W\ball_k(\mu))k^{-n},
\]
and by Lemma \ref{lem: bndnt}
\[
\nu_{\cusp,\nt}^K(W\ball_k(\mu))\ll k^\rk(k+\norm{\mu})^{\dm-\rk-1}.
\]
Hence, the corollary follows from Theorem \ref{thm: locbnd}.
\end{proof}

\begin{definition}
For any $A\subset\iii\aaa_0^*$ and $R>0$ let
\[
\bdry_RA=\{x\in\iii\aaa_0^*:\ball_R(x)\cap A\ne\emptyset\text{ and }\ball_R(x)\not\subset A\}.
\]
\end{definition}
Note that for any $x\in\bdry_RA$ we have $\ball_R(x)\subset\bdry_{2R}A$.
Hence, by the Vitali covering lemma, $\bdry_{\frac12}A$ is covered by $O(\vol\bdry_1A)$ balls of radius $1$.
On the other hand, it is clear that if $x\in\bdry_RA$, then $\dist(x,\bdry_\epsilon A)\le R$ for any $\epsilon>0$.
(Here $\dist(x,B)=\inf_{b\in B}\norm{x-b}$.)
It follows that for every $R\ge1$,
\begin{equation} \label{eq: cvr}
\bdry_RA\text{ is covered by $O(R^\rk\vol\bdry_1A)$ balls of radius $1$.}
\end{equation}
In particular, $\vol(\bdry_RA)\ll R^\rk\vol(\bdry_1A)$.

Let $D\subset\iii\aaa_0^*$ be a $W$-invariant bounded measurable set.
For any $f\in\Test$ define $f_D\in\Test$ by
\[
f_D(g)=\int_Df_\mu(g)\ d\mu, \ \ g\in G_\infty.
\]
Similarly, for any $g\in\Pspace$ let $g_D\in\Pspace$ given by
\[
g_D(\lambda)=\int_Dg_\mu(\lambda)\ d\mu=\int_Dg(\lambda-\mu)\ d\mu,\ \ \lambda\in\aaa_{0,\C}^*.
\]
Thus, if $\sph f=g$ then $\sph f_D=g_D$.

\begin{lemma} \label{lem: appid}
Let $D\subset\iii\aaa_0^*$ be a $W$-invariant bounded measurable set and $\chi_D$ its characteristic function.
Let $f\in\Pspace$ with $\int_{\iii\aaa_0^*}f(\lambda)\ d\lambda=1$. Then for any $n>0$ and $\lambda\in\iii\aaa_0^*$ we have
\[
\abs{f_D(\lambda)-\chi_D(\lambda)}\ll_{f,n}\begin{cases}(1+\dist(\lambda,D^c))^{-n},&\lambda\in D,\\
(1+\dist(\lambda,D))^{-n},&\text{otherwise.}\end{cases}
\]
\end{lemma}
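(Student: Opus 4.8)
The plan is to evaluate $f_D(\lambda) - \chi_D(\lambda)$ directly and to bound the error by a tail integral. First I would record the two basic identities. Since $D$ is $W$-invariant, the change of variables $\mu\mapsto w^{-1}\mu$ shows that $f_D(\lambda)=\int_D f_\mu(\lambda)\,d\mu=\int_D f(\lambda-\mu)\,d\mu$, exactly as noted just before the statement. Second, because $f\in\Pspace$, its restriction to $\iii\aaa_0^*$ (where $\Re\lambda=0$) satisfies $\abs{f(\lambda)}\ll_{f,N}(1+\norm{\lambda})^{-N}$ for every $N>0$; in particular $f$ is integrable on $\iii\aaa_0^*$, and by the normalization hypothesis together with translation invariance of the (Lebesgue-type) measure on $\iii\aaa_0^*$ we get $\int_{\iii\aaa_0^*}f(\lambda-\mu)\,d\mu=1$ for every $\lambda$.

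Next I would split into the two cases. If $\lambda\in D$, subtracting the two identities gives $f_D(\lambda)-1=-\int_{D^c}f(\lambda-\mu)\,d\mu$; if $\lambda\notin D$, the first identity reads $f_D(\lambda)=\int_D f(\lambda-\mu)\,d\mu$. In the first case every $\mu\in D^c$ satisfies $\norm{\lambda-\mu}\ge\dist(\lambda,D^c)$, and in the second case every $\mu\in D$ satisfies $\norm{\lambda-\mu}\ge\dist(\lambda,D)$. Hence, after the measure-preserving substitution $\nu=\lambda-\mu$, in both cases $\abs{f_D(\lambda)-\chi_D(\lambda)}\le\int_{\norm{\nu}\ge r}\abs{f(\nu)}\,d\nu$, where $r=\dist(\lambda,D^c)$ in the first case and $r=\dist(\lambda,D)$ in the second.

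Finally I would estimate this tail integral: applying the rapid decay of $f$ with exponent $N=n+\rk$ (recall $\dim\aaa_0=\rk$) and integrating in polar coordinates yields $\int_{\norm{\nu}\ge r}\abs{f(\nu)}\,d\nu\ll_{f,n}\int_r^\infty(1+t)^{-n-\rk}t^{\rk-1}\,dt\ll_n(1+r)^{-n}$, which is precisely the asserted bound. I do not expect any genuine obstacle here: morally this is just the statement that convolving $\chi_D$ with an $L^1$-normalized, rapidly decaying kernel changes it by a rapidly decaying amount away from $\partial D$. The only points that need a little care are the reduction to $\int_D f(\lambda-\mu)\,d\mu$ via the $W$-invariance of $D$, and the use of the Paley--Wiener estimate both to justify the normalization identity and to run the tail bound.
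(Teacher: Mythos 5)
Your proof is correct and it amounts to exactly what the paper has in mind: the paper's entire proof is the single line ``This follows immediately from the rapid decay of $f$ on $\iii\aaa_0^*$,'' and you have simply filled in the details. The reduction $f_D(\lambda)=\int_D f(\lambda-\mu)\,d\mu$ via the $W$-invariance of $D$, the normalization identity $\int_{\iii\aaa_0^*}f(\lambda-\mu)\,d\mu=1$, the two-case split writing the error as a tail integral over $D^c$ (resp.\ $D$), and the polar-coordinate estimate $\int_{\norm{\nu}\ge r}\abs{f(\nu)}\,d\nu\ll_{f,n}(1+r)^{-n}$ using the Paley--Wiener bound with exponent $n+\rk$ are all sound and complete.
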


This follows immediately from the rapid decay of $f$ on $\iii\aaa_0^*$.

\begin{corollary} \label{cor: appid}
Let $\nu$ be a measure on $\iii\aaa_0^*/W$ satisfying
\[
\nu(W\ball_1(\lambda))\ll(1+\norm{\lambda})^m,\ \ \ \lambda\in\iii\aaa_0^*,
\]
for some $m\ge0$.
Then for any $W$-invariant, bounded measurable set $D$ in $\iii\aaa_0^*$ and any $f\in\Pspace$ such that $\int_{\iii\aaa_0^*}f(\lambda)\ d\lambda=1$ we have
\[
\abs{\nu(D)-\nu(f_D)}\ll_f\vol(\bdry_1D)(1+\norm{D})^m,
\]
where $\norm{D}=\sup_{\lambda\in D}\norm{\lambda}$.
\end{corollary}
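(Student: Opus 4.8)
The plan is to bound $\abs{\nu(D)-\nu(f_D)}$ by $\nu(\abs{f_D-\chi_D})$, use Lemma \ref{lem: appid} for the rapid pointwise decay of $f_D-\chi_D$ away from the ``boundary'' of $D$, and then estimate the $\nu$-mass of successive unit-width shells around that boundary by combining the covering estimate \eqref{eq: cvr} with the growth hypothesis on $\nu$. We may assume $D$ is a nonempty proper subset of $\iii\aaa_0^*$, the remaining cases being trivial. Since $f_D$ and $\chi_D$ are $W$-invariant they descend to $\iii\aaa_0^*/W$; granting for the moment that $f_D\in L^1(\nu)$ (which follows from the estimates below together with $\nu(D)<\infty$, the latter being immediate from $D$ bounded and the hypothesis on $\nu$), we have
\[
\abs{\nu(D)-\nu(f_D)}\le\int_{\iii\aaa_0^*}\abs{f_D(\lambda)-\chi_D(\lambda)}\ d\nu(\lambda).
\]
By Lemma \ref{lem: appid}, for every $n>0$ the right-hand side is $\ll_{f,n}\int_{\iii\aaa_0^*}(1+r(\lambda))^{-n}\ d\nu(\lambda)$, where $r(\lambda)=\dist(\lambda,D^c)$ if $\lambda\in D$ and $r(\lambda)=\dist(\lambda,D)$ otherwise.

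The key elementary observation is that $\{\lambda:r(\lambda)<k\}\subset\bdry_kD$ for every integer $k\ge1$: if $\lambda\in D$ with $\dist(\lambda,D^c)<k$ then $\ball_k(\lambda)$ meets $D^c$ and also contains $\lambda\in D$, so $\lambda\in\bdry_kD$ by definition, and the case $\lambda\notin D$ is symmetric. Decomposing $\iii\aaa_0^*$ into the (measurable) sets $\{k\le r<k+1\}$, $k\ge0$, on which $(1+r)^{-n}\le(k+1)^{-n}$, we obtain
\[
\int_{\iii\aaa_0^*}(1+r(\lambda))^{-n}\ d\nu(\lambda)\le\sum_{k\ge0}(k+1)^{-n}\,\nu(\bdry_{k+1}D).
\]

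It remains to bound $\nu(\bdry_kD)$. Every $\lambda\in\bdry_kD$ satisfies $\dist(\lambda,D)<k$, hence $\norm{\lambda}\le\norm{D}+k$, so $\bdry_kD$ is a $W$-invariant bounded set. By \eqref{eq: cvr} it is covered by $O(k^\rk\vol(\bdry_1D))$ balls of radius $1$, whose centers, being within distance $1$ of $\bdry_kD$, have norm $\ll\norm{D}+k$; since $\bdry_kD$ is $W$-invariant it lies in the union of the corresponding sets $W\ball_1(c)$, so applying $\nu(W\ball_1(c))\ll(1+\norm{c})^m\ll(1+\norm{D}+k)^m$ and summing gives
\[
\nu(\bdry_kD)\ll k^\rk\vol(\bdry_1D)(1+\norm{D}+k)^m.
\]
Substituting this into the previous display and using $(1+\norm{D}+k)^m\le(1+\norm{D})^m(1+k)^m$, we find that for any $n>\rk+m+1$
\[
\abs{\nu(D)-\nu(f_D)}\ll_{f,n}\vol(\bdry_1D)(1+\norm{D})^m\sum_{k\ge1}k^{\rk+m-n}\ll\vol(\bdry_1D)(1+\norm{D})^m,
\]
and fixing once and for all such an $n$ (depending only on $m$ and on $G$ through $\rk$) absorbs the dependence on $n$ into the stated constant. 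I do not expect a serious obstacle: the only points requiring care are the identification of the sublevel sets of $r$ with the sets $\bdry_kD$ and the control on the norms of the covering balls' centers, everything else being a routine geometric-series estimate.
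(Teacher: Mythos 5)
Your proof is correct and follows essentially the same route as the paper's: both bound $\abs{\nu(D)-\nu(f_D)}$ by a weighted sum $\sum_{k\ge1}k^{-n}\nu(\bdry_kD)$ via Lemma~\ref{lem: appid}, and then estimate each $\nu(\bdry_kD)$ by combining the covering bound~\eqref{eq: cvr} with the growth hypothesis on $\nu$, choosing $n$ large enough (you need $n>\rk+m+1$; the paper states $n\ge\rk+m+2$). The only difference is that you spell out the intermediate step $\{\lambda:r(\lambda)<k\}\subset\bdry_kD$ and the shell decomposition, which the paper leaves implicit.
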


\begin{proof}
By Lemma \ref{lem: appid}, for every $n>0$ we have
\[
\abs{\nu(D)-\nu(f_D)}\ll_{f,n}\sum_{k=1}^\infty\nu(\bdry_kD)k^{-n}.
\]
On the other hand, by \eqref{eq: cvr}, for any $k\ge1$, $\bdry_kD$ is covered by $O(k^\rk\vol(\bdry_1D))$ balls of radius $1$
and we may as well assume that the centers of these balls lie in $\ball(0,\norm{D}+k)$.
Hence, by the assumption on $\nu$ we have
\[
\sum_{k=1}^\infty\nu(\bdry_kD)k^{-n}\ll\vol(\bdry_1D)(1+\norm{D})^m
\]
provided that $n\ge \rk+m+2$. The corollary follows.
\end{proof}

\begin{theorem} \label{TheoremWeyl}
There exists $\delta>0$ such that for any open subgroup $K$ of $G(\A_{\fin})$ and any $W$-invariant,
bounded measurable set $D$ in $\iii\aaa_0^*$ we have
\[
\abs{\mult_{\cusp}^K(D)-v_K\mu_{\pl}(D)}\ll_K\vol(\bdry_1D)(1+\norm{D})^{\dm-\rk}+(1+\norm{D})^{\dm-\delta}.
\]
In particular, if the boundary of $D$ is rectifiable (or more generally, if the $(\rk-1)$-dimensional
upper Minkowski content of $D$ is finite), then for all $t\ge1$ we have
\[
\abs{\mult_{\cusp}^K(t D)-v_K\mu_{\pl}(tD)}\ll_{K, D} t^{\dm-\delta}.
\]
\end{theorem}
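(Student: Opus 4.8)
The plan is to combine the key local cuspidal bound (Theorem \ref{thm: locbnd}), which controls $\tr R_{\cusp}(f_\mu\otimes\one_K)$ — equivalently $\nu_{\cusp}^K(\sph f_\mu)$ up to the archimedean real-part shift — with a smoothing-and-summation argument to pass from the test function $f_\mu$ to the sharp-cutoff counting function $\mult_{\cusp}^K(D)$. The bridge between the two is Corollary \ref{cor: appid}: since by Lemma \ref{lem: bndnt} the measure $\nu_{\cusp}^K$ satisfies $\nu_{\cusp}^K(W\ball_1(\mu))\ll_K\tilde\beta(\mu)\ll(1+\norm{\mu})^{\dm-\rk}$, we may take $m=\dm-\rk$ in Corollary \ref{cor: appid} and obtain, for any $f\in\Pspace$ with $\int_{\iii\aaa_0^*}f=1$,
\[
\abs{\nu_{\cusp}^K(D)-\nu_{\cusp}^K(f_D)}\ll_f\vol(\bdry_1D)(1+\norm{D})^{\dm-\rk}.
\]

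Next I would estimate $\nu_{\cusp}^K(f_D)$. Writing $f_D=\int_Df_\mu\,d\mu$ and using that $\nu_{\cusp}^K(\sph f_\mu)$ differs from $\nu_{\cusp,\temp}^K(\sph f_\mu)$ by a non-tempered term controlled in Corollary \ref{cor: lclbnd}, one applies Corollary \ref{cor: lclbnd} (or directly Theorem \ref{thm: locbnd}) pointwise in $\mu$ and integrates over $D$:
\[
\abs{\nu_{\cusp,\temp}^K(\sph f_\mu)-v_K f_\mu(e)}\ll_{f,K}(1+\norm{\mu})^{\dm-\rk-\delta},
\]
so that $\int_D$ of the left side is $\ll_{f,K}\vol(D)(1+\norm{D})^{\dm-\rk-\delta}\ll_{f,K}(1+\norm{D})^{\dm-\delta}$ (since $\vol(D)\ll(1+\norm{D})^\rk$). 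It remains to identify the main term: $\int_Df_\mu(e)\,d\mu=f_D(e)=\int_{\iii\aaa_0^*}\sph^{-1}$-side evaluated via the Plancherel formula $f_D(e)=\int_{\iii\aaa_0^*}(\sph f_D)(\lambda)\beta(\lambda)\,d\lambda=\int_{\iii\aaa_0^*}g_D(\lambda)\beta(\lambda)\,d\lambda$, and since $\int_{\iii\aaa_0^*}f=1$ and $\beta$ is $W$-invariant, a change of variables and Fubini give $f_D(e)=\mu_{\pl}(f * \chi_D\text{-smoothing})$, which differs from $\mu_{\pl}(D)$ by $\ll\vol(\bdry_1D)(1+\norm{D})^{\dm-\rk}$ by applying Corollary \ref{cor: appid} again to the (absolutely continuous, polynomially bounded) measure $\mu_{\pl}$. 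Adding up the three error contributions yields the first displayed inequality of the theorem.

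For the second assertion, I would substitute $tD$ for $D$ and track the scaling. If the $(\rk-1)$-dimensional upper Minkowski content of $\bdry D$ is finite, then $\vol(\bdry_1(tD))\ll_D t^{\rk-1}$ for $t\ge1$ (the unit neighbourhood of the boundary of $tD$ has volume comparable to $t^{\rk-1}$), while $\norm{tD}\ll_D t$. Hence the first term in the bound becomes $\ll_{K,D}t^{\rk-1}\cdot t^{\dm-\rk}=t^{\dm-1}\le t^{\dm-\delta}$ once $\delta\le1$ (and $\delta<1$ anyway by Proposition \ref{prop: mubnd}), and the second term is $\ll_{K,D}t^{\dm-\delta}$, giving the claimed $O_{K,D}(t^{\dm-\delta})$.

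The main obstacle is the bookkeeping in identifying and estimating the main term $f_D(e)$ against $v_K\mu_{\pl}(D)$: one must be careful that the normalization $\int_{\iii\aaa_0^*}f=1$, the $W$-invariance conventions for the measures on $\iii\aaa_0^*/W$ versus $\iii\aaa_0^*$, and the archimedean shift by $\Re\prm_{\pi_\infty}$ built into $\nu_{\cusp}^K$ versus $\mult_{\cusp}^K$ all match up, so that the smoothed Plancherel mass $f_D(e)$ really does approximate $\mu_{\pl}(D)$ with only a boundary-layer error of the stated order, rather than a spurious bulk discrepancy. Once that identification is pinned down, the rest is routine summation of rapidly decaying tails exactly as in the proofs of Theorem \ref{thm: locbnd} and Corollary \ref{cor: lclbnd}.
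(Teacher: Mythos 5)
Your proposal follows the paper's proof closely: fix $f\in\Pspace$ with $\int_{\iii\aaa_0^*}f=1$, integrate the pointwise bound of Corollary \ref{cor: lclbnd} over $\mu\in D$ to compare $\nu_{\cusp,\temp}^K(f_D)$ with $v_K\mu_{\pl}(f_D)$, and apply Corollary \ref{cor: appid} to remove the smoothing on both the spectral and Plancherel sides. The one slip is in your first step: you apply Corollary \ref{cor: appid} to $\nu_{\cusp}^K$, but the theorem concerns $\mult_{\cusp}^K(D)$, which for $D\subset\iii\aaa_0^*$ equals $\nu_{\cusp,\temp}^K(D)$, not $\nu_{\cusp}^K(D)$ (the latter also counts non-tempered cuspidal $\pi$ with $\Im\prm_{\pi_\infty}\in D$). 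Simply apply Corollary \ref{cor: appid} to $\nu_{\cusp,\temp}^K$ instead — the hypothesis $\nu_{\cusp,\temp}^K(W\ball_1(\mu))\ll_K\tilde\beta(\mu)$ holds a fortiori since $\nu_{\cusp,\temp}^K\le\nu_{\cusp}^K$ — and the three estimates chain directly; the ``archimedean shift'' worry you flag at the end then never arises. Your scaling analysis for $tD$ in the second assertion is correct.
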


Note that $\mu_{\pl}(tD) = C_D t^{\dm} + O (t^{\dm-1})$ for $t \to \infty$.

\begin{proof}
Fix $f\in\Pspace$ such that $\int_{\iii\aaa_0^*}f(\lambda)\ d\lambda=1$.
By Corollary \ref{cor: lclbnd}
\[
\abs{\nu_{\cusp,\temp}^K(f_{\mu})-v_K \mu_{\pl}(f_{\mu})}\ll_{f,K} (1+\norm{\mu})^{\dm-\rk-\delta}.
\]
Integrating over $\mu\in D$ we get
\[
\abs{\nu^K_{\cusp,\temp}(f_D)-v_K \mu_{\pl}(f_D)}\ll_{f,K} (1+\norm{D})^{\dm-\delta}.
\]
On the other hand, by Corollary \ref{cor: appid} we have
\[
\abs{\nu_{\cusp,\temp}^K(f_D)-\mult_{\cusp}^K(D)}\ll_{f,K} \vol(\bdry_1D)(1+\norm{D})^{\dm-\rk}
\]
and
\[
\abs{\mu_{\pl}(f_D)-\mu_{\pl}(D)}\ll_f\vol(\bdry_1D)(1+\norm{D})^{\dm-\rk}.
\]
The theorem follows.
\end{proof}

For the balls $\ball_t(0)$, $t \ge 1$, we obtain:

\begin{corollary} \label{cor: stdweylaw}
\begin{equation} \label{EqnWeylLaw}
\mult_{\cusp}^K(\ball_t(0)) = \frac{v_K t^\dm}{(4 \pi)^{\dm/2}\Gamma(\frac{\dm}{2}+1)} + O_K (t^{\dm-\delta})
\end{equation}
and
\begin{equation} \label{EqnWeylLaw2}
\mult_{\cusp}^K(\{ \lambda \in \aaa_{0,\C}^* :
\norm{\Im \lambda}^2 - \norm{\Re \lambda}^2 + \norm{\rhosymb}^2 \le t^2 \})
= \frac{v_K t^\dm}{(4 \pi)^{\dm/2}\Gamma(\frac{\dm}{2}+1)} + O_K
(t^{\dm-\delta}).
\end{equation}
Thus, the Weyl law with remainder holds for the cuspidal spectrum of the adelic quotient $G (\Q) \bs G (\A) / \K_\infty K$.
If in addition $G$ is simply connected, then we obtain
\[
N_{\Gamma \bs G (\R) / \K_\infty,\cusp}(t) = \frac{\vol (\Gamma \bs G (\R) / \K_\infty) t^\dm}{(4\pi)^{\dm/2} \Gamma (\frac{\dm}{2} + 1)} + O_K (t^{\dm-\delta})
\]
for $\Gamma = G (\Q) \cap K$
by strong approximation (cf. Theorem \ref{thm: basicweylaw}).
\end{corollary}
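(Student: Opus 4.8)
The plan is to read off all three assertions from Theorem~\ref{TheoremWeyl} together with the classical asymptotics of the spherical Plancherel density; essentially no new analysis is needed. For \eqref{EqnWeylLaw} I would apply the ``in particular'' clause of Theorem~\ref{TheoremWeyl} with $D=\ball_1(0)\subset\iii\aaa_0^*$, which is $W$-invariant and whose boundary is a Euclidean sphere, hence of finite $(\rk-1)$-dimensional Minkowski content. Since $\ball_t(0)=tD$, this gives $\mult_{\cusp}^K(\ball_t(0))=v_K\,\mu_{\pl}(\ball_t(0))+O_K(t^{\dm-\delta})$, so it remains to evaluate $\mu_{\pl}(\ball_t(0))$. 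Here I would use the explicit product formula for $\beta$ from \S\ref{sec: mesinfty}: replacing each $\tanh$-factor by $1$ turns $\beta$ into a density $\pi_0$ that is homogeneous of degree $\dm-\rk$, and $0\le\pi_0-\beta$ is exponentially small off the root hyperplanes, so that $\int_{\ball_t(0)}\beta\,d\lambda=t^{\dm}\int_{\ball_1(0)}\pi_0\,d\lambda+O(t^{\dm-1})$. The remaining identity $\int_{\ball_1(0)}\pi_0=\frac{1}{(4\pi)^{\dm/2}\Gamma(\dm/2+1)}$ is precisely the computation of Duistermaat--Kolk--Varadarajan \cite{MR532745}, and \eqref{EqnWeylLaw} follows.

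For \eqref{EqnWeylLaw2}, write $B_t$ for the set appearing there; one may assume $t$ large, the bounded range being trivial by Lemma~\ref{lem: bndnt}. Every cuspidal archimedean parameter lies in $\aaa_{0,\unt}^*\subset\aaa_{0,\herm}^*$, so it satisfies $\sprod{\Re\prm_{\pi_\infty}}{\Im\prm_{\pi_\infty}}=0$ and $\norm{\Re\prm_{\pi_\infty}}\le\norm{\rhosymb}$; in particular the defining quantity of $B_t$ is real and equals the Laplace eigenvalue of $\pi_\infty$. I would split $\mult_{\cusp}^K(B_t)$ according to whether $\pi_\infty$ is tempered. For tempered $\pi_\infty$ one has $\Re\prm_{\pi_\infty}=0$, so the condition becomes $\Im\prm_{\pi_\infty}\in\ball_s(0)$ with $s=\sqrt{t^2-\norm{\rhosymb}^2}$, and the tempered contribution is $\mult_{\cusp}^K(\ball_s(0))$, which by \eqref{EqnWeylLaw} and $(t^2-\norm{\rhosymb}^2)^{\dm/2}=t^{\dm}+O(t^{\dm-2})$ equals $\frac{v_K t^{\dm}}{(4\pi)^{\dm/2}\Gamma(\dm/2+1)}+O_K(t^{\dm-\delta})$. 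A non-tempered $\pi_\infty$ with $\prm_{\pi_\infty}\in B_t$ has $\norm{\Im\prm_{\pi_\infty}}\ll t$ and, by Lemma~\ref{lem: bndnt}, satisfies $\reg(\Im\prm_{\pi_\infty})\le2$; covering the $2$-neighbourhood of the root hyperplanes inside $\ball_{O(t)}(0)$ by $O(t^{\rk-1})$ unit balls and invoking $\nu_{\cusp,\nt}^K(W\ball_1(\mu))\ll_K(1+\norm{\mu})^{\dm-\rk-1}$ bounds this part by $O_K(t^{\dm-2})=O_K(t^{\dm-\delta})$. This yields \eqref{EqnWeylLaw2}, and the statement about the adelic quotient $G(\Q)\bs G(\A)/\K_\infty K$ is just a reformulation of \eqref{EqnWeylLaw}--\eqref{EqnWeylLaw2}, the full algebra of invariant differential operators being already encoded in the parametrization by $\aaa_{0,\C}^*/W$.

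For the manifold statement, when $G$ is simply connected it is semisimple and $\Q$-isotropic (being split of positive rank) with $G(\R)$ noncompact, so strong approximation gives $G(\A_{\fin})=G(\Q)K$ and a measure-preserving identification $G(\Q)\bs G(\A)/\K_\infty K\cong\Gamma\bs G(\R)/\K_\infty$ with $\Gamma=G(\Q)\cap K$, under which spherical cuspidal automorphic representations correspond to cuspidal joint eigenfunctions on $\Gamma\bs G(\R)/\K_\infty$, with $\pi_\lambda$ contributing the eigenvalue $\norm{\Im\lambda}^2-\norm{\Re\lambda}^2+\norm{\rhosymb}^2$. A standard volume computation in the normalization fixed above gives $v_K=\vol(\Gamma\bs G(\R)/\K_\infty)$, the factor $\abs{Z(\Q)\cap K}$ in the definition of $v_K$ accounting for the finite centre, which acts trivially on $G(\R)/\K_\infty$. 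Hence $N_{\Gamma\bs G(\R)/\K_\infty,\cusp}(t)=\mult_{\cusp}^K(B_t)$ and the claim is exactly \eqref{EqnWeylLaw2}.

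\textbf{Main obstacle.} The only step with genuine content is the identification of the leading constant of $\mu_{\pl}(\ball_t(0))$ with $\frac{1}{(4\pi)^{\dm/2}\Gamma(\dm/2+1)}$. This is the classical Duistermaat--Kolk--Varadarajan asymptotic, but obtaining it in exactly this normalized form forces one to track carefully the conventions of \S\ref{sec: mesinfty} --- the Haar measure on $G_\infty$, the Killing-form Euclidean structure on $\aaa_0$, and Harish-Chandra's $\cfunc$-function. Everything else --- the reduction to balls, the disposal of the non-tempered cuspidal part via Lemma~\ref{lem: bndnt}, and the passage through strong approximation --- is routine.
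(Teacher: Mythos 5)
Your proposal is correct and follows essentially the same route as the paper's (very terse) proof: invoke the ``in particular'' clause of Theorem~\ref{TheoremWeyl} for balls and the Duistermaat--Kolk--Varadarajan computation of $\mu_{\pl}(\ball_t(0))$ for \eqref{EqnWeylLaw}, deduce \eqref{EqnWeylLaw2} from \eqref{EqnWeylLaw} by using $\norm{\Re\lambda}\le\norm{\rhosymb}$ on $\aaa_{0,\unt}^*$ together with the non-tempered bound of Lemma~\ref{lem: bndnt}, and pass to the locally symmetric space by strong approximation. You merely spell out the steps (the sandwich between $\ball_{\sqrt{t^2-\norm{\rhosymb}^2}}(0)$ and $\ball_t(0)$, the $O(t^{\dm-2})$ non-tempered contribution, and the volume identification $v_K=\vol(\Gamma\bs G(\R)/\K_\infty)$) that the paper leaves implicit.
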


Indeed, the first statement follows from the computation of $\mu_{\pl}(\ball_t(0))$
(cf. \cite{MR532745}*{\S8}).
The second statement follows from the first one since $\norm{\Re\lambda}\le\norm{\rhosymb}$
for any $\lambda\in\aaa_{0,\unt}^*$.
Finally, note that the Laplace eigenvalue corresponding to an archimedean parameter
$\lambda$ is $\norm{\Im \lambda}^2 - \norm{\Re \lambda}^2 + \norm{\rhosymb}^2$.

Finally, we give a generalization of Theorem \ref{TheoremWeyl} incorporating Hecke operators
$\tau \in \Hecke^{\bad}$.
Set
\[
\mult_{\cusp}^K(A, \tau)=\sum_{\pi\in\Pi_{\cusp}(G)^{\K_\infty K}:\prm_{\pi_\infty}\in A}\tr \left. \tau \right|_{(\AF^G_{\cusp,\pi})^{\K_\infty K}}
\]
for any bounded, $W$-invariant subset $A$ of $\aaa_{0,\C}^*$.

\begin{theorem} \label{TheoremWeylHecke}
There exists $\delta>0$ such that for any open subgroup $K$ of $G(\A_{\fin})$,
any $\tau \in \Hecke^{\bad}$ and any $W$-invariant,
bounded measurable set $D$ in $\iii\aaa_0^*$ we have
\[
\abs{\mult_{\cusp}^K(D, \tau)-v_K \tau (e) \mu_{\pl}(D)}\ll_K
\norm{\tau}_1 \big(\vol(\bdry_1D)(1+\norm{D})^{\dm-\rk}+
(1 + \maxsupp (\tau))^{\rk} (1+\norm{D})^{\dm-\delta}\big).
\]
In particular, if the $(\rk-1)$-dimensional upper Minkowski content of $D$ is finite,
(e.g., if the boundary of $D$ is rectifiable), then for all $t\ge1$ we have
\[
\abs{\mult_{\cusp}^K(t D, \tau)-v_K  \tau (e) \mu_{\pl}(tD)}
\ll_{K, D} \norm{\tau}_1 (1+ \maxsupp (\tau))^{\rk} t^{\dm-\delta}.
\]
\end{theorem}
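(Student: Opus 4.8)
The plan is to mirror the proof of Theorem~\ref{TheoremWeyl} step by step, carrying the Hecke operator $\tau\in\Hecke^\bad$ through every stage. Since we only need an asymptotic with an error term, and not an extraction of a main term by positivity, $\tau$ will enter only linearly, via the trivial bound $\abs{\hat\tau(\sigma)}\le\norm{\tau}_1$ for every unitary representation $\sigma$ of $G(\A^\bad)$. The core is the Hecke‑twisted analogue of Theorem~\ref{thm: locbnd}: with the same $\delta>0$, for every fixed $f\in\Test$, every $\mu\in\iii\aaa_0^*$ and every $\tau\in\Hecke^\bad$,
\[
\abs{\tr R_{\cusp}(f_\mu\otimes\one_K\otimes\tau)-v_K\,\tau(e)\,f_\mu(e)}\ll_{f,K}\norm{\tau}_1\,(1+\maxsupp(\tau))^\rk\,\frac{\tilde\beta(\mu)}{(1+\norm{\mu})^\delta}.
\]

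To prove this I would start, as in the proof of Theorem~\ref{thm: locbnd}, from the identity $J^T(f_\mu\otimes\one_K\otimes\tau)=\mathbb{J}^T(f_\mu\otimes\one_K\otimes\tau)$, which by \eqref{EqnGeometricSpectral} holds once $\regd(T)>C_1(1+\max_{x\in K}\log\norm{x}+R+\maxsupp(\tau))$, with $R$ the support radius of the fixed $f$. Accordingly I would choose $T\in\aaa_0^{\pls}$ just above this threshold, in a fixed direction, so that $\norm{T}\ll_{f,K}1+\maxsupp(\tau)$; this is the one place where the exponent $\rk$ in the factor $(1+\maxsupp(\tau))^\rk$ is produced, the $T$‑dependence of the bounds below being exactly $(1+\norm{T})^\rk$. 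On the geometric side, Corollary~\ref{cor: finmatz} with $F=f_\mu$ and $h=\tau$ bounds $\abs{\mathbb{J}^T(f_\mu\otimes\one_K\otimes\tau)-\tilde v_K\tau(e)f_\mu(e)}$ by $\norm{\tau}_1(1+\norm{T})^\rk\log(2+\norm{\mu})^\rk\tilde\beta(\mu)/D(\mu)$. On the spectral side, $J^T(f_\mu\otimes\one_K\otimes\tau)-\tr R_{\cusp}(f_\mu\otimes\one_K\otimes\tau)$ is the sum over $P\neq G$ of the terms $J_P^T$ together with the residual part of $J_G^T$; in each summand the extra factor $\hat\tau(I_P(\pi^\bad,\lambda))$ (for $\lambda\in\iii\aaa_M^*$), resp.\ $\hat\tau(\pi^\bad)$ for $\pi\in\Pi_{\resid}(G)$, is a scalar of modulus $\le\norm{\tau}_1$ because the representations in play are unitary. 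Bounding $\abs{\sph f_\mu(\prm_{\pi_\infty}+\lambda)}$ by $\mxml(\sph f_\mu)(\Im\prm_{\pi_\infty}+\lambda)$ (legitimate since $\norm{\Re\prm_{\pi_\infty}}\le\norm{\rhosymb}$) then gives
\[
\abs{J^T(f_\mu\otimes\one_K\otimes\tau)-\tr R_{\cusp}(f_\mu\otimes\one_K\otimes\tau)}\le\norm{\tau}_1\,\nu_{\noncusp}^{T,K}(\mxml(\sph f_\mu)),
\]
and the right side is $\ll_{f,K}\norm{\tau}_1\tilde\beta(\mu)(1+\norm{T})^\rk/(1+\norm{\mu})^\delta$ by \eqref{UpperboundNuNoncusp}, whose proof (via Proposition~\ref{prop: mubnd} and Lemma~\ref{lem: bndnt}) makes no use of $\tau$. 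Combining the three inputs, absorbing the geometric‑side error $\tilde\beta(\mu)/D(\mu)$ into the larger non‑cuspidal one (shrinking $\delta$), and recalling $\tilde v_K=v_K$ as in the proof of Theorem~\ref{thm: locbnd}, yields the displayed local estimate.

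Next I would pass to the tempered cuspidal spectrum exactly as in Corollary~\ref{cor: lclbnd}: writing $\nu_{\cusp,\temp}^K(\tau)$ for the signed measure $\sum\hat\tau(\pi^\bad)\dim(\AF^G_{\cusp,\pi})^{\K_\infty K}\delta_{\prm_{\pi_\infty}}$ over $\pi\in\Pi_{\cusp}(G)^{\K_\infty K}$ with $\prm_{\pi_\infty}\in\iii\aaa_0^*$, so that $\nu_{\cusp,\temp}^K(\tau)(D)=\mult_{\cusp}^K(D,\tau)$ for $D\subset\iii\aaa_0^*$. The difference between $\tr R_{\cusp}(f_\mu\otimes\one_K\otimes\tau)$ and $\nu_{\cusp,\temp}^K(\tau)(\sph f_\mu)$ involves only non‑tempered cuspidal $\pi$, hence is $\le\norm{\tau}_1\,\nu_{\cusp,\nt}^K(\mxml(\sph f_\mu))\ll_f\norm{\tau}_1(1+\norm{\mu})^{\dm-\rk-1}$ by Lemma~\ref{lem: bndnt} and rapid decay; with the local estimate and $\tilde\beta(\mu)\ll(1+\norm{\mu})^{\dm-\rk}$ (and $\delta\le1$) this gives $\abs{\nu_{\cusp,\temp}^K(\tau)(\sph f_\mu)-v_K\tau(e)f_\mu(e)}\ll_{f,K}\norm{\tau}_1(1+\maxsupp(\tau))^\rk(1+\norm{\mu})^{\dm-\rk-\delta}$. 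Now I would fix once and for all $f\in\Test$ with $\int_{\iii\aaa_0^*}\sph f(\lambda)\,d\lambda=1$ and run the integration argument of Theorem~\ref{TheoremWeyl}: integrating over $\mu\in D$ (using $f_D(e)=\mu_{\pl}(\sph f_D)$ and $\vol(D)\ll(1+\norm{D})^\rk$) gives $\abs{\nu_{\cusp,\temp}^K(\tau)(\sph f_D)-v_K\tau(e)\mu_{\pl}(\sph f_D)}\ll_K\norm{\tau}_1(1+\maxsupp(\tau))^\rk(1+\norm{D})^{\dm-\delta}$; since the total variation of $\nu_{\cusp,\temp}^K(\tau)$ on any unit ball is $\le\norm{\tau}_1$ times that of $\nu_{\cusp}^K$, hence $\ll_K\norm{\tau}_1(1+\norm{\lambda})^{\dm-\rk}$ by Lemma~\ref{lem: bndnt} and \eqref{eq: obvs}, Corollary~\ref{cor: appid} (applied to this total variation and to $\mu_{\pl}$) replaces $\sph f_D$ by $D$ at the cost of $\ll_K\norm{\tau}_1\vol(\bdry_1D)(1+\norm{D})^{\dm-\rk}$. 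Recalling $\abs{\tau(e)}\le\norm{\tau}_1$ (as $\tau$ is bi‑$\K^\bad$‑invariant with $\vol\K^\bad=1$) and $\nu_{\cusp,\temp}^K(\tau)(D)=\mult_{\cusp}^K(D,\tau)$, the triangle inequality gives the first assertion; the second follows by the rescaling $D\mapsto tD$, using $\vol(\bdry_1(tD))=t^\rk\vol(\bdry_{1/t}D)\ll_D t^{\rk-1}$ when the $(\rk-1)$‑dimensional upper Minkowski content of $D$ is finite, together with $t^{\dm-1}\le t^{\dm-\delta}$.

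The only genuine point of care is the bookkeeping of the support of $\tau$: one must exploit the flexibility in \eqref{EqnGeometricSpectral} to keep $\regd(T)$—and hence $\norm{T}$—linear in $1+\maxsupp(\tau)$, and invoke Theorem~\ref{thm: finmatz}/Corollary~\ref{cor: finmatz} in the form with $T$‑dependence precisely $(1+\norm{T})^\rk$, so that the final $\maxsupp(\tau)$‑dependence is no worse than $(1+\maxsupp(\tau))^\rk$. Otherwise the argument is a faithful transcription of the proofs of Theorems~\ref{thm: locbnd} and~\ref{TheoremWeyl}, the single new ingredient being that, because no positivity is invoked, the Hecke operator contributes only the factor $\norm{\tau}_1$ rather than $\norm{\tau}_1^2$.
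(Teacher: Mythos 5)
Your argument reproduces the paper's proof of Theorem \ref{TheoremWeylHecke} step by step: you first establish the Hecke-twisted local bound \eqref{RcuspwithHecke} by combining \eqref{EqnGeometricSpectral} (with $\regd(T)\gg 1+\maxsupp(\tau)$, whence the $(1+\maxsupp(\tau))^\rk$ factor), Corollary \ref{cor: finmatz} with $h=\tau$, and the $\tau$-independent bound \eqref{UpperboundNuNoncusp} multiplied by $\norm{\tau}_1$, and then pass to the tempered part and integrate over $D$ exactly as in Corollary \ref{cor: lclbnd}, Corollary \ref{cor: appid} and Theorem \ref{TheoremWeyl}. This is precisely the paper's approach, and your bookkeeping of the $\norm{\tau}_1$ and $\maxsupp(\tau)$ dependencies is correct.
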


\begin{proof}
We first prove the analog of Theorem \ref{thm: locbnd}: for any $f\in\Test$ and $\mu\in\iii\aaa_0^*$ we have
\begin{equation} \label{RcuspwithHecke}
\abs{\tr R_{\cusp}(f_\mu\otimes \one_K \otimes\tau)-v_K \tau (e) f_\mu(e)}\ll_{f,K}
\norm{\tau}_1 (1 + \maxsupp (\tau))^{\rk}
\frac{\tilde\beta(\mu)}{(1+\norm{\mu})^\delta}.
\end{equation}
As in the proof of Theorem \ref{thm: locbnd}, we start with
\[
\abs{\tr R_{\cusp}(f_\mu \otimes\one_K\otimes\tau)-J^T(f_\mu\otimes\one_K\otimes\tau)}\le\norm{\tau}_1 \nu_{\noncusp}^{K,T}(\mxml(\sph f_\mu))
\]
for all $\tau \in \Hecke^{\bad}$ and $T\in\aaa_0^{\pls}$. Moreover, by \eqref{EqnGeometricSpectral} we have
\[
J^T(f_\mu\otimes\one_K\otimes\tau) = \mathbb{J}^T(f_\mu\otimes\one_K\otimes\tau)
\]
for $\regd(T) \gg_{f,K} 1 + \maxsupp (\tau)$. Using the upper bound
of \eqref{UpperboundNuNoncusp} for
$\nu_{\noncusp}^{K,T}(\mxml(\sph f_\mu))$ and the estimate of Corollary \ref{cor: finmatz}
for $\mathbb{J}^T(f_\mu\otimes\one_K\otimes\tau)$
for a suitable value of $T$, one obtains \eqref{RcuspwithHecke}.

Denote by $R_{\cusp,\temp}$ the restriction of $R_{\cusp}$ to the space of
cuspidal representations tempered at infinity.
As in the proof of Corollary \ref{cor: lclbnd}, one obtains
\[
\abs{\tr R_{\cusp} (f_{\mu} \otimes\one_K\otimes\tau) -
\tr R_{\cusp,\temp} (f_{\mu} \otimes\one_K\otimes\tau)}\ll_{f,K}
\norm{\tau}_1 (1+\norm{\mu})^{\dm-\rk-1},
\]
and if in addition $\int_{\iii\aaa_0^*}\sph f(\lambda)\ d\lambda=1$, then by Corollary \ref{cor: appid}
\[
\abs{\tr R_{\cusp,\temp} (f_{D} \otimes\one_K\otimes\tau) - \mult_{\cusp}^K(D, \tau)}\ll_{f,K} \norm{\tau}_1 \vol(\bdry_1D)(1+\norm{D})^{\dm-\rk}.
\]
From these estimates one can deduce the theorem exactly as in the proof of
Theorem \ref{TheoremWeyl}.
\end{proof}


\def\cprime{$'$}
\begin{bibdiv}
\begin{biblist}

\bib{MR558260}{article}{
      author={Arthur, James},
       title={A trace formula for reductive groups. {II}. {A}pplications of a
  truncation operator},
        date={1980},
        ISSN={0010-437X},
     journal={Compositio Math.},
      volume={40},
      number={1},
       pages={87\ndash 121},
         url={http://www.numdam.org/item?id=CM_1980__40_1_87_0},
      review={\MR{558260 (81b:22018)}},
}

\bib{MR681737}{article}{
      author={Arthur, James},
       title={On a family of distributions obtained from {E}isenstein series.
  {I}. {A}pplication of the {P}aley-{W}iener theorem},
        date={1982},
        ISSN={0002-9327},
     journal={Amer. J. Math.},
      volume={104},
      number={6},
       pages={1243\ndash 1288},
         url={http://dx.doi.org/10.2307/2374061},
      review={\MR{681737 (85k:22044)}},
}

\bib{MR2192011}{incollection}{
      author={Arthur, James},
       title={An introduction to the trace formula},
        date={2005},
   booktitle={Harmonic analysis, the trace formula, and {S}himura varieties},
      series={Clay Math. Proc.},
      volume={4},
   publisher={Amer. Math. Soc., Providence, RI},
       pages={1\ndash 263},
      review={\MR{2192011 (2007d:11058)}},
}

\bib{MR518111}{article}{
      author={Arthur, James~G.},
       title={A trace formula for reductive groups. {I}. {T}erms associated to
  classes in {$G({\bf Q})$}},
        date={1978},
        ISSN={0012-7094},
     journal={Duke Math. J.},
      volume={45},
      number={4},
       pages={911\ndash 952},
         url={http://projecteuclid.org/euclid.dmj/1077313104},
      review={\MR{518111 (80d:10043)}},
}

\bib{MR0240238}{book}{
      author={Bourbaki, N.},
       title={\'{E}l\'ements de math\'ematique. {F}asc. {XXXIV}. {G}roupes et
  alg\`ebres de {L}ie. {C}hapitre {IV}: {G}roupes de {C}oxeter et syst\`emes de
  {T}its. {C}hapitre {V}: {G}roupes engendr\'es par des r\'eflexions.
  {C}hapitre {VI}: syst\`emes de racines},
      series={Actualit\'es Scientifiques et Industrielles, No. 1337},
   publisher={Hermann},
     address={Paris},
        date={1968},
      review={\MR{MR0240238 (39 \#1590)}},
}

\bib{MR2200366}{book}{
      author={Cojocaru, Alina~Carmen},
      author={Murty, M.~Ram},
       title={An introduction to sieve methods and their applications},
      series={London Mathematical Society Student Texts},
   publisher={Cambridge University Press, Cambridge},
        date={2006},
      volume={66},
        ISBN={978-0-521-64275-3; 0-521-61275-6},
      review={\MR{2200366 (2006k:11184)}},
}

\bib{MR664496}{article}{
      author={Donnelly, Harold},
       title={On the cuspidal spectrum for finite volume symmetric spaces},
        date={1982},
        ISSN={0022-040X},
     journal={J. Differential Geom.},
      volume={17},
      number={2},
       pages={239\ndash 253},
         url={http://projecteuclid.org/getRecord?id=euclid.jdg/1214436921},
      review={\MR{664496 (83m:58079)}},
}

\bib{MR532745}{article}{
      author={Duistermaat, J.~J.},
      author={Kolk, J. A.~C.},
      author={Varadarajan, V.~S.},
       title={Spectra of compact locally symmetric manifolds of negative
  curvature},
        date={1979},
        ISSN={0020-9910},
     journal={Invent. Math.},
      volume={52},
      number={1},
       pages={27\ndash 93},
         url={http://dx.doi.org/10.1007/BF01389856},
      review={\MR{532745 (82a:58050a)}},
}

\bib{MR3711830}{article}{
      author={Finis, T.},
      author={Lapid, E.},
       title={On the analytic properties of intertwining operators {I}: global
  normalizing factors},
        date={2017},
        ISSN={1017-060X},
     journal={Bull. Iranian Math. Soc.},
      volume={43},
      number={4},
       pages={235\ndash 277},
      review={\MR{3711830}},
}

\bib{MR2801400}{article}{
      author={Finis, Tobias},
      author={Lapid, Erez},
       title={On the continuity of {A}rthur's trace formula: the semisimple
  terms},
        date={2011},
        ISSN={0010-437X},
     journal={Compos. Math.},
      volume={147},
      number={3},
       pages={784\ndash 802},
         url={http://dx.doi.org/10.1112/S0010437X11004891},
      review={\MR{2801400}},
}

\bib{MR3534542}{article}{
      author={Finis, Tobias},
      author={Lapid, Erez},
       title={On the {C}ontinuity of the {G}eometric {S}ide of the {T}race
  {F}ormula},
        date={2016},
        ISSN={0251-4184},
     journal={Acta Math. Vietnam.},
      volume={41},
      number={3},
       pages={425\ndash 455},
         url={http://dx.doi.org/10.1007/s40306-016-0176-x},
      review={\MR{3534542}},
}

\bib{MR2811597}{article}{
      author={Finis, Tobias},
      author={Lapid, Erez},
      author={M{\"u}ller, Werner},
       title={On the spectral side of {A}rthur's trace formula---absolute
  convergence},
        date={2011},
        ISSN={0003-486X},
     journal={Ann. of Math. (2)},
      volume={174},
      number={1},
       pages={173\ndash 195},
         url={http://dx.doi.org/10.4007/annals.2011.174.1.5},
      review={\MR{2811597}},
}

\bib{1905.09078}{misc}{
      author={Finis, Tobias},
      author={Matz, Jasmin},
       title={On the asymptotics of {H}ecke operators for reductive groups},
        date={2019},
        note={arXiv:1905.09078},
}

\bib{MR954385}{book}{
      author={Gangolli, Ramesh},
      author={Varadarajan, V.~S.},
       title={Harmonic analysis of spherical functions on real reductive
  groups},
      series={Ergebnisse der Mathematik und ihrer Grenzgebiete [Results in
  Mathematics and Related Areas]},
   publisher={Springer-Verlag},
     address={Berlin},
        date={1988},
      volume={101},
        ISBN={3-540-18302-7},
      review={\MR{MR954385 (89m:22015)}},
}

\bib{MR1696481}{incollection}{
      author={Gross, Benedict~H.},
       title={On the {S}atake isomorphism},
        date={1998},
   booktitle={Galois representations in arithmetic algebraic geometry
  ({D}urham, 1996)},
      series={London Math. Soc. Lecture Note Ser.},
      volume={254},
   publisher={Cambridge Univ. Press},
     address={Cambridge},
       pages={223\ndash 237},
         url={http://dx.doi.org/10.1017/CBO9780511662010.006},
      review={\MR{1696481 (2000e:22008)}},
}

\bib{MR2642451}{article}{
      author={Haines, Thomas~J.},
      author={Kottwitz, Robert~E.},
      author={Prasad, Amritanshu},
       title={Iwahori-{H}ecke algebras},
        date={2010},
        ISSN={0970-1249},
     journal={J. Ramanujan Math. Soc.},
      volume={25},
      number={2},
       pages={113\ndash 145},
      review={\MR{2642451}},
}

\bib{MR1790156}{book}{
      author={Helgason, Sigurdur},
       title={Groups and geometric analysis},
      series={Mathematical Surveys and Monographs},
   publisher={American Mathematical Society, Providence, RI},
        date={2000},
      volume={83},
        ISBN={0-8218-2673-5},
         url={https://doi.org/10.1090/surv/083},
        note={Integral geometry, invariant differential operators, and
  spherical functions, Corrected reprint of the 1984 original},
      review={\MR{1790156}},
}

\bib{MR0609014}{article}{
      author={H{\"o}rmander, Lars},
       title={The spectral function of an elliptic operator},
        date={1968},
        ISSN={0001-5962},
     journal={Acta Math.},
      volume={121},
       pages={193\ndash 218},
      review={\MR{MR0609014 (58 \#29418)}},
}

\bib{MR1470422}{article}{
      author={Ji, Lizhen},
       title={The trace class conjecture and the {W}eyl upper bound on the
  discrete spectrum of arithmetic groups},
        date={1997},
        ISSN={1073-2780},
     journal={Math. Res. Lett.},
      volume={4},
      number={4},
       pages={501\ndash 511},
      review={\MR{MR1470422 (98i:11037)}},
}

\bib{MR1622604}{article}{
      author={Ji, Lizhen},
       title={The trace class conjecture for arithmetic groups},
        date={1998},
        ISSN={0022-040X},
     journal={J. Differential Geom.},
      volume={48},
      number={1},
       pages={165\ndash 203},
      review={\MR{MR1622604 (99b:11057)}},
}

\bib{MR0419366}{book}{
      author={Langlands, Robert~P.},
       title={Euler products},
   publisher={Yale University Press},
     address={New Haven, Conn.},
        date={1971},
        note={A James K. Whittemore Lecture in Mathematics given at Yale
  University, 1967, Yale Mathematical Monographs, 1},
      review={\MR{0419366 (54 \#7387)}},
}

\bib{MR0579181}{book}{
      author={Langlands, Robert~P.},
       title={On the functional equations satisfied by {E}isenstein series},
      series={Lecture Notes in Mathematics, Vol. 544},
   publisher={Springer-Verlag},
     address={Berlin},
        date={1976},
      review={\MR{0579181 (58 \#28319)}},
}

\bib{MR2541128}{article}{
      author={Lapid, Erez},
      author={M{\"u}ller, Werner},
       title={Spectral asymptotics for arithmetic quotients of {${\rm
  SL}(n,\Bbb R)/{\rm SO}(n)$}},
        date={2009},
        ISSN={0012-7094},
     journal={Duke Math. J.},
      volume={149},
      number={1},
       pages={117\ndash 155},
         url={http://dx.doi.org/10.1215/00127094-2009-037},
      review={\MR{2541128}},
}

\bib{MR2402686}{incollection}{
      author={Lapid, Erez~M.},
       title={A remark on {E}isenstein series},
        date={2008},
   booktitle={Eisenstein series and applications},
      series={Progr. Math.},
      volume={258},
   publisher={Birkh\"auser Boston, Boston, MA},
       pages={239\ndash 249},
         url={http://dx.doi.org/10.1007/978-0-8176-4639-4_8},
      review={\MR{2402686}},
}

\bib{MR2306657}{article}{
      author={Lindenstrauss, Elon},
      author={Venkatesh, Akshay},
       title={Existence and {W}eyl's law for spherical cusp forms},
        date={2007},
        ISSN={1016-443X},
     journal={Geom. Funct. Anal.},
      volume={17},
      number={1},
       pages={220\ndash 251},
         url={http://dx.doi.org/10.1007/s00039-006-0589-0},
      review={\MR{2306657 (2008c:22016)}},
}

\bib{MR0435301}{book}{
      author={Macdonald, I.~G.},
       title={Spherical functions on a group of {$p$}-adic type},
   publisher={Ramanujan Institute, Centre for Advanced Study in
  Mathematics,University of Madras, Madras},
        date={1971},
        note={Publications of the Ramanujan Institute, No. 2},
      review={\MR{0435301}},
}

\bib{1505.07285}{misc}{
      author={Matz, Jasmin},
      author={Templier, Nicolas},
       title={{S}ato-{T}ate equidistribution for families of {H}ecke-{M}aass
  forms on {$SL(n,R)/SO(n)$}},
        date={2015},
        note={arXiv:1505.07285},
}

\bib{MR1823867}{article}{
      author={Miller, Stephen~D.},
       title={On the existence and temperedness of cusp forms for {${\rm
  SL}_3({\Bbb Z})$}},
        date={2001},
        ISSN={0075-4102},
     journal={J. Reine Angew. Math.},
      volume={533},
       pages={127\ndash 169},
         url={http://dx.doi.org/10.1515/crll.2001.029},
      review={\MR{1823867 (2002b:11070)}},
}

\bib{MR1361168}{book}{
      author={M\oe~glin, C.},
      author={Waldspurger, J.-L.},
       title={Spectral decomposition and {E}isenstein series},
      series={Cambridge Tracts in Mathematics},
   publisher={Cambridge University Press, Cambridge},
        date={1995},
      volume={113},
        ISBN={0-521-41893-3},
         url={https://doi.org/10.1017/CBO9780511470905},
        note={Une paraphrase de l'\'{E}criture [A paraphrase of Scripture]},
      review={\MR{1361168}},
}

\bib{MR1025165}{article}{
      author={M\"uller, Werner},
       title={The trace class conjecture in the theory of automorphic forms},
        date={1989},
        ISSN={0003-486X},
     journal={Ann. of Math. (2)},
      volume={130},
      number={3},
       pages={473\ndash 529},
         url={https://doi.org/10.2307/1971453},
      review={\MR{1025165}},
}

\bib{MR2276771}{article}{
      author={M\"uller, Werner},
       title={Weyl's law for the cuspidal spectrum of {${\rm SL}_n$}},
        date={2007},
        ISSN={0003-486X},
     journal={Ann. of Math. (2)},
      volume={165},
      number={1},
       pages={275\ndash 333},
         url={https://doi.org/10.4007/annals.2007.165.275},
      review={\MR{2276771}},
}

\bib{MR1204788}{article}{
      author={Reznikov, Andrei},
       title={Eisenstein matrix and existence of cusp forms in rank one
  symmetric spaces},
        date={1993},
        ISSN={1016-443X},
     journal={Geom. Funct. Anal.},
      volume={3},
      number={1},
       pages={79\ndash 105},
         url={http://dx.doi.org/10.1007/BF01895514},
      review={\MR{1204788}},
}

\bib{MR1159118}{incollection}{
      author={Sarnak, P.},
       title={On cusp forms. {II}},
        date={1990},
   booktitle={Festschrift in honor of {I}. {I}. {P}iatetski-{S}hapiro on the
  occasion of his sixtieth birthday, {P}art {II} ({R}amat {A}viv, 1989)},
      series={Israel Math. Conf. Proc.},
      volume={3},
   publisher={Weizmann},
     address={Jerusalem},
       pages={237\ndash 250},
      review={\MR{1159118 (93e:11068)}},
}

\bib{MR1018385}{incollection}{
      author={Sarnak, Peter},
       title={Statistical properties of eigenvalues of the {H}ecke operators},
        date={1987},
   booktitle={Analytic number theory and {D}iophantine problems ({S}tillwater,
  {OK}, 1984)},
      series={Progr. Math.},
      volume={70},
   publisher={Birkh\"auser Boston},
     address={Boston, MA},
       pages={321\ndash 331},
      review={\MR{1018385 (90k:11056)}},
}

\bib{MR1997348}{article}{
      author={Sarnak, Peter},
       title={Spectra of hyperbolic surfaces},
        date={2003},
        ISSN={0273-0979},
     journal={Bull. Amer. Math. Soc. (N.S.)},
      volume={40},
      number={4},
       pages={441\ndash 478 (electronic)},
         url={http://dx.doi.org/10.1090/S0273-0979-03-00991-1},
      review={\MR{1997348 (2004f:11107)}},
}

\bib{MR3675175}{incollection}{
      author={Sarnak, Peter},
      author={Shin, Sug~Woo},
      author={Templier, Nicolas},
       title={Families of {$L$}-functions and their symmetry},
        date={2016},
   booktitle={Families of automorphic forms and the trace formula},
      series={Simons Symp.},
   publisher={Springer, [Cham]},
       pages={531\ndash 578},
      review={\MR{3675175}},
}

\bib{MR1117906}{book}{
      author={Selberg, Atle},
       title={Collected papers. {V}ol. {I}},
   publisher={Springer-Verlag},
     address={Berlin},
        date={1989},
        ISBN={3-540-18389-2},
        note={With a foreword by K. Chandrasekharan},
      review={\MR{MR1117906 (92h:01083)}},
}

\bib{MR1159119}{incollection}{
      author={Selberg, Atle},
       title={Remarks on the distribution of poles of {E}isenstein series},
        date={1990},
   booktitle={Festschrift in honor of {I}. {I}. {P}iatetski-{S}hapiro on the
  occasion of his sixtieth birthday, {P}art {II} ({R}amat {A}viv, 1989)},
      series={Israel Math. Conf. Proc.},
      volume={3},
   publisher={Weizmann},
     address={Jerusalem},
       pages={251\ndash 278},
      review={\MR{MR1159119 (93c:11035)}},
}

\bib{MR3437869}{article}{
      author={Shin, Sug~Woo},
      author={Templier, Nicolas},
       title={Sato-{T}ate theorem for families and low-lying zeros of
  automorphic {$L$}-functions},
        date={2016},
        ISSN={0020-9910},
     journal={Invent. Math.},
      volume={203},
      number={1},
       pages={1\ndash 177},
         url={http://dx.doi.org/10.1007/s00222-015-0583-y},
        note={Appendix A by Robert Kottwitz, and Appendix B by Raf Cluckers,
  Julia Gordon and Immanuel Halupczok},
      review={\MR{3437869}},
}

\bib{MR733320}{incollection}{
      author={Wallach, N.~R.},
       title={On the constant term of a square integrable automorphic form},
        date={1984},
   booktitle={Operator algebras and group representations, {V}ol. {II}
  ({N}eptun, 1980)},
      series={Monogr. Stud. Math.},
      volume={18},
   publisher={Pitman},
     address={Boston, MA},
       pages={227\ndash 237},
      review={\MR{733320 (86i:22029)}},
}

\end{biblist}
\end{bibdiv}

\end{document}